\newtheorem{definition}{Definition}[section]
\newtheorem{theorem}[definition]{Theorem}
\newtheorem{lemma}[definition]{Lemma}
\newtheorem{corollary}[definition]{Corollary}
\newtheorem{proposition}[definition]{Proposition}
\theoremstyle{definition}
\newcommand\style{\mathcal }          %%% calligraphic
\newcommand{\B}{\style{B}}
\newcommand{\M}{\style{M}}
\newcommand\A{{\style A}}
\renewcommand{\H}{\style{H}}
\newcommand{\K}{\style K}
\newcommand\fn{{\mathbb F}_n}         %%% free group
\newcommand\ftwo{{\mathbb F}_2}
\newcommand{\lgG}{\rm{G}}        %%%% Lie group G
\newcommand\osi{{\style I}}
\newcommand\osq{{\style Q}}
\newcommand\osr{{\style R}}
\newcommand\oss{{\style S}}
\newcommand\ost{{\style T}}
\newcommand\omin{\otimes_{\rm min}}
\newcommand\cstar{{\rm C}^*}                              %%% C$^*$-algebra generated by
\newcommand\cstare{{\rm C}_{\rm e}^*}              %%% C$^*$-envelope of
\def\mytimeA#1%format #1 (usually \the\time) using a.m. & p.m.
           \def\tapm{ a.m.}%
           \def\tapm{ p.m.}%
\title[Purity of Embeddings of Operator Systems]{Purity of the Embeddings of Operator Systems into their C$^*$- and Injective Envelopes}
\author{Douglas Farenick}
\author{Ryan Tessier}
\address{Department of Mathematics and Statistics, University of Regina,
Regina, Saskatchewan S4S 0A2, Canada}
\subjclass[2010]{Primary 46L07, 47L05, 47L07; Secondary 46A32, 46A55, 47L25}
\keywords{pure completely positive linear map, operator system, injective envelope, prime C$^*$-algebra, AW$^*$-factor, C$^*$-envelope}
\thanks{This work is supported in part by NSERC (Canada).}
\begin{document}
 
\begin{abstract} We study the issue of issue of purity (as a completely positive linear map)
for identity maps on operators systems and for their completely isometric embeddings into their
C$^*$-envelopes and injective envelopes. Our most general result states that
the canonical embedding of an operator system $\osr$ into its injective envelope
$\mbox{\rm I}(\osr)$ is pure if and only if the C$^*$-envelope $\cstare(\osr)$ of $\osr$ is a prime C$^*$-algebra. To prove this, we also
show that the identity map on any AW$^*$-factor is a pure completely positive linear map.

For embeddings of operator systems $\osr$ into their C$^*$-envelopes, the issue of purity is seemingly harder to describe 
in full generality, and so we focus here on operator systems arising from the generators of discrete groups. Two such 
operator systems of interest are denoted by $\oss_n$ and $\mbox{\rm NC}(n)$. The former corresponds to the generators of the free group
$\fn$, while the latter corresponds to the generators of the group $\mathbb Z_2*\cdots*\mathbb Z_2$, the free product of $n$ copies of $\mathbb Z_2$.
The operator systems $\oss_n$ and $\mbox{\rm NC}(n)$ are of interest in operator theory for their connections to the weak expectation property and C$^*$-nuclearity,
and for their universal properties. Specifically,
$\oss_n$ is the universal operator system for arbitrary $n$-tuples of contractions acting on a Hilbert space and
$\mbox{\rm NC}(n)$ is the universal operator system for $n$ selfadjoint contractions. We show that the embedding
of $\oss_n$ into $\cstare(\oss)$ is pure for all $n\ge2$ and the embedding of
$\mbox{\rm NC}(n)$ into $\cstare\left(\mbox{\rm NC}(n)\right)$
is pure for every $n\ge3$. 

The question of purity of the identity is quite subtle for operator system that are not C$^*$-algebras, and we have results only for the
operator systems $\oss_n$ and $\mbox{\rm NC}(n)$.

Lastly, a previously unrecorded feature of pure completely positive linear maps is presented: every pure
completely positive linear map on an operator system $\osr$ into an injective von Neumann algebra $\mathcal M$ has a pure
completely positive extension to any operator system $\ost$ that contains $\osr$ as an operator subsystem, thereby generalising 
a result of Arveson for the injective type I factor $\B(\H)$.
\end{abstract}

\maketitle

%%%%%%%%%%%%%%%%%%%
\section{Introduction}

A face $\mathcal F$ in a proper convex cone $\mathcal C$ is a half-line face \cite[p.~182]{Rockafellar-book}
if there exists an element
$\phi\in\mathcal C$ such that
\[
\mathcal F= \left\{t\phi\,|\,t\in\mathbb R,\,t\geq0\right\}.
\]
Half-line faces of convex cones are the analogues, for cones, of the notion of extreme points of convex sets. Under 
topological conditions such as closedness, convex combinations of elements taken from various 
half-line faces of a convex cone $\mathcal C$ completely determine the cone $\mathcal C$ \cite[Theorem 18.5]{Rockafellar-book}. 
In particular, any generator $\phi$ of a half-line face of a convex cone $\mathcal C$
is an extreme point of any convex subset $\mathcal E\subseteq\mathcal C$ that contains $\phi$.

The purpose of this paper is to study such faces in the case where the cone $\mathcal C$
is the cone $\mathcal C\mathcal P(\osr,\ost)$
of completely positive linear maps from an operator system $\osr$ into an operator system $\ost$. The generators $\phi$
of such half-line faces of $\mathcal C\mathcal P(\osr,\ost)$ are said to be \emph{pure completely positive linear maps}.
Expressed differently,
a completely positive linear map $\phi:\osr\rightarrow\ost$ between operator systems $\osr$ and $\ost$ is pure
in the cone $\mathcal C\mathcal P(\osr,\ost)$
if, for any completely positive linear maps $\vartheta,\omega:\osr\rightarrow\ost$
such that $\vartheta+\omega=\phi$, there necessarily exists a scalar $s\in[0,1]$ such that $\vartheta=s\phi$ and $\omega=(1-s)\phi$.

Henceforth, $\mathcal C\mathcal P(\osr,\ost)$ will be denoted by $\mathcal C\mathcal P(\osr)$ when the domain $\osr$
and co-domain $\ost$ are equal
(as operator systems).

If $\osr$ is a unital C$^*$-algebra $\A$ and if $\ost=\B(\H)$, the von Neumann algebra of bounded linear operators on a
Hilbert space $\H$, 
then there is a very satisfactory and readily applicable criterion discovered by
Arveson: namely, $\phi$ is a pure element of 
$\mathcal C\mathcal P\left(\A,\B(\H)\right)$ if and only if, for any minimal Stinespring decomposition 
 $\phi=w^*\pi w$ of $\phi$, the representation $\pi$ is irreducible \cite[Corollary 1.4.3]{arveson1969}.
Replacing the C$^*$-algebra $\A$ by an operator system $\osr$ is somewhat more problematic, 
but there is, nevertheless, a geometric criterion for a unital completely positive (ucp) linear map $\phi:\osr\rightarrow\M_n(\mathbb C)$,
where $\M_n(\mathbb C)$ is the C$^*$-algebra of $n\times n$ complex matrices, to be a pure element of 
$\mathcal C\mathcal P\left(\osr,\M_n(\mathbb C)\right)$: namely, 
$\phi$ is pure if and only if $\phi$ is a matrix extreme point in the 
compact free convex set $\mbox{\sf S}(\osr)$ of all matrix-valued ucp maps on $\osr$ \cite{farenick2000}. Consequently,
the results of \cite{arveson1969} and \cite{farenick2000} indicate that
arbitrary elements of $\mathcal C\mathcal P\left(\A,\B(\H)\right)$ and
$\mathcal C\mathcal P\left(\osr,\M_n(\mathbb C)\right)$
can be viewed as operator (or matrix) convex combinations of pure completely positive linear maps. In this sense, then,
pure completely positive linear maps determine all completely positive linear maps.

The most general case occurs when both $\osr$ and $\ost$ are arbitrary operator systems; however, in such cases 
the absence of any structure theory (such as a Stinespring decomposition) makes the determination of pure elements
of $\mathcal C\mathcal P(\osr,\ost)$ very difficult.

Arveson's criterion demonstrates that the identity map $\iota:\B(\H)\rightarrow\B(\H)$ is a pure element of
$\mathcal C\mathcal P\left(\B(\H)\right)$. It is, therefore, natural to ask: for which operator systems $\osr$ is
the identity map $\iota:\osr\rightarrow\osr$ a pure element of the cone $\mathcal C\mathcal P\left(\osr\right)$?
A closely related question involves embeddings: if $\osr\subset\ost$ is an inclusion of operator systems, then is the
canonical inclusion map $\tilde\iota:\osr\rightarrow\ost$ a pure element of 
$\mathcal C\mathcal P\left(\osr,\ost\right)$? In particular, for this second question, is the inclusion of an
operator system $\osr$ into its C$^*$-envelope $\cstare(\osr)$ pure in the cone 
$\mathcal C\mathcal P\left(\osr,\cstare(\osr)\right)$?

Thus, in this paper we are concerned with the following two questions. 
\begin{enumerate}
\item[{(Q1)}] For which operators systems $\osr$ is the identity map $\iota:\osr\rightarrow\osr$ pure in the cone $\mathcal C\mathcal P\left(\osr \right)$?
\item[{(Q2)}] For which operators systems $\osr$ is the embedding $\iota_e:\osr\rightarrow\cstare(\osr)$ pure in the cone $\mathcal C\mathcal P\left(\osr,\cstare(\osr)\right)$?
\end{enumerate}

An operator system $\osi$ is  injective if, for every operator system $\osr$ and every operator system $\ost$ containing $\osr$ as an operator subsystem,
each completely positive linear map $\phi:\osr\rightarrow\osi$ has an extension to a completely positive linear map 
$\Phi:\ost\rightarrow\osi$.
Arveson's Hahn-Banach Extension Theorem \cite[Theorem 1.2.3]{arveson1969} states
that $\B(\H)$ is an injective operator system for every Hilbert space $\H$. 
Hamana \cite{hamana1979b} provided crucial additional information about inclusions of operator systems into injective operator systems, showing that every operator system $\osr$ is an operator subsystem of some minimal injective operator system $\mbox{\rm I}(\osr)$,
which is called the injective envelope of $\osr$. 
If we denote the canonical inclusion of $\osr$ into $\mbox{\rm I}(\osr)$ by $\iota_{\rm ie}$,
then this map is a unital completely positive order embedding of the operator system $\osr$ into the operator system
$\mbox{\rm I}(\osr)$. The final question addressed in this paper is:

\begin{enumerate}
\item[{(Q3)}] For which operator systems $\osr$
is the embedding $\iota_{\rm ie}:\osr\rightarrow\mbox{\rm I}(\osr)$ pure in the cone $\mathcal C\mathcal P\left(\osr,\mbox{\rm I}(\osr)\right)$?
\end{enumerate}

In this paper we provide a complete answer to question (Q3); we also answer question
(Q2) for operator systems that arise from generators of
discrete groups. Question (Q1), however, is difficult to answer in a generic manner. 
Therefore, we only address question (Q1) for two classes of finite-dimensional
operator systems, denoted by $\oss_n$ and $\mbox{\rm NC}(n)$. These two classes of operator systems
are relevant for their universal properties, for their encoding of quantum correlations, and
for their role in operator-algebraic questions concerning
the weak expectation property and C$^*$-nuclearity 
\cite{farenick--kavruk--paulsen--todorov2014,farenick--kavruk--paulsen--todorov2018,farenick--paulsen2012,fritz2012,kavruk2014}.

%%%%%%%%%%%%%%%%%%%%%
\section{Operator Systems and Pure Completely Positive Linear Maps}

Our general reference for operator system theory are the books of Paulsen \cite{Paulsen-book} and Effros and Ruan \cite{Effros--Ruan-book}.  

%%%%%%%%%
\subsection{Operator systems and their C$^*$- and injective envelopes} If $\osr$ is a complex $*$-vector space, then the space $\M_n(\osr)$ of
$n\times n$ matrices over $\osr$ is also a complex $*$-vector space in which the adjoint of an
$n\times n$ matrix $[x_{ij}]_{i,j=1}^n$ of elements $x_{ij}\in\osr$ is defined by $([x_{ij}]_{i,j=1}^n)^*=[x_{ji}^*]_{i,j=1}^n$.
A \emph{matrix ordering} of a complex $*$-vector space $\osr$ is a family $\left\{\mathcal C_n\right\}_{n\in\mathbb N}$ of subsets 
$\mathcal C_n$ of the real vector spaces 
$\left(\M_n(\osr)\right)_{\rm sa}$ of selfadjoint matrices over $\osr$ such that, for all $n,m\in\mathbb N$, 
(i) $\mathcal C_n$ is a convex cone, (ii) $\mathcal C_n\cap\left(-\mathcal C_n\right)=\{0\}$, and 
(iii) $\alpha^*x\alpha\in\mathcal C_m$ for all $x\in\mathcal C_n$ and all complex $n\times m$ matrices $\alpha$.

An element $e_\osr\in\mathbb C_1$ is an \emph{Archimedean order unit} for a matrix ordering $\left\{\mathcal C_n\right\}_{n\in\mathbb N}$ of a complex $*$-vector space $\osr$
if, for every $n\in\mathbb N$, the condition $e_\osr^{[n]}+\varepsilon Q\in\mathcal C_n$ holds for every real $\varepsilon>0$ only for $Q\in\mathcal C_n$. Here,
$e_\osr^{[n]}=e_\osr\oplus\cdots\oplus e_\osr$, the $n$-fold direct sum of $e_\osr$. Note that if 
there exists an Archimedean order unit for a matrix ordering of $\osr$, then there are infinitely
many choices for this order unit. 

Formally, an \emph{operator system} is a triple $\left(\osr, \left\{\mathcal C_n\right\}_{n\in\mathbb N}, e_\osr\right)$ consisting of a complex 
$*$-vector space $\osr$, a matrix ordering $\left\{\mathcal C_n\right\}_{n\in\mathbb N}$ of $\osr$, and a distinguished element $e_\osr$ that serves
as an Archimedean order unit for the matrix ordering $\left\{\mathcal C_n\right\}_{n\in\mathbb N}$ of $\osr$.
Unless it is necessary to make explicit reference to the matrix ordering $ \left\{\mathcal C_n\right\}_{n\in\mathbb N}$ and/or the order unit $e_\osr$,
the triple $\left(\osr, \left\{\mathcal C_n\right\}_{n\in\mathbb N}, e_\osr\right)$ will be denoted simply by $\osr$. The matrix cones $\mathcal C_n$
of an operator system $\osr$ are generally denoted by $\M_n(\osr)_+$.

Every unital C$^*$-algebra $\A$ 
is an operator system, where the matrix cones are given by $\mathcal C_n=\M_n(\A)_+$, the 
cone positive elements
of the C$^*$-algebra $\M_n(\A)$ and the Archimedean order unit is the multiplicative identity $1$ of $\A$, 
which is the canonical choice of Archimedean order unit for this
matrix ordering of $\A$.

If $\osr$ and $\ost$ are operator systems, then a linear map $\phi:\osr\rightarrow\ost$ is completely positive if
$\phi^{(n)}:\M_n(\osr)\rightarrow\M_n(\ost)$ maps $\M_n(\osr)_+$ into $\M_n(\ost)_+$, for every $n\in\mathbb N$, where
$\phi^{(n)}$ is defined by 
$\phi^{(n)}\left(\left[x_{ij}\right]_{i,j=1}^n\right)=\left[\phi(x_{ij})\right ]_{i,j=1}^n$. If, in addition, $\phi(e_\osr)=e_\ost$, then
$\phi$ is said to be a unital completely positive linear map, or a ucp map.
A linear isomorphism $\phi:\osr\rightarrow\ost$ in which both $\phi$ and $\phi^{-1}$ are completely positive is called a 
\emph{complete order isomorphism}. An one-to-one completely positive
linear map $\phi:\osr\rightarrow\ost$ in which $\phi(\osr)$ is an operator subsystem
of $\ost$ is called 
a \emph{complete order embedding} of $\osr$ into $\ost$ if 
 $\phi$ is a complete order isomorphism when considered as a map of the operator system
 $\osr$ onto the operator system $\phi(\osr)$.
 
The notation $\osr\simeq\ost$ indicates the existence of a unital complete order isomorphism
$\phi:\osr\rightarrow\ost$, although we shall also have need of complete order isomorphisms that are not unital.

%Every operator system $\osr$ has the property that $\osr$ and each
%matrix space $\M_n(\osr)$ is a normed vector space by way of the norm
%\[
%\left\| X\right\| = \inf\left\{\lambda\in\mathbb R\,|\,\left[\begin{array}{cc}\lambda e_\osr^{[n]}&X\\X^*& \lambda e_\osr^{[n]}
%\end{array}\right]\in \M_{2n}(\osr)_+\right\},
%\]
%where $X\in \M_n(\osr)$ \cite[p.~179]{choi--effros1977}. 
%There is an important relationship between order and norm: a unital completely positive linear $\phi:\osr\rightarrow\ost$
%is a complete order embedding if and only if $\phi$ is a complete isometry. 

The Choi--Effros Embedding Theorem \cite{choi--effros1977} states that every operator system $\osr$ is unitally 
completely order isomorphic to an
operator subsystem of $\B(\H)$ for some Hilbert space $\H$. Therefore, every operator system $\osr$ is capable of generating
a C$^*$-algebra. Of all such possibilities, our interest is with the minimal one, which is called the
C$^*$-envelope of $\osr$ and whose existence was first established by Hamana \cite{hamana1979b}.

\begin{definition} A \emph{C$^*$-envelope} of an operator system $\osr$ is a pair $(\A, \iota_e)$ consisting of
\begin{enumerate}
\item a unital C$^*$-algebra $\A$, and 
\item a unital complete order embedding $\iota_e:\osr\rightarrow\A$ such that $\iota_e(\osr)$ generates the C$^*$-algebra $\A$  
\end{enumerate}
such that, for every unital complete order embedding $\kappa:\osr\rightarrow\B$ of $\osr$ into a unital C$^*$-algebra $\B$ for which $\kappa(\osr)$ generates $\B$,
there exists a unital $*$-homomorphism $\pi:\B\rightarrow\A$ with $\pi\circ\kappa=\iota_e$.
\end{definition}

\begin{theorem}\label{hamana}{\rm (Hamana, \cite{hamana1979b})} Every operator system $\osr$ admits a C$^*$-envelope $(\A,\iota_e)$. Furthermore, 
if $(\tilde\A, \tilde\iota)$ is any other C$^*$-envelope of $\osr$, then there exists a unital $*$-isomorphism $\varrho:\tilde\A\rightarrow\A$ such that
$\varrho\circ\tilde\iota=\iota_e$.
\end{theorem}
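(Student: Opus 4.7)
The plan is to follow Hamana's original strategy: realise the C$^*$-envelope as a distinguished C$^*$-subalgebra of the injective envelope $\mathrm{I}(\osr)$, which I first construct by hand. Roughly, existence will rest on two structural facts, the injectivity and rigidity of $\mathrm{I}(\osr)$, and uniqueness will then be a formal consequence of the universal property.

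To build $\mathrm{I}(\osr)$, I embed $\osr$ unitally into some $\B(\H)$ via the Choi--Effros embedding theorem, and consider the family of ucp maps $\phi : \B(\H) \to \B(\H)$ restricting to the identity on $\osr$. Applying Zorn's lemma with a suitable preorder (e.g.\ $\phi \preceq \psi$ when $\phi \circ \psi = \phi = \psi \circ \phi$) yields a minimal such $\phi_0$, which is automatically idempotent. Set $\mathrm{I}(\osr) := \phi_0(\B(\H))$; this contains $\osr$, is injective through $\phi_0$, and minimality of $\phi_0$ supplies the key \emph{rigidity property}, that any ucp self-map of $\mathrm{I}(\osr)$ fixing $\osr$ pointwise must be the identity. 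By the Choi--Effros theorem on injective operator systems, the prescription $a \cdot b := \phi_0(ab)$ (with $ab$ computed in $\B(\H)$) then promotes $\mathrm{I}(\osr)$ to a unital C$^*$-algebra.

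I then define $\A$ to be the norm-closed unital $*$-subalgebra of $\mathrm{I}(\osr)$ generated by $\iota_{\rm ie}(\osr)$, and take $\iota_e$ to be the corresponding inclusion $\osr \hookrightarrow \A$. Conditions (1) and (2) of the definition are immediate. For the universal property, given a unital complete order embedding $\kappa : \osr \to \B$ whose image generates $\B$, I invoke injectivity of $\mathrm{I}(\osr)$ to extend the ucp map $\iota_e \circ \kappa^{-1} : \kappa(\osr) \to \mathrm{I}(\osr)$ to a ucp map $\Phi : \B \to \mathrm{I}(\osr)$, and set $\pi := \Phi$. Uniqueness of the envelope then follows by applying the universal property in both directions to a competing envelope $(\tilde\A, \tilde\iota)$: the resulting unital $*$-homomorphisms $\A \rightleftarrows \tilde\A$ compose, on each side, to ucp maps fixing the embedded copy of $\osr$, which are forced to be identity maps by rigidity.

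The main obstacle is showing that $\Phi$ is actually a $*$-homomorphism. I expect to argue via Choi's multiplicative domain theorem combined with rigidity: the multiplicative domain of $\Phi$ is a unital $*$-subalgebra of $\B$, and one shows it contains $\kappa(\osr)$ by observing that any failure of the Schwarz inequality to be an equality on generators would, after composing with a ucp extension back to $\mathrm{I}(\osr)$ produced by injectivity, yield a nontrivial ucp self-map of $\mathrm{I}(\osr)$ fixing $\osr$ pointwise and contradicting rigidity. Since $\kappa(\osr)$ generates $\B$, the multiplicative domain exhausts $\B$, so $\Phi$ is a $*$-homomorphism whose image lies in $\A$, and $\pi \circ \kappa = \iota_e$ holds by construction.
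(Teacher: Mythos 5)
The paper offers no proof of this statement: it is quoted directly from Hamana's 1979 paper, so the only proof to compare against is the original one, and your reconstruction follows exactly that route --- build $\mbox{\rm I}(\osr)$ from a minimal idempotent $\osr$-map, equip it with the Choi--Effros product, take $\A$ to be the C$^*$-subalgebra generated by $\iota_{\rm ie}(\osr)$, and derive the universal property from injectivity plus rigidity. The skeleton is sound, but two steps are compressed to the point of hiding real work. First, in the Zorn's lemma argument a point-weak$^*$ cluster point of a decreasing chain of idempotent $\osr$-maps is not obviously a lower bound: $\phi_\alpha\circ\phi_\beta$ need not converge to $\phi_\alpha\circ\phi_0$ because $\phi_\alpha$ need not be weak$^*$-continuous, and ``minimal, hence automatically idempotent'' also requires the Ces\`aro-average argument. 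Hamana sidesteps both issues by ordering the associated $\osr$-seminorms $x\mapsto\|\phi(x)\|$ rather than the maps themselves; some such device is needed. Second, your multiplicativity step should be made precise along these lines: injectivity of $\B(\K)\supseteq\B$ yields a ucp map $\Lambda:\mbox{\rm I}(\osr)\rightarrow\B(\K)$ with $\Lambda\circ\iota_{\rm ie}=\kappa$, rigidity forces $\Phi\circ\Lambda=\mathrm{id}$, and then the two Schwarz inequalities $\Lambda(a)^*\Lambda(a)\leq\Lambda(a^*\circ a)$ and $\Phi(b)^*\circ\Phi(b)\leq\Phi(b^*b)$ sandwich to give equality, placing $\kappa(\osr)=\Lambda\left(\iota_{\rm ie}(\osr)\right)$ in the multiplicative domain of $\Phi$; since $\kappa(\osr)$ generates $\B$, the map $\Phi$ restricted to $\B$ is a $*$-homomorphism onto $\A$. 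Finally, the appeal to rigidity in the uniqueness step is unnecessary (though it can be made to work after extending by injectivity): the two composites $\A\rightarrow\tilde\A\rightarrow\A$ and $\tilde\A\rightarrow\A\rightarrow\tilde\A$ are unital $*$-homomorphisms fixing a generating copy of $\osr$, hence are already the identity maps.
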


Because, by Theorem \ref{hamana}, the C$^*$-envelope of an operator system is unique up to isomorphism, we shall use the notation $\cstare(\osr)$ to denote
a (the) C$^*$-envelope of $\osr$ and $\iota_e$ to denote a (the) unital complete order embedding of $\osr$ into $\cstare(\osr)$.

Recall that an operator system $\osi$ is \emph{injective} if, for every operator system $\osr$ and every operator system $\ost$ that contains $\osr$ as an
operator subsystem, every completely positive linear maps $\phi:\osr\rightarrow\osi$ has an extension $\Phi:\ost\rightarrow\osi$ such that $\Phi$ is completely
positive.

\begin{definition} An \emph{injective envelope} of an operator system $\osr$ is a pair $(\osi, \iota_{\rm ie})$ consisting of
\begin{enumerate}
\item an injective operator system $\osi$, and 
\item a unital complete order embedding $\iota_{\rm ie}:\osr\rightarrow\osi$ 
\end{enumerate}
such that, for every inclusion $\iota_{\rm ie}(\osr)\subseteq \osq\subseteq\osi$ as operator subsystems in which $\osq$ is injective, 
then necessarily $\osq=\osi$.
\end{definition}

\begin{theorem}\label{hamana op sys 1}{\rm (Hamana, \cite{hamana1979b})} Every operator system $\osr$ admits an injective $(\osi,\iota_{\rm ie})$. Furthermore,
if $(\tilde\osi, \tilde\iota)$ is any other injective envelope of $\osr$, then there exists a unital complete order isomorphism $\phi:\tilde\osi\rightarrow\osi$ such that
$\phi\circ\tilde\iota=\iota_{\rm ie}$.
\end{theorem}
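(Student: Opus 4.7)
The plan is to follow Hamana's original construction, adapted to operator systems. By the Choi--Effros embedding theorem, realize $\osr$ as an operator subsystem of $\B(\H)$ for some Hilbert space $\H$; since $\B(\H)$ is itself injective by Arveson's extension theorem, the strategy is to cut it down to a minimal injective operator system still containing $\osr$.

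Let $\Omega$ be the set of ucp maps $\phi:\B(\H)\to\B(\H)$ whose restriction to $\osr$ is the inclusion map (the $\osr$-projections, before idempotence is established). Then $\Omega$ is nonempty (it contains $\mathrm{id}_{\B(\H)}$) and is compact in the point-weak-$*$ topology. Introduce the partial order $\phi\preceq\psi$ iff $\phi=\phi\circ\psi=\psi\circ\phi$; using the compactness to handle descending chains, Zorn's lemma yields a minimal element $\phi_0\in\Omega$. A standard convexity argument---taking a cluster point of Ces\`aro averages of iterates of $\phi_0$ and invoking minimality---forces $\phi_0\circ\phi_0=\phi_0$. Setting $\osi:=\phi_0(\B(\H))$ with $\iota_{\rm ie}:\osr\to\osi$ the inclusion, we obtain an operator subsystem of $\B(\H)$ containing $\osr$. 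Injectivity of $\osi$ follows from that of $\B(\H)$: for any ucp map $\psi:\ost\to\osi\subseteq\B(\H)$ with $\ost$ an operator subsystem of some $\osw$, first extend $\psi$ to a ucp map $\Psi:\osw\to\B(\H)$ by Arveson's theorem, and then $\phi_0\circ\Psi$ is a ucp extension landing in $\osi$.

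The minimality clause in the definition of injective envelope and the uniqueness assertion both reduce to a single \emph{rigidity property}: every ucp map $\sigma:\osi\to\osi$ with $\sigma|_\osr=\mathrm{id}_\osr$ must equal $\mathrm{id}_\osi$. Granting rigidity, minimality is immediate: if $\iota_{\rm ie}(\osr)\subseteq\osq\subseteq\osi$ with $\osq$ injective, then injectivity of $\osq$ produces a ucp retraction $\osi\to\osq$ whose composition with the inclusion $\osq\hookrightarrow\osi$ is a ucp self-map fixing $\osr$, hence the identity, so $\osq=\osi$. Uniqueness follows just as rapidly: given another envelope $(\tilde\osi,\tilde\iota)$, use injectivity of both envelopes to build ucp maps $\phi:\osi\to\tilde\osi$ and $\psi:\tilde\osi\to\osi$ extending $\tilde\iota\circ\iota_{\rm ie}^{-1}$ and its inverse, and apply rigidity on both sides to conclude $\psi\circ\phi=\mathrm{id}_\osi$ and $\phi\circ\psi=\mathrm{id}_{\tilde\osi}$, so $\phi$ is the desired unital complete order isomorphism.

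The main obstacle is the rigidity property itself. The argument proceeds by lifting $\sigma$ via injectivity of $\B(\H)$ to a ucp map $\tilde\sigma:\B(\H)\to\B(\H)$ extending $\sigma\circ\phi_0$, and then verifying that $\phi_0\circ\tilde\sigma\circ\phi_0$ belongs to $\Omega$ and is $\preceq\phi_0$ in the partial order above. The minimality of $\phi_0$ forces equality, from which one extracts $\sigma=\mathrm{id}_\osi$ upon restricting to $\osi=\phi_0(\B(\H))$. The subtle point is checking that the partial-order comparison is genuinely achieved---this is where the precise choice of $\preceq$, together with the idempotence of $\phi_0$, pays off.
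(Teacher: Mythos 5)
The paper does not prove this theorem at all: it is quoted as a result of Hamana with a citation to \cite{hamana1979b}, so your proposal can only be measured against the construction in the literature (Hamana's paper, or Chapter 15 of \cite{Paulsen-book}). Your overall architecture is indeed that construction --- embed $\osr$ in $\B(\H)$, produce a minimal ucp idempotent fixing $\osr$, take its range, and derive minimality and uniqueness from rigidity --- and the injectivity, rigidity, and uniqueness steps are essentially right (for uniqueness against an \emph{arbitrary} envelope $(\tilde\osi,\tilde\iota)$ you cannot literally ``apply rigidity on both sides,'' since rigidity of $\tilde\osi$ is not yet known; instead one uses rigidity of the constructed $\osi$ to get $\psi\circ\phi=\mathrm{id}_\osi$ and then the minimality clause of the definition applied to the injective subsystem $\phi(\osi)\subseteq\tilde\osi$ to get surjectivity. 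This is a repairable gloss.)

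The genuine gap is in the Zorn's lemma step. The relation $\phi\preceq\psi\iff\phi=\phi\circ\psi=\psi\circ\phi$ is not reflexive on non-idempotent elements of $\Omega$, and, more seriously, a descending chain need not have a lower bound obtainable as a point-weak$^*$ cluster point: verifying $\phi_\alpha\circ\phi=\phi$ for the cluster point $\phi$ requires $\phi_\alpha(\phi_{\beta_i}(x))\to\phi_\alpha(\phi(x))$, i.e.\ weak$^*$-continuity (normality) of $\phi_\alpha$, which a general ucp map on $\B(\H)$ does not have. Only \emph{pre}composition with a fixed map is point-weak$^*$ continuous. Hamana's device is precisely designed to dodge this: one orders the associated seminorms $p_\phi(x)=\|\phi(x)\|$, for which weak$^*$ lower semicontinuity of the norm makes any cluster point of a chain a lower bound, and only afterwards upgrades a map with minimal seminorm to an idempotent via the Ces\`aro argument. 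Note also that the Ces\`aro cluster point $\psi$ satisfies $\psi\circ\phi_0=\psi$ and $\psi^2=\psi$ but need not equal $\phi_0$; one replaces $\phi_0$ by $\psi$ (which still has minimal seminorm since $p_\psi\le p_{\phi_0}$) rather than concluding that $\phi_0$ itself is idempotent. With the seminorm order in place, your rigidity computation $\phi_0\circ\tilde\sigma\circ\phi_0=\phi_0$ should likewise be rerun through minimality of $p_{\phi_0}$ (plus the uniqueness of the minimal projection onto its range) rather than through the order $\preceq$.
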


In light of the uniqueness of the injective envelope up to complete order isomorphism, an injective envelope of $\osr$ is denoted by $\mbox{\rm I}(\osr)$.

By the Choi-Effros Embedding Theorem \cite{choi--effros1977}, every operator system $\osr$ is completely order isomorphic
to an operator subsystem of $\B(\H)$, for some Hilbert space $\H$. Thus, every operator system is an operator subsystem of
an injective operator system. However, Hamana \cite{hamana1979b}
proved that if $\osr$ is an operator subsystem of $\B(\H)$, then there exists
a unital completely positive idempotent map $\mathcal E:\B(\H)\rightarrow\B(\H)$ 
such that the range of $\mathcal E$ is an (the) injective of $\osr$. This operator system $\mbox{\rm I}(\osr)$ is 
unitally completely order isomorphic to an injective C$^*$-algebra $\B$ via the Choi-Effros product: specifically,
$\B$ is given by the operator system $\mbox{\rm I}(\osr)$ whereby the product $x\circ y$ of $x,y\in \mbox{\rm I}(\osr)$
is defined to be $x\circ y=\mathcal E(xy)$, with $xy$ denoting the product of $x$ and $y$ in $\B(\H)$.
Within $\B$, the unital C$^*$-algebra generated by $\osr$ is isomorphic to the C$^*$-envelope $\cstare(\osr)$  of $\osr$. 
%The canonical inclusion of $\osr$ into $\mbox{\rm I}(\osr)$ is denoted by $\iota_{\rm ie}$,
%and this map is a unital completely positive order embedding of the operator system $\osr$ into the operator system
%$\mbox{\rm I}(\osr)$. 
These results are summarised in the theorem below, along with a crucial property known as rigidity.

\begin{theorem}\label{hamana op sys}{\rm (Hamana, \cite{hamana1979b})} For every operator system $\osr$, the injective
envelope $\mbox{\rm I}(\osr)$ of $\osr$ has the following properties:
\begin{enumerate}
\item {\rm (C$^*$-envelope)} there is an injective C$^*$-algebra $\B$ and a unital complete order isomorphism $\phi:\mbox{\rm I}(\osr)\rightarrow\B$
such that, if $\A$ denotes the C$^*$-subalgebra of $\B$ generated by $\phi\left( \iota_{\rm ie}(\osr)\right)$, then
$(\A, \phi\circ\iota_{\rm ie})$ is a C$^*$-envelope of $\osr$;
\item {\rm (Rigidity)} if $\omega:\mbox{\rm I}(\osr)\rightarrow\mbox{\rm I}(\osr)$ is a completely positive linear map for which 
$\omega\circ\iota_{\rm ie}=\iota_{\rm ie}$, then $\omega$ is the identity map on $\mbox{\rm I}(\osr)$.
\end{enumerate}
\end{theorem}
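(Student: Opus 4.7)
The plan is to anchor the proof in the concrete model of $\mbox{\rm I}(\osr)$ as the range of a minimal $\osr$-preserving ucp idempotent in $\B(\H)$. First I would realise $\osr$ as an operator subsystem of $\B(\H)$ via the Choi--Effros Embedding Theorem, and consider the set $\mathcal{P}_\osr$ of ucp maps $\mathcal{E}\colon\B(\H)\to\B(\H)$ that restrict to the identity on $\osr$. A Zorn's lemma argument, combined with a Ces\`aro-averaging step along the powers $\mathcal{E}, \mathcal{E}^2,\ldots$ to force idempotence, produces a minimal ucp idempotent $\mathcal{E}_0 \in \mathcal{P}_\osr$. Its range, equipped with the Choi--Effros product $x\circ y:=\mathcal{E}_0(xy)$, is a unital injective C$^*$-algebra $\B$, and by the uniqueness statement of Theorem \ref{hamana op sys 1} there is a unital complete order isomorphism $\phi\colon\mbox{\rm I}(\osr)\to\B$. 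Let $\A$ denote the C$^*$-subalgebra of $\B$ generated by $\phi(\iota_{\rm ie}(\osr))$.

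For part (1), I would verify the universal property of $(\A,\phi\circ\iota_{\rm ie})$ directly. Given any unital complete order embedding $\kappa\colon\osr\to\B'$ with $\kappa(\osr)$ generating $\B'$ as a C$^*$-algebra, use injectivity of $\mbox{\rm I}(\osr)$ to extend the ucp map $\iota_{\rm ie}\circ\kappa^{-1}\colon\kappa(\osr)\to\mbox{\rm I}(\osr)$ to a ucp map $\Phi\colon\B'\to\mbox{\rm I}(\osr)$. The key step is to show $\Phi$ is a $*$-homomorphism. Because $\Phi|_{\kappa(\osr)}$ is a complete isometry, the Kadison--Schwarz inequality $\Phi(x^*x)\geq\Phi(x)^*\circ\Phi(x)$ is an equality for every $x\in\kappa(\osr)$; Choi's multiplicative domain theorem then places $\kappa(\osr)$ inside $\mathrm{MD}(\Phi)$. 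Since multiplicative domains are $*$-subalgebras and $\kappa(\osr)$ generates $\B'$, we obtain $\mathrm{MD}(\Phi)=\B'$, so $\Phi$ is a unital $*$-homomorphism $\B'\to\A$ satisfying $\Phi\circ\kappa=\phi\circ\iota_{\rm ie}$, which gives the required universal property.

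For part (2) (Rigidity), suppose $\omega\colon\mbox{\rm I}(\osr)\to\mbox{\rm I}(\osr)$ is ucp with $\omega\circ\iota_{\rm ie}=\iota_{\rm ie}$. I would use injectivity of $\mbox{\rm I}(\osr)$ to extend $\omega$ along the inclusion $\mbox{\rm I}(\osr)\hookrightarrow\B(\H)$ to a ucp map $\tilde\omega\colon\B(\H)\to\mbox{\rm I}(\osr)$ that still fixes $\osr$. The composition $\mathcal{E}_0\circ\tilde\omega$ lies in $\mathcal{P}_\osr$, and a Ces\`aro-averaging argument along its iterates produces an idempotent element of $\mathcal{P}_\osr$ comparable to $\mathcal{E}_0$ in the partial order used to establish minimality. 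The minimality of $\mathcal{E}_0$ then forces $\mathcal{E}_0\circ\tilde\omega=\mathcal{E}_0$; restricting this identity to $\mbox{\rm I}(\osr)=\mathrm{Ran}(\mathcal{E}_0)$ gives $\omega=\mathrm{id}_{\mbox{\rm I}(\osr)}$.

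The main obstacle, in my view, is the rigidity argument: choosing a partial order on $\mathcal{P}_\osr$ so that (i) Zorn's lemma supplies a minimal element, (ii) minimal elements are automatically idempotent, and (iii) the inequality $\mathcal{E}_0\circ\tilde\omega\preceq\mathcal{E}_0$ genuinely follows from the hypothesis that $\tilde\omega$ fixes $\osr$. Once this minimality machinery is in place, the $*$-homomorphism argument in part (1), via Choi's multiplicative domain theorem, is relatively clean.
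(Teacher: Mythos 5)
The paper offers no proof of this theorem---it is quoted from Hamana \cite{hamana1979b}---so your sketch can only be measured against the standard argument, whose architecture (minimal $\osr$-fixing ucp idempotent $\mathcal{E}_0$ on $\B(\H)$, Choi--Effros product on its range, multiplicative-domain argument for the universal property) you have correctly identified. The genuine gap is in part (1): the step ``because $\Phi|_{\kappa(\osr)}$ is a complete isometry, the Kadison--Schwarz inequality is an equality on $\kappa(\osr)$'' is false as a general principle. For instance, with $\osr=\oss_1=\mathrm{span}\{1,z,\bar z\}\subset C(\mathbb T)$, the ucp map $f\mapsto M_f\oplus\int f\,dm$ from $C(\mathbb T)$ into $\B\bigl(\ell^2(\mathbb Z)\oplus\mathbb C\bigr)$ restricts to a unital complete isometry on $\oss_1$, yet it sends $\bar z z=1$ to $1\oplus 1$ while $\Phi(z)^*\Phi(z)=1\oplus 0$. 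What actually forces Schwarz equality in Hamana's argument is rigidity, i.e.\ part (2): one extends $\kappa$ to a ucp map $\tilde\kappa:\mbox{\rm I}(\osr)\rightarrow\B(\K)$ and $\Phi$ to $\tilde\Phi:\B(\K)\rightarrow\mbox{\rm I}(\osr)$, notes that $\tilde\Phi\circ\tilde\kappa$ fixes $\osr$ and hence equals the identity on $\mbox{\rm I}(\osr)$ by rigidity, and then extracts multiplicativity of $\Phi$ on $\kappa(\osr)$ from the ucp idempotent $\tilde\kappa\circ\tilde\Phi$ and its Choi--Effros product. So part (1) must be deduced \emph{from} part (2); your outline proves neither in an order that makes this work.

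Part (2) as written also does not close, for a concrete reason beyond the difficulty you flag: you compose in the wrong order. Since $\tilde\omega$ already maps $\B(\H)$ into $\mbox{\rm I}(\osr)=\mathrm{Ran}(\mathcal{E}_0)$ and $\mathcal{E}_0$ is idempotent, $\mathcal{E}_0\circ\tilde\omega=\tilde\omega$, so the assertion $\mathcal{E}_0\circ\tilde\omega=\mathcal{E}_0$ is literally the conclusion you are trying to prove, and no seminorm comparison with $\mathcal{E}_0$ is available for this composition. The comparison that works is for $\psi=\tilde\omega\circ\mathcal{E}_0$: one has $\|\psi(x)\|\le\|\mathcal{E}_0(x)\|$ for all $x$, minimality of the $\osr$-seminorm $x\mapsto\|\mathcal{E}_0(x)\|$ forces equality, and a Ces\`aro cluster point $\chi$ of the iterates of $\psi$ satisfies $\chi\circ\psi=\chi$ together with $\|\chi(\cdot)\|=\|\mathcal{E}_0(\cdot)\|$, whence $\|\mathcal{E}_0(\psi(x)-x)\|=\|\chi(\psi(x))-\chi(x)\|=0$, i.e.\ $\mathcal{E}_0\circ\tilde\omega\circ\mathcal{E}_0=\mathcal{E}_0$; restricting to the range gives $\omega=\mathrm{id}$. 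This seminorm machinery (Zorn on $\osr$-seminorms with point-ultraweak compactness, not on idempotents) is where the content of the theorem lives, and it is precisely the part your sketch leaves open.
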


Theorem \ref{hamana op sys} indicates that the injective envelope of $\osr$, when viewed as a unital injective C$^*$-algebra,
contains a copy of the C$^*$-envelope of $\osr$ as a unital C$^*$-subalgebra.

%%%%%%%%%%%%%%%%%%%%%
\subsection{Dual operator systems, minimal tensor products, and entanglement}

As every operator system is a normed vector space \cite[p.~179]{choi--effros1977}, the dual space $\osr^d$ of an operator system $\osr$
is a Banach space. A matrix ordering of $\osr^d$ occurs when we declare an $n\times n$ matrix
$G=[\gamma_{ij}]_{i,j=1}^n$ of linear functionals on $\osr$ to be positive 
if the linear function $\hat G:\osr\rightarrow\M_n(\mathbb C)$ defined by
\[
\hat G(x)=\left[ \gamma_{ij}(x)\right]_{i,j=1}^n, \mbox{ for }x\in\osr,
\]
is completely positive \cite[Lemma 4.2]{choi--effros1977}. 
%Let $\left\{ \M_n(\osr^d)_+\right\}_{n\in\mathbb N}$ 
%denote this system of positive cones in the matrix spaces $\M_n(\osr^d)$
%over $\osr^d$. 
While it is not true that this matrix ordering admits an Archimedean order unit for every operator system, 
the duals of finite-dimensional operator systems do possess an 
Archimedean order unit for this matrix ordering
\cite[Corollary 4.5]{choi--effros1977}.
Indeed, every faithful state $\delta$ on $\osr$ is an 
Archimedean order unit \cite[Lemma 2.5]{kavruk2014} for the matrix ordering of $\osr^d$. Thus, 
the choice of Archimedean order unit $\delta$ for the dual $\osr^d$
of a finite-dimensional operator system $\osr$ is not canonical.

With regards to questions of purity, the use of an operator system dual can be useful in light of the following straightforward result.

\begin{proposition}\label{pure dual} Suppose that $\osr$ and $\ost$ are finite-dimensional operator systems and that $\phi:\osr\rightarrow\ost$
is a completely positive linear map. Then:
\begin{enumerate}
\item the linear adjoint $\phi^d:\ost^d\rightarrow\osr^d$ is completely positive, and
\item $\phi$ is pure in $\mathcal C\mathcal P(\osr, \ost)$ if and only if 
$\phi^d$ is pure in $\mathcal C\mathcal P(\ost^d, \osr^d)$.
\end{enumerate}
\end{proposition}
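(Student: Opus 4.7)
The plan is to treat the two parts separately, with part (2) following from part (1) by a double-dual cone-isomorphism argument.

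For part (1), I would unwind the definition of the matrix ordering on the dual. Given $G=[\gamma_{ij}]_{i,j=1}^n\in\M_n(\ost^d)_+$, by definition the linear map $\hat G:\ost\to\M_n(\mathbb C)$ sending $y$ to $[\gamma_{ij}(y)]$ is completely positive. I would then observe that the functional map associated with $[\phi^d(\gamma_{ij})]\in\M_n(\osr^d)$ satisfies
\[
[\phi^d(\gamma_{ij})]^{\wedge}(x)=[\gamma_{ij}(\phi(x))]=\hat G(\phi(x))=(\hat G\circ\phi)(x),\qquad x\in\osr.
\]
Since compositions of completely positive maps are completely positive, $\hat G\circ\phi$ is completely positive, hence $[\phi^d(\gamma_{ij})]$ lies in $\M_n(\osr^d)_+$. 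As $n$ and $G$ were arbitrary, this establishes complete positivity of $\phi^d$.

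For part (2), the crucial observation is that part (1) shows the assignment $\phi\mapsto\phi^d$ is a linear map of convex cones from $\mathcal C\mathcal P(\osr,\ost)$ into $\mathcal C\mathcal P(\ost^d,\osr^d)$. Using finite-dimensionality, the canonical evaluation maps $\osr\to\osr^{dd}$ and $\ost\to\ost^{dd}$ are unital complete order isomorphisms, and under these identifications $\phi^{dd}$ coincides with $\phi$. Applying part (1) to $\phi^d$ then shows that the assignment $\vartheta\mapsto\vartheta^d$ is an inverse to $\phi\mapsto\phi^d$ at the level of CP cones, so duality is a linear bijection between the cones $\mathcal C\mathcal P(\osr,\ost)$ and $\mathcal C\mathcal P(\ost^d,\osr^d)$.

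Pure elements are, by definition, the nonzero generators of half-line faces of the associated cone. Any linear cone isomorphism automatically carries faces to faces and, in particular, half-line faces to half-line faces, so if $\phi=\vartheta^d+\omega^d$ is a decomposition arising from some $\phi^d=\vartheta+\omega$, purity of $\phi$ forces $\vartheta^d=s\phi$ and $\omega^d=(1-s)\phi$, and dualizing once more then yields $\vartheta=s\phi^d$ and $\omega=(1-s)\phi^d$; the converse direction is symmetric. The only non-trivial input in the entire argument is the reflexivity of finite-dimensional operator systems as a complete order isomorphism $\osr\simeq\osr^{dd}$; once that standard fact is in hand, everything reduces to routine dualization and bookkeeping.
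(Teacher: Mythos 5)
Your argument is correct, and it is essentially the ``straightforward'' argument the paper has in mind: the paper states Proposition~\ref{pure dual} without proof, and your two steps---(1) the identity $[\phi^d(\gamma_{ij})]^{\wedge}=\hat G\circ\phi$ showing $\phi^d$ is completely positive, and (2) Choi--Effros reflexivity $\osr\simeq\osr^{dd}$ making $\phi\mapsto\phi^d$ a linear isomorphism of the CP cones, hence a bijection on half-line faces---are exactly the intended ingredients. One point worth making explicit when you apply part (1) to maps out of $\ost^d$: the matrix ordering on $\osr^{dd}$, and likewise the notion of purity, depends only on the matrix cones of $\osr^d$ and not on the (non-canonical) choice of faithful state serving as its Archimedean order unit, so the ambiguity the paper flags in choosing an order unit for $\osr^d$ does not affect the argument.
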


%%%%%%%%%
%\subsection{Minimal and maximal operator system tensor products} 

The algebraic tensor product $\osr\otimes\ost$ of operator systems $\osr$ and $\ost$ is a complex $*$-vector space,
and there are many possible matrix orderings that are induced by the matrix orderings of $\osr$ and $\ost$ that give 
$\osr\otimes\ost$ the structure of an operator system \cite{kavruk--paulsen--todorov--tomforde2011}. 
In the case where $\osr=\M_k(\mathbb C)$ and $\ost=\M_m(\mathbb C)$, there is a unique operator system
tensor product structure on $\osr\otimes\ost$ and it is the one induced by considering $M_n(\M_k(\mathbb C)\otimes \M_m(\mathbb C)$
as a unital C$^*$-algebra.

\begin{definition} {\rm (\cite[\S4]{kavruk--paulsen--todorov--tomforde2011})} 
The \emph{minimal operator system tensor product} $\osr\omin\ost$
of operator systems $\osr$ and $\ost$  is the operator system whose matrix ordering 
is defined so that a matrix $X=[x_{ij}]_{i,j=1}^n\in\M_n(\osr\otimes\ost)$ is positive if
\[
\left[(\phi\otimes\psi)(x_{ij})\right]_{i,j=1}^n \in \left( M_n(\M_k(\mathbb C)\otimes \M_m(\mathbb C))\right)_+,
\]
for all unital completely positive linear maps $\phi:\osr\rightarrow\M_k(\mathbb C)$ and
$\psi:\ost\rightarrow\M_m(\mathbb C)$ and for all $k,m\in\mathbb N$,
and whose canonical Archimedean order unit is given by $e_\osr\otimes e_\ost$.
\end{definition}

The minimal operator system tensor product $\osr\omin\ost$ of 
operator systems $\osr$ and $\ost$ may be realised by representing $\osr$ and $\ost$ as operator subsystems of $\B(\H)$ and $\B(\K)$, respectively, and
then endowing the vector space $\osr\otimes\ost$ of operators on the Hilbert space $\H\otimes\K$ with the operator system structure induced by the operator system structure of $\B(\H\otimes\K)$
\cite[Theorem 4.4]{kavruk--paulsen--todorov--tomforde2011}.

%%%%
%\subsection{The identity map and entanglement}

If $\mathcal V$ is a finite-dimensional vector space, then the tensor product $\mathcal V\otimes \mathcal V^d$ of $\mathcal V$ with its dual $\mathcal V^d$
is linearly isomorphic to $\mathcal L(\mathcal V)$, the vector space of linear transformations on $\mathcal V$. If we apply this linear isomorphism to a finite-dimensional operator system $\osr$ and its operator system dual $\osr^d$,
then the cone $\mathcal C\mathcal P(\osr)$ in $\mathcal L(\osr)$ of completely positive linear maps on $\osr$ determines a cone in $\osr\otimes\osr^d$. This cone is in fact the positive cone of $\osr\omin\osr^d$, where
$\omin$ denotes the minimal operator system tensor product \cite[\S4]{kavruk--paulsen--todorov--tomforde2011}, \cite[Lemma 8.5]{kavruk--paulsen--todorov--tomforde2013}.

The canonical linear isomorphism between $\osr\otimes\osr^d$ and $\mathcal L(\osr)$ is the one that maps elementary tensors $x\otimes\psi\in\osr\otimes\osr^d$ to rank-1 linear transformations $r\mapsto\psi(r)x$, for $r\in\osr$. Let
$\Gamma:\mathcal L(\osr)\rightarrow \osr\otimes\osr^d$ be the inverse of this canonical linear isomorphism. Thus, 
\[
\Gamma\left(\mathcal C\mathcal P(\osr)\right)=\left(\osr\omin\osr^d\right)_+,
\]
the cone of positive elements of the operator system $\osr\omin\osr^d$. Thus, it is clear that $\phi\in\mathcal C\mathcal P(\osr)$ is pure if and only if $\Gamma(\phi)$ is pure (i.e., only if $\Gamma(\phi)$ generates a half-line face of the cone 
$\left(\osr\omin\osr^d\right)_+$).

\begin{definition}{\rm (\cite{kavruk2015})} If $\osr$ is a finite-dimensional operator system, then an 
element $\xi\in\osr\otimes\osr^d$ is \emph{maximally entangled} if there exist bases $\{x_0,\dots,x_m\}$ and $\{\delta_0,\dots,\delta_m\}$ of $\osr$ and $\osr^d$, respectively, such that
\begin{enumerate}
\item $\{x_0,\dots,x_m\}$ and $\{\delta_0,\dots,\delta_m\}$ are dual bases (i.e., $\delta_i(x_i)=1$ and $\delta_i(x_j)=0$ if $j\not=i$, for all $i$ and $j$), 
\item $x_0=e_\osr$ and $\delta_0=e_{\osr^d}$, and
\item $\xi=\displaystyle\sum_{j=0}^m x_j\otimes \delta_j$.
\end{enumerate}
\end{definition}

\begin{proposition}\label{max ent} Let $\iota:\osr\rightarrow\osr$ be the identity map of a finite-dimensional operator system $\osr$. Then $\Gamma(\iota)$ is the unique maximally entangled 
element of $\osr\otimes\osr^d$.
\end{proposition}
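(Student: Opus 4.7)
The plan is to prove both existence (namely, $\Gamma(\iota)$ is maximally entangled) and uniqueness (namely, there is no other maximally entangled element) by exploiting the elementary linear-algebraic fact that under the canonical isomorphism $\osr\otimes\osr^d\simeq \mathcal L(\osr)$, every sum $\sum_j x_j\otimes\delta_j$ built from a pair of dual bases represents the identity map, regardless of the choice of bases.

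First I would observe that if $\{x_0,\dots,x_m\}\subseteq \osr$ and $\{\delta_0,\dots,\delta_m\}\subseteq \osr^d$ are dual bases, then for every $r\in\osr$ one has $r=\sum_{j}\delta_j(r)x_j$; under the canonical linear isomorphism, the elementary tensor $x_j\otimes\delta_j$ corresponds to $r\mapsto \delta_j(r)x_j$, so $\sum_{j}x_j\otimes \delta_j$ corresponds to the identity map $\iota$. Hence $\Gamma(\iota)=\sum_{j=0}^m x_j\otimes \delta_j$ for \emph{any} choice of dual bases. In particular, any two such dual-basis sums coincide, which already handles uniqueness: if $\xi\in\osr\otimes\osr^d$ is maximally entangled in the sense of the definition, then by clause (3) $\xi$ is representable as $\sum_j x_j\otimes\delta_j$ for some dual bases, and the observation above forces $\xi=\Gamma(\iota)$.

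For existence, I need to exhibit dual bases that also satisfy clause (2) of the definition, i.e., with $x_0=e_\osr$ and $\delta_0=e_{\osr^d}$. Because $e_{\osr^d}$ is a faithful state on $\osr$, it is unital, so $e_{\osr^d}(e_\osr)=1$; in particular $e_\osr\notin \ker e_{\osr^d}$. The subspace $\ker e_{\osr^d}$ has codimension one in $\osr$, so choosing any vector-space basis $\{y_1,\dots,y_m\}$ of it yields a basis $\{e_\osr, y_1,\dots,y_m\}$ of $\osr$ whose first dual-basis functional is precisely $e_{\osr^d}$. Letting $\{\delta_1,\dots,\delta_m\}$ be the linear functionals dual to $\{y_1,\dots,y_m\}$ on $\ker e_{\osr^d}$, suitably extended so as to vanish on $e_\osr$, gives the required dual basis. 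The formula from the first step then certifies that
\[
\Gamma(\iota)=e_\osr\otimes e_{\osr^d}+\sum_{j=1}^m y_j\otimes \delta_j,
\]
so $\Gamma(\iota)$ satisfies all three conditions of maximal entanglement.

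There is not really a hard step here; the only thing to be careful about is that the definition pins down $x_0$ and $\delta_0$, so one must verify that a dual basis with these prescribed first coordinates exists, which is where unitality of the faithful state $e_{\osr^d}$ (and hence $e_{\osr^d}(e_\osr)=1$) is used. Once that is in hand, the ``invariance under choice of dual bases'' observation gives both parts of the proposition in one stroke.
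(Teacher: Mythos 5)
Your proof is correct and follows essentially the same route as the paper's: construct dual bases with $x_0=e_\osr$ and $\delta_0=e_{\osr^d}$ by completing a basis of $\ker e_{\osr^d}$ (using $e_{\osr^d}(e_\osr)=1$), note that any dual-basis sum $\sum_j x_j\otimes\delta_j$ corresponds to $\iota$ under the canonical isomorphism, and deduce uniqueness because every maximally entangled element must therefore equal $\Gamma(\iota)$. Your explicit statement of the basis-independence of dual-basis sums makes the uniqueness step slightly more transparent, but the argument is the same.
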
 

\begin{proof} Set $x_0=e_\osr$ and let $\delta_0$ be a faithful state on $\osr$; thus, $\delta_0$ is a positive linear functional, $\delta_0(x_0)=1$, and $\delta_0$ serves as an Archimedean order unit $e_{\osr^d}$ for
$\osr^d$. Select a basis $\{x_1,\dots,x_m\}$ of $\ker\delta_0$
and a dual basis $\{\delta_1,\dots,\delta_m\}$ for the vector space $\ker\delta_0$; this dual basis can be realised by linear functionals on $\osr$ such that 
$\{x_0,x_1,\dots,x_m\}$ and $\{\delta_0,\delta_1,\dots,\delta_m\}$ are dual bases for $\osr$ and $\osr^d$. Set $\xi=\displaystyle\sum_{j=0}^m x_j\otimes \delta_j$, which by definition
is maximally entangled. Under the canonical isomorphism
$\osr\otimes\osr^d\rightarrow \mathcal L(\osr)$, each $x_i\otimes\delta_i$ maps to an operator on $\osr$ that annihilates every $x_j$ with $j\not=i$ and fixes $x_i$. That is, $\Gamma^{-1}(\xi)=\iota$,
making 
$\Gamma(\iota)=\xi$. The uniqueness of $\xi$ as a maximally entangled element is a consequence of the fact that if $\xi'$ were any other maximally entangled element, then $\Gamma^{-1}(\xi')$
must be $\iota$, implying that $\xi'=\xi$.
\end{proof}

\begin{corollary}\label{max ent pure} The identity map $\iota:\osr\rightarrow\osr$ of a finite-dimensional operator system $\osr$ is pure if and only if the maximally entangled element $\Gamma(\iota)\in\left(\osr\omin\osr^d\right)_+$ is pure.
\end{corollary}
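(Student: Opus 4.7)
The plan is to derive the corollary almost immediately by combining Proposition \ref{max ent} with the general principle that a linear cone isomorphism preserves the facial structure, and in particular sends half-line faces to half-line faces. First I would recall the setup: $\Gamma:\mathcal L(\osr)\to \osr\otimes\osr^d$ is a \emph{linear isomorphism} of vector spaces and, as recorded in the paragraph preceding the statement, it restricts to a bijection
\[
\Gamma\bigl(\mathcal C\mathcal P(\osr)\bigr)=\bigl(\osr\omin\osr^d\bigr)_+.
\]
Thus $\Gamma$ is an isomorphism of convex cones.

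Next I would spell out the facial implication. If $\phi\in\mathcal C\mathcal P(\osr)$ and $\Gamma(\phi)=\vartheta+\omega$ with $\vartheta,\omega\in(\osr\omin\osr^d)_+$, then applying $\Gamma^{-1}$ produces a decomposition $\phi=\Gamma^{-1}(\vartheta)+\Gamma^{-1}(\omega)$ inside $\mathcal C\mathcal P(\osr)$, and symmetrically for the reverse direction. Consequently, $\phi$ generates a half-line face of $\mathcal C\mathcal P(\osr)$ if and only if $\Gamma(\phi)$ generates a half-line face of $(\osr\omin\osr^d)_+$; that is, $\phi$ is pure in $\mathcal C\mathcal P(\osr)$ if and only if $\Gamma(\phi)$ is pure in $(\osr\omin\osr^d)_+$.

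Finally, I would invoke Proposition \ref{max ent}, which identifies $\Gamma(\iota)$ as the unique maximally entangled element of $\osr\otimes\osr^d$. Specialising the previous equivalence to $\phi=\iota$ then yields the corollary: $\iota$ is pure in $\mathcal C\mathcal P(\osr)$ if and only if the maximally entangled element $\Gamma(\iota)\in(\osr\omin\osr^d)_+$ is pure. There is no real obstacle here; the statement is essentially a rephrasing, and the only point that requires any care is making explicit that a linear isomorphism of cones automatically transports half-line faces, which is immediate from the definition because every positive scalar multiple and every ordered cone decomposition is preserved by $\Gamma$ and $\Gamma^{-1}$.
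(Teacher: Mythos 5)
Your argument is correct and matches the paper's (implicit) reasoning exactly: the paper states this corollary without a separate proof because the preceding paragraph already records that $\Gamma$ carries $\mathcal C\mathcal P(\osr)$ onto $\left(\osr\omin\osr^d\right)_+$ and hence preserves purity, and Proposition \ref{max ent} identifies $\Gamma(\iota)$ as the maximally entangled element. Your only addition is to spell out why a linear cone isomorphism transports half-line faces, which is the same observation the paper treats as immediate.
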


%%%%%%%%
\subsection{Boundary representations} 
Boundary representations of the C$^*$-algebra generated by an operator system have an important role in 
the proving our results on the purity of identity mappings.

\begin{definition}
If $\A$ is a unital C$^*$-algebra generated by an operator system $\osr$, then a representation
$\pi:\A\rightarrow\B(\K)$ of $\A$ on some Hilbert space $\K$ is a \emph{boundary representation for $\osr$} if (i) $\pi$ is irreducible and
(ii) $\pi$ is the unique ucp extension to $\A$ of the completely positive linear map $\pi_{\vert\osr}:\osr\rightarrow\B(\K)$.
\end{definition}

The first tool we shall use is the following one. 

\begin{proposition}\label{br->p} If $\A$ is a unital C$^*$-algebra generated by an operator system $\osr$, and if 
$\pi:\A\rightarrow\B(\K)$ is a boundary representation of $\osr$, then $\pi_{\vert\osr}$ is a pure element of 
$\mathcal C\mathcal P(\osr,\B(\K))$.
\end{proposition}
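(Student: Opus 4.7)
The plan is to verify purity directly from the definition. Suppose $\pi_{|\osr} = \vartheta + \omega$ with $\vartheta, \omega \in \mathcal{CP}(\osr, \B(\K))$. The goal is to produce $s \in [0,1]$ such that $\vartheta = s \pi_{|\osr}$ and $\omega = (1-s)\pi_{|\osr}$.

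First I would promote the decomposition from $\osr$ to all of $\A$. By Arveson's Hahn--Banach extension theorem, $\vartheta$ and $\omega$ extend to completely positive linear maps $\tilde\vartheta, \tilde\omega : \A \to \B(\K)$. Since $\pi$ is a $*$-homomorphism and hence ucp, $\vartheta(e_\osr) + \omega(e_\osr) = I_\K$, so $\tilde\vartheta + \tilde\omega$ is a ucp map on $\A$ whose restriction to $\osr$ agrees with $\pi_{|\osr}$. The defining property (ii) of a boundary representation, namely uniqueness of the ucp extension, forces $\tilde\vartheta + \tilde\omega = \pi$ on all of $\A$. In particular, $\tilde\vartheta \leq \pi$ and $\tilde\omega \leq \pi$ in the cone $\mathcal{CP}(\A, \B(\K))$.

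Next I would apply Arveson's Radon--Nikodym theorem for completely positive maps. Viewing $\pi$ as its own minimal Stinespring dilation $\pi(a) = I_\K^{*}\pi(a)I_\K$ on $\K$ (the minimality is immediate since $\pi(1) = I_\K$), any cp map dominated by $\pi$ must be of the form $a \mapsto T\pi(a)$ for some positive contraction $T$ in the commutant $\pi(\A)'$. Applying this to $\tilde\vartheta$, there is a positive contraction $T \in \pi(\A)'$ with $\tilde\vartheta(a) = T\pi(a)$ for every $a \in \A$. Because $\pi$ is irreducible (property (i) of a boundary representation), $\pi(\A)' = \mathbb{C}I_\K$, so $T = s I_\K$ for some scalar $s \in [0,1]$. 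Consequently $\tilde\vartheta = s\pi$ on $\A$, and restricting to $\osr$ gives $\vartheta = s\pi_{|\osr}$ and $\omega = (1-s)\pi_{|\osr}$, which is exactly the assertion of purity.

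The only nontrivial step is the passage from the decomposition on $\osr$ to a genuine cp decomposition of $\pi$ on $\A$; this is where the boundary-representation hypothesis does the real work, as the uniqueness of ucp extensions converts an a priori unrelated pair of extensions into summands of $\pi$. Once this reduction is made, irreducibility together with the Radon--Nikodym theorem supplies the scalar $s$ mechanically.
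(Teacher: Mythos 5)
Your argument is correct and follows essentially the same route as the paper's proof: extend the summands $\vartheta,\omega$ to $\A$ by Arveson's Hahn--Banach theorem, invoke the unique ucp extension property of the boundary representation to identify the sum of the extensions with $\pi$, and then use irreducibility together with the Radon--Nikod\'ym theorem to conclude that each extension is a scalar multiple of $\pi$. The only difference is that you spell out the Radon--Nikod\'ym step (the operator $T\in\pi(\A)'$) explicitly, where the paper cites it directly.
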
 

\begin{proof} Let $\phi=\pi_{\vert\osr}$, where $\pi:\A\rightarrow\B(\K)$ is a boundary representation of $\osr$. Suppose that
$\vartheta,\omega:\osr\rightarrow\B(\K)$ are completely positive linear maps
such that $\vartheta+\omega=\phi$. By the Arveson Hahn-Banach Extension Theorem
\cite[Theorem 1.2.3]{arveson1969},
there are completely positive linear extensions $\Theta$ and
$\Omega$ of $\vartheta$ and $\omega$ from $\osr$ to $\A$. Thus, $\Theta+\Omega$ is a completely positive extension of
$\phi$; hence, $\pi=\Theta+\Omega$. Because $\pi$ is irreducible, the Radon-Nikod\'ym
Theorem for completely positive linear maps \cite[Theorem 1.4.2]{arveson1969} 
implies that $\Theta=s\pi$ and $\Omega=t\pi$,
for some $s,t\in[0,1]$. Hence, $\vartheta=s\phi$ and $\omega=t\phi$, proving that $\phi$ is pure.
\end{proof} 

 The crucial link between boundary representations and C$^*$-envelopes, given by the following
 Choquet-type theorem, is the second tool from the theory of boundary representations 
 that is required to prove
 our purity results.
 
 \begin{theorem}\label{bndry thm}{\rm (Arverson-Davidson-Kennedy \cite{arveson2008,davidson--kennedy2015})}
Suppose that $\osr$ is an operator subsystem of a unital C$^*$-algebra $\A$ and that $\A=\cstar(\osr)$. Let
$\mathfrak S$ denote the ideal of $\cstar(\osr)$ given by
 \[
 \mathfrak S=\{a\in\cstar(\osr)\,|\,\pi(a)=0 \mbox{ for every boundary representation }\pi\mbox{ of }\osr\},
 \]
 and let $q:\cstar(\osr)\rightarrow\cstar(\osr)\rightarrow\cstar(\osr)/\mathfrak S$ denote the canonical quotient homomorphism. Then
 $\left(\cstar(\osr)/\mathfrak S, q_{\vert \osr} \right)$ is a C$^*$-envelope of $\osr$. That is,
 $\cstare(\osr)=\cstar(\osr)/\mathfrak S$ and the ucp map $\iota_e=q_{\vert\osr}$ is a complete order embedding
 of $\osr$ into its C$^*$-envelope $\cstar(\osr)/\mathfrak S$. Furthermore, for any matrix $X\in\M_n(\osr)$, the norm of $X$ is given
 by
 \[
 \|X\|=\max\left\{\|\pi^{(n)}(X)\|\,|\,\pi\mbox{ is a boundary representation of }\osr\right\},
 \]
 where $\pi^{(n)}$ denotes the unital $*$-homomorphism on $\M_n\left(\cstar(\osr)\right)$ that maps every matrix
 $\left[a_{ij}\right]_{i,j=1}^n$ of elements of $\cstar(\osr)$ to $\left[\pi(a_{ij})\right]_{i,j=1}^n$.
 \end{theorem}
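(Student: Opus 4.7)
The plan is to deduce the theorem from two inputs that are by now classical: Arveson's universal property of the C$^*$-envelope via the Shilov boundary ideal, and the Davidson--Kennedy theorem that boundary representations are sufficient to completely norm an operator system. Given those, the statement reduces to the identification $\mathfrak S = $ Shilov boundary ideal, together with the explicit norm formula. I would organise the argument in three stages.

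\textbf{Stage 1: setting up the Shilov boundary ideal.} I would define the \emph{Shilov boundary ideal} $\mathfrak J$ of $\osr$ inside $\cstar(\osr)$ to be the largest two-sided closed ideal such that the composition $\osr \hookrightarrow \cstar(\osr) \twoheadrightarrow \cstar(\osr)/\mathfrak J$ is a complete order embedding (equivalently, a complete isometry, since both spaces are operator systems with the quotient unit). Existence of a largest such ideal is a Zorn/exhaustion argument based on the fact that the family of ideals with this property is closed under arbitrary sums. Hamana's rigidity theorem (or Arveson's original argument) then identifies the quotient $(\cstar(\osr)/\mathfrak J,\, q_{|\osr})$ with the C$^*$-envelope, so that $\cstare(\osr) = \cstar(\osr)/\mathfrak J$.

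\textbf{Stage 2: existence of enough boundary representations.} This is where the Davidson--Kennedy machinery enters and will be the main obstacle. The key notion is the \emph{unique extension property} (UEP): a ucp map $\phi:\osr\to\B(\K)$ has UEP if its extension to $\cstar(\osr)$ is unique and necessarily a $*$-representation. Boundary representations are precisely the irreducible ucp maps with UEP. The crux is to show that every pure matrix state $\phi_0:\osr\to\mn$ can be dilated to a boundary representation $\pi:\cstar(\osr)\to\B(\K)$ with $\phi_0$ a compression of $\pi_{|\osr}$; indeed, one builds a \emph{maximal dilation} of $\phi_0$ (a ucp map with no nontrivial dilations, whose existence is a Zorn argument in the cone of dilations), checks via Arveson's Radon--Nikod\'ym theorem that maximality forces UEP on the resulting $*$-representation, and checks via a careful pure-dilation refinement that irreducibility is preserved when one starts from a pure map. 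This is the step that requires real work and where I would invoke the original arguments of Arveson \cite{arveson2008} and Davidson--Kennedy \cite{davidson--kennedy2015} rather than reprove them.

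\textbf{Stage 3: the norm formula and identification of $\mathfrak S$.} With Stage~2 in hand, the inequality $\|\pi^{(n)}(X)\| \le \|X\|$ is automatic for every boundary representation (since $\pi$ is a contraction on the $\cstar$-algebra). For the reverse inequality, given $X\in\M_n(\osr)$ choose, by Hahn--Banach / Arveson extension, a matrix state $\phi:\osr\to\mn$ with $\|\phi^{(n)}(X)\|=\|X\|$; a standard convexity argument (matrix-extreme points of the matrix state space generate the whole free convex set) lets us replace $\phi$ by a pure one without decreasing $\|\phi^{(n)}(X)\|$; then Stage~2 dilates this pure $\phi$ to a boundary representation $\pi$ with $\|\pi^{(n)}(X)\|\ge\|\phi^{(n)}(X)\|=\|X\|$. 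This proves the norm formula. Finally, the formula shows that $q_{|\osr}:\osr\to \cstar(\osr)/\mathfrak S$ is a complete isometry, so $\mathfrak S \subseteq \mathfrak J$; conversely, the UEP characterisation of boundary representations, together with the fact that any complete-isometry-preserving quotient $\cstar(\osr)/\mathfrak J$ still carries extensions of all boundary representations, forces $\mathfrak J \subseteq \mathfrak S$. Combining these, $\mathfrak S = \mathfrak J$, and Stage~1 finishes the identification $\cstare(\osr) = \cstar(\osr)/\mathfrak S$.
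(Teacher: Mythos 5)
The paper offers no proof of this theorem: it is imported verbatim as a known result of Arveson and Davidson--Kennedy, cited precisely so that the authors do not have to reprove it. There is therefore no in-paper argument to compare yours against. Taken on its own terms, your three-stage outline is a faithful summary of how the cited works establish the result: Stage~2 (every pure ucp map dilates to a boundary representation, so that boundary representations completely norm $\osr$) is the entire content of \cite{arveson2008,davidson--kennedy2015}, and you are right to identify it as the step to cite rather than reprove --- which puts your write-up on the same logical footing as the paper's bare citation.

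Two cautions on the parts you do argue. First, in Stage~1 the assertion that the family of boundary ideals is ``closed under arbitrary sums,'' so that a largest one exists by exhaustion, is exactly the point Arveson could not settle in 1969; it is not obvious that the sum of two boundary ideals is again a boundary ideal. The clean routes to the Shilov ideal are either Hamana's construction (take $\mathfrak J$ to be the kernel of the canonical surjection $\cstar(\osr)\rightarrow\cstare(\osr)$) or, a posteriori, the identification $\mathfrak J=\mathfrak S$ once enough boundary representations are known to exist. You mention Hamana as an alternative, so this is repairable, but the sum-of-ideals justification should not be leaned on. Second, for the containment $\mathfrak J\subseteq\mathfrak S$ the mechanism deserves to be spelled out: if $\mathfrak J$ is a boundary ideal and $\pi$ a boundary representation, then $\pi_{\vert\osr}\circ(q_{\vert\osr})^{-1}$ extends by Arveson's extension theorem to a ucp map on $\cstar(\osr)/\mathfrak J$, whose composition with $q$ is a ucp extension of $\pi_{\vert\osr}$ annihilating $\mathfrak J$; the unique extension property forces this extension to equal $\pi$, whence $\mathfrak J\subseteq\ker\pi$. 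With those two points tightened, your sketch is a correct account of the standard proof.
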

 
 %%%%%%
 \subsection{Noncommutativity and the purity of ucp maps}
 The following simple observation will be useful in cases where the codomain of a unital completely positive linear map is an algebra.
 
 \begin{proposition}\label{trivial centre} If a unital C$^*$-algebra $\A$ has nontrivial centre and if 
 $\phi:\osr\rightarrow\A$
 a unital completely positive linear map, the $\phi$ is not a pure element of the cone $\mathcal C\mathcal P(\osr,\A)$.
 \end{proposition}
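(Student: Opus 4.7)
The plan is to exploit the nontrivial centre to explicitly split $\phi$ into a convex-like sum of two completely positive linear maps that cannot both be scalar multiples of $\phi$. The obstacle, such as it is, is mostly bookkeeping: verifying that the constructed maps are genuinely completely positive, which works cleanly because we multiply by a \emph{central} positive element.

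First I would use the hypothesis on the centre to extract a suitable element. Let $\mathcal Z(\A)$ denote the centre of $\A$. Since $\mathcal Z(\A)\neq\mathbb C\cdot 1_\A$, there is a selfadjoint element in $\mathcal Z(\A)$ that is not a scalar multiple of $1_\A$, and by applying continuous functional calculus (which preserves centrality) I can find $z\in\mathcal Z(\A)$ with $0\leq z\leq 1_\A$ and $z\notin\mathbb C\cdot 1_\A$.

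Next, define $\vartheta,\omega:\osr\to\A$ by
\[
\vartheta(x)=z\,\phi(x)=z^{1/2}\phi(x)z^{1/2},\qquad \omega(x)=(1_\A-z)\phi(x)=(1_\A-z)^{1/2}\phi(x)(1_\A-z)^{1/2},
\]
using that $z^{1/2},(1_\A-z)^{1/2}\in\mathcal Z(\A)$. Clearly $\vartheta+\omega=\phi$. To see that $\vartheta$ is completely positive, note that for any $n\in\mathbb N$ and $X\in\M_n(\osr)_+$, the matrix $\phi^{(n)}(X)\in\M_n(\A)_+$ commutes with the positive central element $1_n\otimes z^{1/2}\in\M_n(\A)$, so
\[
\vartheta^{(n)}(X)=(1_n\otimes z^{1/2})\,\phi^{(n)}(X)\,(1_n\otimes z^{1/2})\in\M_n(\A)_+.
\]
The same argument gives complete positivity of $\omega$.

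Finally, I would show that no decomposition of this form can be proportional to $\phi$. If $\phi$ were pure, there would exist $s\in[0,1]$ with $\vartheta=s\phi$. Evaluating at $e_\osr$ and using $\phi(e_\osr)=1_\A$ gives $z=\vartheta(e_\osr)=s\,\phi(e_\osr)=s\,1_\A$, contradicting $z\notin\mathbb C\cdot 1_\A$. Hence $\phi$ is not pure in $\mathcal C\mathcal P(\osr,\A)$, completing the argument.
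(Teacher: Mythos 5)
Your proposal is correct and follows essentially the same route as the paper: take a nonscalar positive central element $a$ with $0\leq a\leq 1$, split $\phi$ as $a^{1/2}\phi(\cdot)a^{1/2}+(1-a)^{1/2}\phi(\cdot)(1-a)^{1/2}$, and evaluate at the order unit to see that neither summand is a scalar multiple of $\phi$. The only difference is that you spell out the functional-calculus normalisation of the central element and the complete positivity of the two summands, which the paper leaves implicit.
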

 
 \begin{proof} By the hypothesis that the centre $\mbox{\rm Z}(\A)$ of $\A$ is nontrivial, there exists a nonscalar positive element $a\in 
 \mbox{\rm Z}(\A)$ of norm $\|a\|=1$. If $\phi:\osr\rightarrow\A$ is a unital completely positive linear map, then define 
 $\vartheta,\omega:\osr\rightarrow\A$ by $\vartheta(x)=a^{1/2}\phi(x)a^{1/2}$ and $\omega(x)=(1-a)^{1/2}\phi(x)(1-a)^{1/2}$, for $x\in\osr$. Observe that, for every $x\in\osr$,
\[
\vartheta(x)+\omega(x)=a^{1/2}xa^{1/2}+(1-a)^{1/2}\phi(x)(1-a)^{1/2}=\phi(x)\left(a+(1-a)\right)=\phi(x).
\]
However, as $a$ is nonscalar, $a=\vartheta(e_\osr)\not=\lambda\phi(e_\osr)$ for every $\lambda\geq0$. Hence, $\phi$ is not pure.
 \end{proof} 
 
 %%%%%%%%%%%%%%%%%%%%%%%%%%%%%%%%%%%%%%%%
\subsection{Extension of pure completely positive linear maps}

In addition to proving that the von Neumann algebra $\B(\H)$ is injective in his seminal paper \cite{arveson1969}, Arveson proved  
that if $\osr $ is an operator subsystem of an operator system $\ost$, then a pure completely positive linear map $\phi:\osr\rightarrow\B(\H)$
extends to a pure completely positive linear map $\Phi:\ost\rightarrow\B(\H)$. The following result shows that pure extensions
occur for pure maps into arbitrary injective von Neumann.

\begin{proposition}[Pure Extensions]\label{hb-thm} 
If $\osr\subseteq\ost$
is an inclusion of operator systems, and if $\phi:\osr\rightarrow\M$ is a pure completely positive linear
map into an injective factor $\M$, then
then $\phi$ extends to a pure completely positive linear map $\Phi:\ost\rightarrow\M$.
\end{proposition}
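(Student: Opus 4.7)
The plan is to extend $\phi$ via injectivity and then refine the extension through a Krein--Milman argument on the convex set of all completely positive extensions. The central observation is that any extreme point of this set is automatically pure in $\mathcal C\mathcal P(\ost,\M)$, so it suffices to produce such an extreme point.

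First I would form
\[
\mathcal{E}_\phi \;=\; \{\Psi:\ost\to\M \,|\, \Psi \in \mathcal C\mathcal P(\ost,\M) \text{ and } \Psi|_\osr = \phi\}.
\]
Nonemptiness is immediate from the injectivity of $\M$. Since $\osr$ is an operator subsystem of $\ost$ they share the Archimedean order unit $e_\osr$, so every $\Psi\in\mathcal E_\phi$ satisfies $\|\Psi\|=\|\Psi(e_\osr)\|=\|\phi(e_\osr)\|=\|\phi\|$, giving uniform norm bounds. Because $\M$ is a von Neumann algebra, Banach--Alaoglu supplies weak-$*$ compactness of its closed balls; combining this with Tychonoff and the closedness of the defining conditions, $\mathcal E_\phi$ is compact in the point-weak-$*$ topology and is evidently convex. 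Krein--Milman then yields an extreme point $\Phi\in\mathcal E_\phi$.

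The main step is to show that such an extreme $\Phi$ is pure in $\mathcal C\mathcal P(\ost,\M)$. Suppose $\Phi=\Omega_1+\Omega_2$ with $\Omega_j\in\mathcal C\mathcal P(\ost,\M)$. Restricting to $\osr$ gives $\phi=\Omega_1|_\osr+\Omega_2|_\osr$, and purity of $\phi$ furnishes $s\in[0,1]$ with $\Omega_1|_\osr=s\phi$ and $\Omega_2|_\osr=(1-s)\phi$. When $s\in(0,1)$, the renormalised maps $s^{-1}\Omega_1$ and $(1-s)^{-1}\Omega_2$ lie in $\mathcal E_\phi$ and exhibit $\Phi$ as a proper convex combination in $\mathcal E_\phi$, so extremality forces $\Omega_j=s_j\Phi$ with $s_1=s$, $s_2=1-s$. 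In the boundary case $s=0$ (the case $s=1$ being symmetric), $\Omega_1|_\osr=0$ and $0\leq\Omega_1\leq\Phi$, so both $\Phi+\Omega_1$ (cp as a sum of cp maps) and $\Phi-\Omega_1=\Omega_2$ are cp extensions of $\phi$, hence lie in $\mathcal E_\phi$; writing $\Phi=\tfrac12(\Phi+\Omega_1)+\tfrac12(\Phi-\Omega_1)$ and invoking extremality forces $\Omega_1=0=0\cdot\Phi$. In either case each summand is a nonnegative scalar multiple of $\Phi$, which is exactly the purity of $\Phi$.

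I do not foresee a serious obstacle: the only real subtlety is the boundary case $s\in\{0,1\}$, where direct renormalisation is unavailable and one must exploit that any $\Omega_1\leq\Phi$ vanishing on $\osr$ can be both added to and subtracted from $\Phi$ without leaving $\mathcal E_\phi$. It is worth remarking that this argument never uses the factor hypothesis on $\M$; it applies to any injective von Neumann algebra, in agreement with the broader phrasing of the result in the abstract.
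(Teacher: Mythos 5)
Your proof is correct and follows essentially the same route as the paper: use injectivity of $\M$ to see that the set of completely positive extensions of $\phi$ to $\ost$ is a nonempty convex set, compact in the point-weak$^*$ topology, extract an extreme point by Krein--Milman, and then use the purity of $\phi$ on restrictions to show that any extreme point of this set is pure. Your explicit handling of the boundary case $s\in\{0,1\}$ is in fact more careful than the paper's argument, which tacitly takes the restrictions of both summands to $\osr$ to be nonzero before renormalising; and your closing observation that only injectivity, not the factor property, of $\M$ is used agrees with the paper's own follow-up proposition for injective operator systems.
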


\begin{proof} Let $\delta=\|\phi\|$ and consider the set $\mathcal C\mathcal P_\delta(\osr,\B(\H))$ of 
all completely positive linear maps $\psi:\osr\rightarrow\M$ of norm $\|\psi\|\leq \delta$. In the point-ultraweak topology, 
$\mathcal C\mathcal P_\delta(\osr,\M)$ is a compact space \cite[Theorem 7.4]{Paulsen-book}. Consider the subset
$\mathcal C$ of $\mathcal C\mathcal P_\delta(\ost,\M)$ consisting of all completely positive linear maps $\Phi:\ost\rightarrow\M$
such that $\Phi_{\vert\osr}=\phi$. By the injectivity of $\M$, the set $\mathcal C$ is nonempty. It is also plainly convex. We now show
that $\mathcal C$ is compact in $\mathcal C\mathcal P_\delta(\ost,\M)$.

Let $\{\phi\}_{\alpha}$ be a net in $\mathcal C$. Because the norm of any completely positive map on an operator system is achieved at the order unit of the
operator system, we have that $\|\Phi\|=\delta$, for every $\Phi\in\mathcal C$; hence, 
$\mathcal C\subset \mathcal C\mathcal P_\delta(\ost,\M)$ and
$\{\phi\}_{\alpha}$ is a net in $\mathcal C\mathcal P_\delta(\ost,\M)$. By the compactness of $\mathcal C\mathcal P_\delta(\ost,\M)$, there exists a subnet 
$\{\phi_{\alpha_i}\}_i$ of $\{\phi\}_{\alpha}$ and a $\phi\in \mathcal C\mathcal P_\delta(\ost,\M)$ such that $\phi(x)$ is the limit, in the ultraweak topology of $\B(\H)$, of
the net $\{\phi_{\alpha_i}(x)\}_i$ of operators $\phi_{\alpha_i}(x)\in\M$. Because $\M$ is closed in the ultraweak topology of $\M$, the operator $\phi(x)$ must
belong to $\M$ for every $x$. Hence, the limiting map $\phi$ is an element of $\mathcal C$. Thus, because every net in $\mathcal C$ admits a convergent subnet, 
$\mathcal C$ is compact in the subspace topology of $\mathcal C\mathcal P_\delta(\ost,\M)$.

The point-ultraweak topology is a weak*-topology, and so the Krein-Milman Theorem applies to the compact convex set $\mathcal C$, which yields an
extreme point $\Phi$ of $\mathcal C$. 
Suppose that $\Phi=\Theta+\Omega$, for some nonzero
completely positive linear maps $\Psi,\Omega:\ost\rightarrow\M$. Thus, if
$\vartheta=\Theta_{\vert\osr}$ and $\omega=\Omega_{\vert\osr}$, then $\phi=\vartheta+\omega$; hence, by the purity of $\phi$, there are nonzero
$s,t\in[0,1]$ such that $\vartheta=s\phi$ and $\omega=t\phi$. Therefore, the completely positive linear maps 
$\frac{1}{s}\Theta$ and $\frac{1}{t}\Omega$ are completely positive extensions of $\phi$, making them elements of $\mathcal C$, 
and these maps satisfy
$s(\frac{1}{s}\Theta)+t(\frac{1}{t}\Omega)=\Phi$. Furthermore, as $\phi(x)=s\phi(x)+t\phi(x)=(s+t)\phi(x)$ for every $x\in\osr$, we
have that $s+t=1$. Hence, $\Phi$ is a convex combination of $\frac{1}{s}\Theta$ and $\frac{1}{t}\Omega$. Since $\Phi$ is an extreme point
of $\mathcal C$, we obtain $\Phi=\frac{1}{s}\Theta=\frac{1}{t}\Omega$, which proves that the extension $\Phi$ of $\phi$
is pure.
 \end{proof}
 
 A consequence of the proof of Proposition \ref{hb-thm} is the following result.
 
 \begin{proposition}
If $\osr\subseteq\ost$
is an inclusion of operator systems and if $\phi:\osr\rightarrow\osi$ is a pure completely positive linear
map of $\osr$ into an injective operator system $\osi$, then every extreme point $\Phi$ of the convex set of extensions of $\phi$ to $\ost$
is a pure completely positive linear map $\Phi:\ost\rightarrow\osi$.
\end{proposition}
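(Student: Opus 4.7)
The plan is to adapt the final paragraph of the proof of Proposition \ref{hb-thm}, which is the extremality-to-purity step and does not in fact require the codomain to be an injective factor. Let $\mathcal C$ be the set of all completely positive linear maps $\Psi:\ost\to\osi$ satisfying $\Psi_{\vert\osr}=\phi$. By the injectivity of $\osi$ the set $\mathcal C$ is nonempty, and it is visibly convex. Fix an extreme point $\Phi\in\mathcal C$.

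To establish the purity of $\Phi$, I would take any decomposition $\Phi=\Theta+\Omega$ with $\Theta,\Omega:\ost\to\osi$ completely positive and seek $s\in[0,1]$ for which $\Theta=s\Phi$ and $\Omega=(1-s)\Phi$. Restricting the decomposition to $\osr$ gives $\phi=\vartheta+\omega$ with $\vartheta=\Theta_{\vert\osr}$ and $\omega=\Omega_{\vert\osr}$. The purity of $\phi$ then supplies a scalar $s\in[0,1]$ with $\vartheta=s\phi$ and $\omega=(1-s)\phi$.

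When $s\in(0,1)$, each of $\frac{1}{s}\Theta$ and $\frac{1}{1-s}\Omega$ is a completely positive extension of $\phi$ to $\ost$, so both lie in $\mathcal C$, and $\Phi=s\left(\frac{1}{s}\Theta\right)+(1-s)\left(\frac{1}{1-s}\Omega\right)$ is a proper convex combination of elements of $\mathcal C$. Extremality of $\Phi$ then forces $\Phi=\frac{1}{s}\Theta=\frac{1}{1-s}\Omega$, equivalently $\Theta=s\Phi$ and $\Omega=(1-s)\Phi$, as required.

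The only subtlety worth flagging as an obstacle is the boundary case $s\in\{0,1\}$: one of $\Theta,\Omega$ then restricts to the zero map on $\osr$, so it annihilates the order unit $e_\osr$, which coincides with $e_\ost$ because $\osr\subseteq\ost$ is a unital inclusion. But any positive linear map $\ost\to\osi$ that kills the order unit is identically zero, since every selfadjoint $x\in\ost$ satisfies $-\|x\|e_\ost\le x\le\|x\|e_\ost$ and hence has image squeezed between $0$ and $0$. In this degenerate case the required equality $\Theta=s\Phi$, $\Omega=(1-s)\Phi$ holds trivially with $s\in\{0,1\}$, and the proof of purity is complete.
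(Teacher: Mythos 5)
Your argument is correct and is essentially the paper's own: the paper gives no separate proof of this proposition, stating only that it is a consequence of the proof of Proposition \ref{hb-thm}, and your argument is precisely the final paragraph of that proof (restriction, purity of $\phi$, rescaling into the convex set $\mathcal C$, extremality) carried over to a general injective codomain. The one point where you go beyond the paper is the boundary case $s\in\{0,1\}$, which you settle correctly by the order-unit squeeze $-\|x\|e_\ost\le x\le\|x\|e_\ost$, whereas the paper's version sidesteps it by taking $\Theta,\Omega$ nonzero and asserting without comment that the resulting scalars $s,t$ are nonzero.
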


%%%%%%%%%%%%%%%%%%%%%%%
\subsection{Operator systems from discrete groups}

Suppose that $\mathfrak u$ is a generating set for a discrete group $\lgG$. Considered as a subset of the (full) group C$^*$-algebra,
$\cstar(\lgG)$, each element of $\mathfrak u$ is unitary. Define $\oss(\mathfrak u)\subset\cstar(\lgG)$ by 
\[
\oss(\mathfrak u)=\mbox{Span}\left\{1,u,u^*\,|\,u\in\mathfrak u\right\},
\]
where $1$ denotes the multiplicative identity of the C$^*$-algebra $\cstar(\lgG)$.
Thus, $\oss(\mathfrak u)$ is an operator system in $\cstar(\lgG)$. 

\begin{definition} Let $u_1,u_2, u_3, \dots\,$ denote 
generators of the free group $\mathbb F_\infty$. 
For each $n\in\mathbb N$, let 
\[
\mathcal S_n=\mbox{\rm Span}\{1,u_k,u_k^*\,|\,k=1,\dots,n\}\subset\cstar(\mathbb F_n),
\]
where $\fn$ is the free group generated by $u_1,\dots,u_n$. The operator system $\oss_n$ is called the
\emph{operator system of the free group $\fn$}.
\end{definition}

Up to unital complete order isomorphism, the definition of $\oss_n$ is independent of the choice of generators of $\mathbb F_n$.
Moreover, it is clear that $\cstar(\oss_n)=\cstar(\fn)$, for every $n\in\mathbb N$.

\begin{definition}\label{nc defn} For each $n\in\mathbb N$ with $n\ge2$, let $v_1,\dots,v_n$ be generators of the group $*_1^n\mathbb Z_2$,
the free product of $n$ copies of $\mathbb Z_2$, and define 
\[
\mbox{\rm NC}(n)=\mbox{\rm Span}\{1,v_1,\dots,v_n\}\subset\cstar(*_1^n\mathbb Z_2).
\]
(Note that in $\cstar(*_1^n\mathbb Z_2)$ each $v_j$ is a selfadjoint unitary (i.e., $v_j^2=1$).) The operator system 
$\mbox{\rm NC}(n)$ is called the \emph{operator system of the noncommutative $n$-cube}.
\end{definition}
 
 The condition $n\ge2$ in Definition \ref{nc defn} is present only to justify the use of the term ``noncommutative cube". One can
 also define $\mbox{\rm NC}(1)$ in the obvious manner.
 
As with free groups, 
up to unital complete order isomorphism the definition of $\mbox{\rm NC}(n)$ is independent of the choice of generators of 
$*_1^n\mathbb Z_2$, and $\cstar(\mbox{\rm NC}(n))=\cstar(*_1^n\mathbb Z_2)$.  
 
The following result describes the C$^*$-envelope of operator systems arising from discrete groups.

\begin{theorem}\label{env gp}{\rm (\cite[Proposition 3.2]{farenick--kavruk--paulsen--todorov2014})}
If $\mathfrak u$ is a generating set for a discrete group $\lgG$, then
$\left(\cstar(\mbox{\rm G}), \tilde\iota\right)$ is a C$^*$-envelope for $\mathcal S(\mathfrak u)$, where
$\tilde\iota:\oss(\mathfrak u)\rightarrow \cstar(\lgG)$ is the canonical inclusion.
\end{theorem}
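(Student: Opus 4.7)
The plan is to invoke the Arveson-Davidson-Kennedy characterisation of the C$^*$-envelope via boundary representations (Theorem \ref{bndry thm}). Applying that theorem with $\osr = \oss(\mathfrak u)$ sitting inside $\A := \cstar(\lgG) = \cstar(\oss(\mathfrak u))$ yields $\cstare(\oss(\mathfrak u)) = \A/\mathfrak S$ with $\iota_e = q_{\vert\oss(\mathfrak u)}$, where
\[
\mathfrak S = \{a \in \A \,|\, \pi(a) = 0 \mbox{ for every boundary representation }\pi\mbox{ of }\oss(\mathfrak u)\}.
\]
It therefore suffices to show that $\mathfrak S = \{0\}$, for then $q$ is a $*$-isomorphism and $(\cstar(\lgG), \tilde\iota)$ itself serves as a C$^*$-envelope of $\oss(\mathfrak u)$.

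The key step will be to prove that every irreducible representation $\pi: \cstar(\lgG) \rightarrow \bof(\K)$ is automatically a boundary representation for $\oss(\mathfrak u)$. Given such a $\pi$, I would take an arbitrary ucp extension $\phi: \cstar(\lgG) \rightarrow \bof(\K)$ of $\pi_{\vert \oss(\mathfrak u)}$ and argue $\phi = \pi$. For each generator $u \in \mathfrak u$, $\phi(u) = \pi(u)$ is a unitary in $\bof(\K)$, so the Kadison-Schwarz inequality gives
\[
\phi(u)^*\phi(u) \;=\; 1 \;=\; \phi(1) \;=\; \phi(u^*u),
\]
and an analogous identity for $uu^*$. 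By Choi's multiplicative-domain theorem, each $u \in \mathfrak u$ lies in the multiplicative domain of $\phi$. The multiplicative domain is a C$^*$-subalgebra, and since $\mathfrak u$ generates $\cstar(\lgG)$, this forces $\phi$ to be a unital $*$-homomorphism. But then $\phi$ and $\pi$ are both $*$-homomorphisms agreeing on a generating set, so $\phi = \pi$.

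To conclude, observe that $\mathfrak S$ is then contained in $\bigcap_\pi \ker\pi$, intersected over all irreducible representations $\pi$ of $\cstar(\lgG)$. This intersection is $\{0\}$: by Hahn-Banach together with Krein-Milman, the pure states of any unital C$^*$-algebra separate its points, and the GNS representation of a pure state is irreducible. Hence $\mathfrak S = \{0\}$, and Theorem \ref{bndry thm} delivers the desired conclusion.

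The only substantive obstacle is the multiplicative-domain step; it is the single place where the operator-system structure of $\oss(\mathfrak u)$ does real work, relying critically on the joint presence of $1$ and the unitarity of each generator to force saturation of Kadison's inequality. Everything else is a routine packaging of Theorem \ref{bndry thm} with the semisimplicity of C$^*$-algebras.
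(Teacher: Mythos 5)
Your argument is correct, but it is worth noting that the paper offers no proof of Theorem \ref{env gp} at all: the result is simply imported from \cite[Proposition 3.2]{farenick--kavruk--paulsen--todorov2014}, whose original argument runs through the universal property of the full group C$^*$-algebra together with the rigidity of the injective envelope, rather than through boundary representations. Your route is therefore genuinely different and, in fact, proves something stronger: the multiplicative-domain step shows that \emph{every} unital $*$-representation of $\cstar(\lgG)$ (irreducible or not) is the unique ucp extension of its restriction to $\oss(\mathfrak u)$, i.e.\ that $\oss(\mathfrak u)$ is hyperrigid in $\cstar(\lgG)$ in Arveson's sense; this is precisely the mechanism the paper itself later borrows, via the citation to \cite[Lemma 9.3]{kavruk--paulsen--todorov--tomforde2013}, in the proof of Lemma \ref{kav}. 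All the individual steps check out: equality in the Schwarz inequality at each unitary generator $u$ and at $u^*$ places $u$ in the two-sided multiplicative domain, which is a unital C$^*$-subalgebra and hence all of $\cstar(\lgG)$ since $\mathfrak u$ generates; two $*$-homomorphisms agreeing on $\mathfrak u\cup\{1\}$ coincide; and the reduced atomic representation is faithful, so $\mathfrak S=\{0\}$. One small observation: you do not actually need the hard (existence) half of the Arveson--Davidson--Kennedy theorem, only the elementary fact that the kernel of a boundary representation contains the Shilov ideal; since you exhibit a separating family of boundary representations by hand, the Shilov ideal is forced to vanish and $\left(\cstar(\lgG),\tilde\iota\right)$ is the C$^*$-envelope. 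This makes your proof more self-contained than an appeal to Theorem \ref{bndry thm} suggests, and it avoids any worry about the separability hypotheses under which that theorem was originally established.
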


\begin{corollary} $\cstare(\oss_n)=\cstar(\fn)$ and $\cstare\left(\mbox{\rm NC}(n)\right)=\cstar(*_1^n \mathbb Z_2)$.
\end{corollary}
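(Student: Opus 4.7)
The plan is to derive the corollary as an immediate application of Theorem \ref{env gp}, since the two operator systems in question have already been defined as $\oss(\mathfrak u)$ for specific choices of discrete group $\lgG$ and generating set $\mathfrak u$. The only thing to verify is that the generating sets used in the definitions of $\oss_n$ and $\mbox{\rm NC}(n)$ really do generate the groups $\fn$ and $*_1^n\mathbb Z_2$ as discrete groups, which is true by construction.

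Concretely, I would proceed as follows. For the first equality, I would take $\lgG = \fn$ and $\mathfrak u = \{u_1,\dots,u_n\}$, the free generators of $\fn$. Then by the definition of $\oss_n$ we have $\oss_n = \mbox{Span}\{1,u_k,u_k^*\,|\,k=1,\dots,n\} = \oss(\mathfrak u)$, and Theorem \ref{env gp} immediately gives that $\cstar(\fn)$, together with the canonical inclusion, is a C$^*$-envelope of $\oss_n$. By the uniqueness clause in Theorem \ref{hamana}, we conclude $\cstare(\oss_n) = \cstar(\fn)$.

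For the second equality the argument is identical: take $\lgG = *_1^n\mathbb Z_2$ and $\mathfrak u = \{v_1,\dots,v_n\}$, the selfadjoint unitary generators from Definition \ref{nc defn}. Since $v_j = v_j^*$, the span $\mbox{Span}\{1,v_j,v_j^*\} = \mbox{Span}\{1,v_j\}$, so $\mbox{\rm NC}(n) = \oss(\mathfrak u)$ as defined. Again Theorem \ref{env gp} supplies $\cstare(\mbox{\rm NC}(n)) = \cstar(*_1^n\mathbb Z_2)$.

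There is no real obstacle here; the corollary is a formal specialization of Theorem \ref{env gp}. The only minor point worth spelling out is the observation in the $\mbox{\rm NC}(n)$ case that the selfadjointness of the generators $v_j$ makes the spanning set coincide with $\oss(\mathfrak u)$ despite the slightly different-looking presentation in Definition \ref{nc defn}.
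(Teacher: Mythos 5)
Your proof is correct and is exactly the argument the paper intends: the corollary is stated in the paper without proof, as an immediate specialization of Theorem \ref{env gp} to the generating sets $\{u_1,\dots,u_n\}$ of $\fn$ and $\{v_1,\dots,v_n\}$ of $*_1^n\mathbb Z_2$. Your remark that selfadjointness of the $v_j$ makes $\mbox{\rm Span}\{1,v_j,v_j^*\}=\mbox{\rm Span}\{1,v_j\}$, so that $\mbox{\rm NC}(n)=\oss(\mathfrak u)$, is the right detail to check.
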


It is somewhat remarkable that the operator systems $\oss_n$ and
$\mbox{\rm NC}(n)$ are operator system quotients of operator systems of matrices \cite{farenick--kavruk--paulsen--todorov2014,farenick--paulsen2012}. As a consequence, the 
dual operator systems $\oss_n^d$ and
$\mbox{\rm NC}(n)^d$ can be realised by operator systems of matrices, which is crucial for our analysis of the purity of identity maps on
the operator systems $\oss_n$ and $\mbox{\rm NC}(n)$.

\begin{theorem}\label{duals} For every choice of order unit for each of the operator system duals $\oss_n^d$ and
$\mbox{\rm NC}(n)^d$,
\begin{enumerate}
\item {\rm (\cite[Theorem 4.5]{farenick--paulsen2012})} 
the operator system dual $\oss_n^d$ of $\oss_n$ is completely order isomorphic to
the operator system $\mathcal X_n$ of $2n\times 2n$ matrices defined by
\[
\mathcal X_n=\left\{\bigoplus_{k=1}^n\left[\begin{array}{cc} \alpha & \beta_k \\ \gamma_k & \alpha\end{array}\right]\,|\,
\alpha,\beta_k,\gamma_k\in\mathbb C,\,k=1,\dots,n\right\},
\]
and
\item {\rm (\cite[Proposition 6.1]{farenick--kavruk--paulsen--todorov2014})} 
the operator system dual $\mbox{\rm NC}(n)^d$ of $\mbox{\rm NC}(n)$ 
is completely order isomorphic to
the operator system $\mathcal Y_n$ of $2n\times 2n$ matrices defined by
\[
\mathcal Y_n=\left\{\bigoplus_{k=1}^n\left[\begin{array}{cc} \alpha & \beta_k \\ \beta_k & \alpha\end{array}\right]\,|\,
\alpha,\beta_k \in\mathbb C,\,k=1,\dots,n\right\}.
\]
\end{enumerate}
\end{theorem}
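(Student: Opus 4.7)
The plan is to realise each dual operator system concretely by encoding a functional through its values on the canonical generators. A functional $\phi\in\oss_n^d$ is determined by the $2n+1$ scalars $\phi(1)$ and $\phi(u_k),\phi(u_k^*)$ for $k=1,\dots,n$, so the assignment
\[
\Theta:\phi\longmapsto \bigoplus_{k=1}^n \begin{bmatrix} \phi(1) & \phi(u_k^*) \\ \phi(u_k) & \phi(1) \end{bmatrix}
\]
is a $\mathbb C$-linear, $*$-preserving bijection from $\oss_n^d$ onto $\mathcal X_n$. An analogous map $\Theta'$ sends $\psi\in\mbox{\rm NC}(n)^d$ to $\bigoplus_{k=1}^n\left[\begin{smallmatrix} \psi(1) & \psi(v_k) \\ \psi(v_k) & \psi(1) \end{smallmatrix}\right]$, where both off-diagonal entries coincide because $v_k=v_k^*$. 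Since the matrix cones on the dual do not depend on the choice of Archimedean order unit, it suffices to show that $\Theta$ and $\Theta'$ preserve the matrix cones in both directions.

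Next I would characterise matrix positivity on each dual. By definition, a matrix $G=[\gamma_{ij}]_{i,j=1}^m\in\M_m(\oss_n^d)$ lies in the positive cone exactly when the associated linear map $\hat G:\oss_n\to\M_m(\mathbb C)$ is completely positive. Because $\oss_n$ is the universal operator system for $n$-tuples of contractions, $\hat G$ is completely positive if and only if $[\gamma_{ij}(1)]\geq 0$ and, for each $k$, the block matrix
\[
\begin{bmatrix} [\gamma_{ij}(1)] & [\gamma_{ij}(u_k^*)] \\ [\gamma_{ij}(u_k)] & [\gamma_{ij}(1)] \end{bmatrix}
\]
is positive in $\M_{2m}(\mathbb C)$, the latter being the classical dilation criterion for $[\gamma_{ij}(u_k)]$ to be a contraction relative to $[\gamma_{ij}(1)]$. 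For $\mbox{\rm NC}(n)$, the universal property for selfadjoint contractions yields the same $2\times 2$ dilation condition, with the selfadjoint matrix $[\gamma_{ij}(v_k)]$ occupying both off-diagonal corners.

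Third, I would match these conditions with the concrete matrix orderings on $\mathcal X_n$ and $\mathcal Y_n$ inherited from their inclusion in $\M_{2n}(\mathbb C)$. Viewed inside $\M_m\otimes\M_{2n}\simeq\M_{2mn}(\mathbb C)$, a matrix over $\mathcal X_n$ is, after a permutation of coordinates, a block-diagonal matrix with $n$ summands of size $2m\times 2m$, and its positivity reduces to positivity of each summand separately. Each such summand is precisely the $2\times 2$ block matrix displayed above under the identification supplied by $\Theta$, so $\Theta$ is completely positive together with its inverse, and similarly for $\Theta'$.

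The main obstacle is the reduction carried out in step two: one must show that complete positivity of $\hat G$ on $\oss_n$ is equivalent to the $n$ independent $2\times 2$ dilation conditions, with no additional constraints coupling distinct generators. This is handled by invoking the free product structure of $\fn$: given any $n$-tuple of contractions $T_1,\dots,T_n\in\M_m(\mathbb C)$, one applies Sz.-Nagy dilation to each $T_k$ independently to produce unitaries on a common larger Hilbert space, and the absence of algebraic relations among distinct generators of $\fn$ yields a $*$-representation of $\cstar(\fn)$ whose compression realises $\hat G$ via Stinespring. The same argument, with selfadjoint dilations in place of unitary dilations, handles $\mbox{\rm NC}(n)$ and $*_1^n\mathbb Z_2$, and it is exactly this independence that is reflected in the block-diagonal structure of $\mathcal X_n$ and $\mathcal Y_n$.
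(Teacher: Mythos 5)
The paper itself offers no proof of Theorem \ref{duals}: both parts are quoted with citations, and the paragraph preceding the theorem indicates that the cited sources obtain these identifications by dualising the realisations of $\oss_n$ and $\mbox{\rm NC}(n)$ as operator system quotients of matrix operator systems. Your argument is a direct verification that bypasses the quotient machinery, and in outline it is correct: the linear bijection $\Theta$ is the right one, the shuffle $\M_m(\mathcal X_n)\subseteq\M_m\otimes\M_{2n}\cong\M_{2n}\otimes\M_m$ correctly identifies $\M_m(\mathcal X_n)_+$ with the simultaneous positivity of the $n$ blocks $\left[\begin{smallmatrix}[\gamma_{ij}(1)] & [\gamma_{ij}(u_k^*)]\\ [\gamma_{ij}(u_k)] & [\gamma_{ij}(1)]\end{smallmatrix}\right]$, and the ``only if'' half of your dilation criterion follows at every matrix level by applying $\hat G^{(2)}$ to $\left[\begin{smallmatrix}1 & u_k^*\\ u_k & 1\end{smallmatrix}\right]=v^*v\geq 0$ (note here that selfadjointness of $G$ gives $[\gamma_{ij}(u_k^*)]=[\gamma_{ij}(u_k)]^*$, so your blocks are genuinely selfadjoint). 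The one step you should not wave at is the ``if'' direction when $A=[\gamma_{ij}(1)]$ is not the identity: the universal property of $\oss_n$ and $\mbox{\rm NC}(n)$ (Theorem \ref{univ prop}) produces \emph{unital} completely positive maps, so you must first reduce to the unital case by conjugating with $A^{-1/2}$ when $A$ is invertible, and handle singular $A$ by replacing $A$ with $A+\varepsilon I$ (which preserves the block positivity) and letting $\varepsilon\to 0$, using that the cone of completely positive maps into $\M_m(\mathbb C)$ is closed under pointwise limits. With that normalisation supplied, your Sz.-Nagy/freeness argument is exactly the standard proof of the universal property and the verification is complete; the payoff of your route is a self-contained, elementary proof where the paper simply defers to the literature, at the cost of re-proving the universal property rather than exploiting the quotient--subsystem duality that makes the cited proofs short.
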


To conclude, we make explicit note of the earlier-mentioned universal properties of $\oss_n$ and $\mbox{\rm NC}(n)$.

\begin{theorem}\label{univ prop} Let $y_1,\dots,y_n\in\B(\H)$ be arbitrary contractions, for $n\in\mathbb N$, and let $h_1,\dots,h_m\in\B(\H)$
be arbitrary selfadjoint contractions, for $m\in\mathbb N$ with $m\geq2$. There exist unital completely positive linear maps
$\phi:\oss_n\rightarrow\B(\H)$ and $\psi:\mbox{\rm NC}(m)\rightarrow\B(\H)$ such that
\[
\phi(u_j)=y_j, \mbox{ for every }j, \mbox{ and } \psi(v_k)=h_k, \mbox{ for every }k.
\]
\end{theorem}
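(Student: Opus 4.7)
The plan is to reduce both statements to the universal property of the relevant full group C$^*$-algebra via a dilation-and-compression argument. In each case, one dilates the given contractions to unitaries of the appropriate type (unitary or selfadjoint unitary) on a common enlargement of $\H$, then uses the universal property of the group C$^*$-algebra to obtain a $*$-representation, and finally compresses to $\H$ to produce the desired ucp map.

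For $\oss_n$, given contractions $y_1,\dots,y_n\in\B(\H)$, set $\K=\H\oplus\H$ and for each $j$ let
\[
U_j=\begin{pmatrix} y_j & (1-y_jy_j^*)^{1/2}\\ (1-y_j^*y_j)^{1/2} & -y_j^*\end{pmatrix}\in\B(\K),
\]
which is the standard Halmos unitary dilation of $y_j$; a direct computation shows $U_j$ is unitary and $P_\H U_j\vert_\H=y_j$. Since $\fn$ is free on $u_1,\dots,u_n$ and since the full group C$^*$-algebra $\cstar(\fn)$ enjoys the universal property that any assignment of the generators to unitaries on a Hilbert space extends uniquely to a unital $*$-representation, there is a unital $*$-representation $\pi:\cstar(\fn)\rightarrow\B(\K)$ with $\pi(u_j)=U_j$ for all $j$. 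Define $\phi:\oss_n\rightarrow\B(\H)$ by $\phi(x)=P_\H\pi(x)\vert_\H$. As the compression of a unital $*$-homomorphism by a projection, $\phi$ is ucp, and by construction $\phi(u_j)=P_\H U_j\vert_\H = y_j$ (and therefore also $\phi(u_j^*)=y_j^*$).

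For $\mbox{\rm NC}(m)$, given selfadjoint contractions $h_1,\dots,h_m\in\B(\H)$, use the Julia-type symmetry dilation: on $\K=\H\oplus\H$, set
\[
V_k=\begin{pmatrix} h_k & (1-h_k^2)^{1/2}\\ (1-h_k^2)^{1/2} & -h_k\end{pmatrix}.
\]
A routine check gives $V_k=V_k^*$ and $V_k^2=1_\K$, so each $V_k$ is a selfadjoint unitary, and $P_\H V_k\vert_\H = h_k$. Since $*_1^m\mathbb Z_2$ is the free product of $m$ copies of $\mathbb Z_2$, its full C$^*$-algebra $\cstar(*_1^m\mathbb Z_2)$ is universal among unital C$^*$-algebras generated by $m$ selfadjoint unitaries, and so there is a unital $*$-representation $\varrho:\cstar(*_1^m\mathbb Z_2)\rightarrow\B(\K)$ with $\varrho(v_k)=V_k$ for every $k$. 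Setting $\psi(x)=P_\H\varrho(x)\vert_\H$ yields a ucp map $\psi:\mbox{\rm NC}(m)\rightarrow\B(\H)$ with $\psi(v_k)=h_k$.

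There is no real obstacle here: the content of the theorem is essentially the combination of two classical facts, namely Sz.-Nagy/Halmos dilation of contractions (and of selfadjoint contractions to symmetries) together with the universal property of the full group C$^*$-algebra of a free or free-product group. The only point requiring care is ensuring that the dilations of the individual contractions are realised on a \emph{common} Hilbert space; but this is immediate from the explicit $2\times2$ matrix formulas above, since they all live in $\B(\H\oplus\H)$. Compression by the projection $P_\H$ then automatically produces unital completely positive maps, and the values on the generators are exactly the given operators.
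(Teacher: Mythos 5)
Your proof is correct and is essentially the argument behind the paper's own treatment: the paper offers no proof of Theorem \ref{univ prop}, deferring to \cite[Proposition 9.7]{kavruk--paulsen--todorov--tomforde2013} and \cite[Proposition 6.5]{farenick--kavruk--paulsen--todorov2014}, which rest on exactly this combination of the Halmos/Julia $2\times 2$ dilations with the universal property of the full group C$^*$-algebra of $\fn$ and $*_1^m\mathbb Z_2$, followed by compression to $\H$. The only step you leave implicit is the intertwining identity $y_j(1-y_j^*y_j)^{1/2}=(1-y_jy_j^*)^{1/2}y_j$ needed to verify that $U_j$ is unitary, which is routine.
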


Theorem \ref{univ prop} is proved in \cite[Proposition 9.7]{kavruk--paulsen--todorov--tomforde2013} and
\cite[Proposition 6.5]{farenick--kavruk--paulsen--todorov2014}.

%%%%%%%%%%%%%%%%%%%%
\section{Embedding Operator Systems into their Injective Envelopes}

The main result of this section, Theorem \ref{pure ie},
is a determination of the purity of the embedding
of an arbitrary operator system $\osr$ into its injective envelope by way of the C$^*$-envelope of $\osr$, thereby answering
question (Q3).

If $\B$ is an injective C$^*$-algebra, then $\B$ is a monotone complete C$^*$-algebra and, hence, an AW$^*$-algebra
\cite{Saito--Wright-book}. By the seminal work of Hamana \cite{hamana1981,Saito--Wright-book}, 
every unital C$^*$-algebra $\A$ has a regular monotone completion
$\overline A$ such that $\overline\A$ is a unital C$^*$-subalgebra of its injective envelope $\mbox{\rm I}(\A)$
(when $\mbox{\rm I}(\A)$ is considered in its guise as an injective C$^*$-algebra). Moreover, for any operator system $\osr$,
the following system of unital complete order embeddings holds:
\begin{equation}\label{hamana e}
\osr\subseteq\cstare(\osr)\subseteq  \overline{\cstare(\osr)} \subseteq \mbox{\rm I}(\osr).
\end{equation}

%We say that
%$\B$ is an AW$^*$-factor if the centre of $\B$ is trivial.

\begin{lemma}\label{factor lemma} If $\B$ is an AW$^*$-factor, then the identity map $\iota:\B\rightarrow\B$ is a pure element of
$\mathcal C\mathcal P(\B)$.
\end{lemma}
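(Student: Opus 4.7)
The plan is to prove that any decomposition $\iota = \vartheta + \omega$ with $\vartheta,\omega\in\mathcal C\mathcal P(\B)$ must satisfy $\vartheta = s\iota$ and $\omega = (1-s)\iota$ for some $s\in[0,1]$. The driving observation is that the Kadison--Schwarz inequality is an \emph{equality} for the $*$-homomorphism $\iota$, namely $\iota(x^*x) = x^*x = \iota(x)^*\iota(x)$, so when it is written as the sum of Schwarz inequalities for $\vartheta$ and $\omega$ the two must degenerate simultaneously to equalities. Equality will pin down $\vartheta(x)$ as left multiplication by $\vartheta(1)$, after which the AW$^*$-factor hypothesis supplies the scalar.

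To make the Schwarz--Schur-complement form usable I would first perturb so that $\vartheta(1)$ and $\omega(1)$ become invertible. For each $\epsilon>0$, set
\[
\vartheta_\epsilon = \frac{\vartheta+\epsilon\iota}{1+2\epsilon},\qquad \omega_\epsilon = \frac{\omega+\epsilon\iota}{1+2\epsilon};
\]
both are completely positive, $\vartheta_\epsilon+\omega_\epsilon=\iota$, and $\alpha := \vartheta_\epsilon(1)$ satisfies $\alpha\geq\tfrac{\epsilon}{1+2\epsilon}\cdot 1$ and $1-\alpha=\omega_\epsilon(1)\geq\tfrac{\epsilon}{1+2\epsilon}\cdot 1$, so both $\alpha$ and $1-\alpha$ are positive invertibles in $\B$. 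Applying $\vartheta_\epsilon^{(2)}$ to the positive rank-one element $\binom{1}{x^*}(1,\,x)\in M_2(\B)$ and taking the Schur complement (using invertibility of $\alpha$) gives
\[
\vartheta_\epsilon(x^*x) \geq \vartheta_\epsilon(x)^*\alpha^{-1}\vartheta_\epsilon(x),
\]
and the analogous inequality with $(1-\alpha)^{-1}$ holds for $\omega_\epsilon$.

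Setting $y=\vartheta_\epsilon(x)$ and summing the two inequalities yields $x^*x \geq y^*\alpha^{-1}y + (x-y)^*(1-\alpha)^{-1}(x-y)$. Since $\alpha$ commutes with $1-\alpha$, the operator identity $\alpha^{-1}+(1-\alpha)^{-1}=[\alpha(1-\alpha)]^{-1}$ holds, and a direct expansion rewrites the right-hand side as $x^*x + (y-\alpha x)^*[\alpha(1-\alpha)]^{-1}(y-\alpha x)$; hence
\[
(y-\alpha x)^*[\alpha(1-\alpha)]^{-1}(y-\alpha x) \leq 0.
\]
Positivity of $[\alpha(1-\alpha)]^{-1}$ forces $y=\alpha x$, so $\vartheta_\epsilon(x)=\alpha x$ for every $x\in\B$. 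As $\vartheta_\epsilon$ is positive, left multiplication by $\alpha$ is a positive linear map on $\B$; this requires $\alpha y$ to be self-adjoint for every self-adjoint $y$, forcing $\alpha$ to commute with every element of $\B$, so $\alpha\in\mathrm Z(\B)$. The AW$^*$-factor hypothesis $\mathrm Z(\B)=\mathbb C\cdot 1$ then gives $\alpha = s_\epsilon\cdot 1$ with $s_\epsilon\in[0,1]$. Undoing the perturbation, $\vartheta(x)=(1+2\epsilon)s_\epsilon x-\epsilon x=[(1+2\epsilon)s_\epsilon-\epsilon]\,x$, so $\vartheta = s\iota$ with $s = (1+2\epsilon)s_\epsilon-\epsilon\in[0,1]$, and $\omega=(1-s)\iota$.

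The main technical step is the algebraic identity rewriting the sum of Schwarz inequalities as $x^*x + (y-\alpha x)^*[\alpha(1-\alpha)]^{-1}(y-\alpha x)$; this manipulation uses only the commutativity of $\alpha$ with $1-\alpha$, but it depends essentially on the invertibility of both, which is precisely what the $\epsilon$-perturbation supplies. The factor hypothesis itself enters only in the final line, to promote a positive central contraction to a scalar; without it, the same argument would merely show that $\vartheta$ is left multiplication by some positive central contraction in $\mathrm Z(\B)$.
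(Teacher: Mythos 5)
Your proof is correct, and it takes a genuinely different route from the paper's. The paper gets the result in two lines from heavy machinery: it cites Sait\^o--Wright to the effect that every AW$^*$-factor is a \emph{primitive} C$^*$-algebra, realises $\mathcal{B}$ as an irreducible subalgebra of some $\mathcal{B}(\mathcal{H})$, and then invokes Arveson's Radon--Nikod\'ym criterion (an irreducible representation is pure in $\mathcal C\mathcal P(\mathcal{B},\mathcal{B}(\mathcal{H}))$, hence a fortiori in $\mathcal C\mathcal P(\mathcal{B})$). You instead argue intrinsically: the $\epsilon$-perturbation to make $\vartheta_\epsilon(1)$ and $\omega_\epsilon(1)$ invertible, the Schur-complement form $\vartheta_\epsilon(x^*x)\geq\vartheta_\epsilon(x)^*\alpha^{-1}\vartheta_\epsilon(x)$ of the generalised Schwarz inequality, and the completion-of-the-square identity (which I have checked; it needs only that $\alpha$ commutes with $1-\alpha$, not with $x$ or $y$) together force $\vartheta_\epsilon(x)=\alpha x$ with $\alpha$ central, after which triviality of the centre finishes. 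Each approach buys something: the paper's is shorter modulo the literature and stays within the Arveson framework used throughout; yours is elementary and self-contained, avoids the nontrivial primitivity theorem for AW$^*$-factors, and actually proves more --- on \emph{any} unital C$^*$-algebra the completely positive maps dominated by the identity are exactly the maps $x\mapsto cx$ for central positive contractions $c$, so the identity map is pure if and only if the centre is trivial, giving a clean converse to the paper's Proposition \ref{trivial centre} that needs no AW$^*$ hypothesis at all. The only cosmetic quibble is that at the step ``positivity of $[\alpha(1-\alpha)]^{-1}$ forces $y=\alpha x$'' you also use its invertibility (positivity alone gives $[\alpha(1-\alpha)]^{-1/2}(y-\alpha x)=0$), but since it is defined as an inverse this is immediate.
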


\begin{proof} Every AW$^*$-factor is a primitive C$^*$-algebra \cite{saito--wright2006}; hence, without loss of generality we may assume that
$\B$ is an irreducible C$^*$-algebra of operators acting on some Hilbert space $\H$. The identity map
$\iota:\B\rightarrow\B$  is, therefore, an irreducible representation of
$\B$ on $\H$; thus, by Arveson's criterion \cite[Corollary 1.4.3]{arveson1969}, $\iota$ is a pure element of $\mathcal C\mathcal P\left(\B,\B(\H)\right)$.

Suppose now that $\vartheta,\omega\in\mathcal C\mathcal P(\B)$ are such that $\vartheta+\omega=\iota$. 
Considering $\vartheta$ and $\omega$ as elements of $\mathcal C\mathcal P\left(\B,\B(\H)\right)$, the purity of $\iota$ in 
$\mathcal C\mathcal P\left(\B,\B(\H)\right)$ and
the equation
$\vartheta+\omega=\iota$ imply that $\vartheta=s\iota$ and $\omega=(1-s)\iota$ for some $s\in[0,1]$. Hence, it is also true that
$\iota:\B\rightarrow\B$ is a pure element of
$\mathcal C\mathcal P(\B)$.
\end{proof}

\begin{theorem}\label{pure ie} The following statements are equivalent for the canonical unital complete order embedding  
$\iota_{\rm ie}:\osr\rightarrow \mbox{\rm I}(\osr)$:
\begin{enumerate}
\item $\iota_{\rm ie}$ is pure in the cone $\mathcal C\mathcal P\left(\osr, \mbox{\rm I}(\osr)\right)$;
\item the C$^*$-algebra $\cstare(\osr)$ is prime.
\end{enumerate}
\end{theorem}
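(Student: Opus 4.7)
The plan is to route the equivalence through the bridge principle, due to Hamana, that $\cstare(\osr)$ is prime if and only if the injective C$^*$-algebra $\mbox{\rm I}(\osr)$ (equipped with the Choi-Effros product) is an AW$^*$-factor. Granted this bridge, the two implications follow from Lemma \ref{factor lemma} and the Rigidity part of Theorem \ref{hamana op sys}.

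For the direction $(2) \Rightarrow (1)$, assume $\cstare(\osr)$ is prime, so $\mbox{\rm I}(\osr)$ is an AW$^*$-factor. Suppose $\iota_{\rm ie} = \vartheta + \omega$ for some $\vartheta, \omega \in \mathcal C\mathcal P(\osr, \mbox{\rm I}(\osr))$. Injectivity of $\mbox{\rm I}(\osr)$ yields completely positive extensions $\tilde\vartheta, \tilde\omega: \mbox{\rm I}(\osr) \to \mbox{\rm I}(\osr)$ of $\vartheta$ and $\omega$. Their sum $\tilde\vartheta + \tilde\omega$ is a completely positive self-map of $\mbox{\rm I}(\osr)$ which agrees with $\iota_{\rm ie}$ on $\osr$, so Hamana rigidity (Theorem \ref{hamana op sys}(2)) forces $\tilde\vartheta + \tilde\omega$ to be the identity on $\mbox{\rm I}(\osr)$. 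By Lemma \ref{factor lemma}, this identity is pure in $\mathcal C\mathcal P(\mbox{\rm I}(\osr))$, hence $\tilde\vartheta = s\cdot\mbox{id}_{\mbox{\rm I}(\osr)}$ for some $s \in [0,1]$; restriction along $\iota_{\rm ie}$ gives $\vartheta = s\iota_{\rm ie}$ and $\omega = (1-s)\iota_{\rm ie}$.

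For $(1) \Rightarrow (2)$, I argue by contrapositive. If $\cstare(\osr)$ is not prime, the bridge principle supplies a central projection $p \in \mbox{\rm I}(\osr)$ with $p \neq 0, 1$. Define
\[
\vartheta(x) = p \circ \iota_{\rm ie}(x), \qquad \omega(x) = (1-p) \circ \iota_{\rm ie}(x),
\]
where $\circ$ is the Choi-Effros product. Each map is completely positive since multiplication by a central projection in a C$^*$-algebra is a $*$-homomorphism onto a corner, and $\vartheta + \omega = \iota_{\rm ie}$ by bilinearity of the product. If $\iota_{\rm ie}$ were pure, then $\vartheta = s\iota_{\rm ie}$ for some $s \in [0,1]$; evaluating at $x = e_\osr$ yields $p = s \cdot 1$, contradicting that $p$ is a non-trivial projection. (This is in the same spirit as Proposition \ref{trivial centre}, though phrased at the level of $\mbox{\rm I}(\osr)$ rather than the codomain of a ucp map.)

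The main obstacle is the bridge principle itself. The direction "$\mbox{\rm I}(\osr)$ is a factor $\Rightarrow$ $\cstare(\osr)$ is prime" admits a short direct argument: given non-zero closed ideals $I, J \subseteq \cstare(\osr)$ with $IJ = 0$, one forms the central covers of $I$ and $J$ in the AW$^*$-algebra $\mbox{\rm I}(\osr)$, which are orthogonal non-zero central projections and hence cannot both be $0$ or $1$. The converse uses the identifications $\mbox{\rm I}(\osr) = \mbox{\rm I}(\cstare(\osr)) = \mbox{\rm I}\bigl(\overline{\cstare(\osr)}\bigr)$ (consistent with the chain \eqref{hamana e}) and Hamana's theorem that the regular monotone completion of a prime C$^*$-algebra is an AW$^*$-factor, together with the fact that forming the injective envelope preserves factoriality in this setting. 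This technical equivalence is where essentially all of the AW$^*$/injectivity machinery is used, and it would be handled either by direct citation to Hamana \cite{hamana1979b,hamana1981} or by including the central-cover argument explicitly.
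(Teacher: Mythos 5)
Your proposal is correct and follows essentially the same route as the paper: both directions hinge on Hamana's theorem linking primeness of $\cstare(\osr)$ to factoriality of the monotone completion/injective envelope, combined for $(2)\Rightarrow(1)$ with injective extension, rigidity, and Lemma \ref{factor lemma}, and for $(1)\Rightarrow(2)$ with a splitting of $\iota_{\rm ie}$ by a nontrivial central element. The only (immaterial) difference is that the paper finds that central element in the regular monotone completion $\overline{\cstare(\osr)}$ and applies Proposition \ref{trivial centre} there before passing to $\mbox{\rm I}(\osr)$ via the inclusion chain \eqref{hamana e}, whereas you use a central projection of $\mbox{\rm I}(\osr)$ itself; both rest on the same citation to Hamana.
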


\begin{proof} To prove that (1) implies (2), we shall prove that $\iota_{\rm ie}$ is not pure if $\cstare(\osr)$ is not prime. To this end, let $\A=\cstare(\osr)$,
which we assume to be nonprime. The regular monotone completion of any nonprime C$^*$-algebra has nontrivial centre \cite[Theorem 7.1]{hamana1981}. 
Thus, the ucp map
$\iota_{\rm ie}:\osr\rightarrow\overline A$ is not pure in the cone $\mathcal C\mathcal P(\osr,\overline A)$,
by Proposition \ref{trivial centre}. Hence, via the system of inclusions (\ref{hamana e}) above,
$\iota_{\rm ie}$ is not pure in the cone $\mathcal C\mathcal P\left(\osr, \mbox{\rm I}(\osr)\right)$.

Conversely, to prove that (2) implies (1), assume that $\cstare(\osr)$ is a prime C$^*$-algebra. Thus, $\mbox{\rm I}\left(\cstare(\osr)\right)$ is (unitally completely
order isomorphic to)
an injective AW$^*$-factor \cite[Theorem 7.1]{hamana1981}. Suppose that $\vartheta,\omega:\osr\rightarrow\mbox{\rm I}(\osr)$ are completely positive
linear maps such that $\vartheta+\omega=\iota_{\rm ie}$. By the injectivity of $\mbox{\rm I}(\osr)$, there exists completely positive extensions $\Theta$
and $\Omega$ of $\vartheta$ and $\omega$, respectively, from $\osr$ to $\mbox{\rm I}(\osr)$. Thus, $\Theta+\Omega$ is a ucp map
on $\mbox{\rm I}(\osr)$ for which $(\Theta+\Omega)[x]=x$, for every $x\in\osr$. By the rigidity property of the injective envelope \cite[Lemma 3.6]{hamana1979b}, 
it is necessarily 
the case that $(\Theta+\Omega)[z]=z$, for every $z\in \mbox{\rm I}(\osr)$. However, as $\mbox{\rm I}(\osr)$ is an AW$^*$-factor, 
the identity map on $\mbox{\rm I}(\osr)$ is pure, by Lemma \ref{factor lemma}; thus, $\Theta$ and $\Omega$ are scalar multiples of the identity map
on $\mbox{\rm I}(\osr)$, implying that $\vartheta$ and $\omega$ are scalar multiples of $\iota_{\rm ie}$. Hence, 
$\iota_{\rm ie}$ is pure in the cone $\mathcal C\mathcal P\left(\osr, \mbox{\rm I}(\osr)\right)$.
 \end{proof}

\begin{corollary} The embedding of $\oss_n$ into its injective envelope is pure for every $n\geq 2$.
\end{corollary}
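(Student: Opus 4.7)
The plan is to invoke Theorem~\ref{pure ie}: the embedding $\iota_{\rm ie}:\oss_n\to\mbox{\rm I}(\oss_n)$ is pure in $\mathcal C\mathcal P(\oss_n,\mbox{\rm I}(\oss_n))$ if and only if the C$^*$-envelope $\cstare(\oss_n)$ is a prime C$^*$-algebra. By the corollary following Theorem~\ref{env gp} we have the identification $\cstare(\oss_n)=\cstar(\fn)$, the full group C$^*$-algebra of the rank-$n$ free group. Thus the entire task reduces to verifying that $\cstar(\fn)$ is prime for every $n\ge 2$, and then the corollary drops out of the main equivalence for free.

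To establish primality of $\cstar(\fn)$, I would argue as follows. The C$^*$-algebra $\cstar(\fn)$ is separable, so by Dixmier's theorem, primality is equivalent to primitivity; it therefore suffices to exhibit a single faithful irreducible $*$-representation of $\cstar(\fn)$. For $n\ge 2$ the group $\fn$ is non-abelian (it contains a copy of $\ftwo$) and is ICC, and the existence of such a faithful irreducible representation of the full C$^*$-algebra of a non-abelian free group is a known structural fact from the representation theory of free-group C$^*$-algebras. Equivalently, one can check that any two nonzero closed two-sided ideals of $\cstar(\fn)$ meet nontrivially, which can be phrased in terms of the primitive ideal space of $\cstar(\fn)$.

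The main obstacle is the primality input itself: Theorem~\ref{pure ie} does all of the operator-system work, but the primality of the full free-group C$^*$-algebra lies outside the purity/boundary-representation machinery developed in the paper and must be imported as a separate fact. Once primality of $\cstar(\fn)$ is in hand, the corollary is an immediate consequence of the equivalence in Theorem~\ref{pure ie} applied with $\osr=\oss_n$.
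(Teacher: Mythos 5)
Your proposal is correct and follows essentially the same route as the paper: reduce via Theorem~\ref{pure ie} to primeness of $\cstare(\oss_n)=\cstar(\fn)$, and obtain that from the primitivity of the full free-group C$^*$-algebra (the paper cites Choi's theorem \cite{choi1980} for this input, which you describe as a ``known structural fact''). Note that only the trivial direction (primitive $\Rightarrow$ prime) is needed, so the appeal to Dixmier's equivalence for separable C$^*$-algebras is unnecessary.
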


\begin{proof} The C$^*$-envelope of $\oss_n$ is $\cstar(\fn)$, which is primitive (and, hence prime) for every $n\geq2$
\cite{choi1980}. Thus, Theorem \ref{pure ie} yields the purity of the embedding $\iota_{\rm ie}$.
\end{proof}

%%%%%%%%%%%%%%%%%%%%%%%%%%%%%%%%%%%%%%%%
\section{Embedding Discrete-Group Operator Systems into their C$^*$-envelopes}

This section answers question (Q2) for
operator systems from discrete groups.

\begin{lemma}\label{kav} Suppose 
that $\osr$ is an operator 
subsystem of a unital C$^*$-algebra $\A$ for which $\A=\cstar(\osr)$, and that $\osr$
contains a set $\mathfrak u$ of unitary elements of $\A$ that generate $\A$ as a C$^*$-algebra. Suppose that
$\B$ is a unital C$^*$-subalgebra of $\B(\H)$, for some Hilbert space $\H$. If
$\phi:\osr\rightarrow\B$ is a ucp map for which $\phi(u)$ is unitary, for every $u\in\mathfrak u$, and
if the commutant of $\{\phi(u)\,|\,u\in\mathfrak u\}$ in $\B(\H)$ is $1$-dimensional,
then
$\phi$ is a pure element of $\mathcal C\mathcal P(\osr,\B)$.
\end{lemma}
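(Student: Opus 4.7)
The plan is to exhibit $\phi$ as (the restriction of) a boundary representation and then invoke Proposition \ref{br->p}. The idea is that the unitarity assumption on $\phi(u)$ forces any ucp extension of $\phi$ to $\A$ to be multiplicative on the generators, hence to be a genuine $*$-representation; the $1$-dimensional commutant assumption then makes this representation irreducible, and its uniqueness makes it a boundary representation for $\osr$.

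First I would use Arveson's Hahn--Banach extension theorem to produce a ucp extension $\pi \colon \A \to \B(\H)$ of $\phi$, viewed as a ucp map into $\B(\H) \supseteq \B$. Next, I would invoke the multiplicative domain argument: since each $u \in \mathfrak u$ is unitary and $\pi(u) = \phi(u)$ is unitary, the Kadison--Schwarz inequality forces $u$ (and $u^*$) to lie in the multiplicative domain of $\pi$. Because $\mathfrak u$ generates $\A$ as a C$^*$-algebra, the multiplicative domain equals $\A$, so $\pi$ is a unital $*$-homomorphism. In particular, $\pi$ is the \emph{unique} ucp extension of $\phi$ to $\A$, since any ucp extension must agree with $\pi$ on the generating set $\mathfrak u$ and hence on the $*$-algebra they generate, which is norm-dense in $\A$.

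Then I would check irreducibility. The image $\pi(\A)$ is the C$^*$-algebra generated by $\{\phi(u) \mid u \in \mathfrak u\}$ inside $\B(\H)$, so its commutant coincides with the commutant of $\{\phi(u) \mid u \in \mathfrak u\}$, which by hypothesis is $\mathbb C \cdot I$. Thus $\pi$ is irreducible. Combined with the uniqueness of $\pi$ as a ucp extension of $\phi$, this establishes that $\pi$ is a boundary representation for $\osr$.

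Finally, Proposition \ref{br->p} gives that $\phi = \pi_{\vert\osr}$ is pure in $\mathcal C \mathcal P(\osr, \B(\H))$. To descend to the target $\B$, note that any decomposition $\phi = \vartheta + \omega$ with $\vartheta, \omega \colon \osr \to \B$ completely positive is \emph{a fortiori} a decomposition into elements of $\mathcal C \mathcal P(\osr, \B(\H))$, so purity in the larger cone forces $\vartheta = s\phi$ and $\omega = (1-s)\phi$. I do not expect a serious obstacle: the main point to articulate carefully is that the Kadison--Schwarz inequality, applied to both $u$ and $u^*$, is what puts each generator in the multiplicative domain of every ucp extension, and this is the hinge of the argument.
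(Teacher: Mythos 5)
Your proof is correct and follows essentially the same route as the paper: both arguments hinge on showing that $\phi$ has a unique ucp extension to $\A$ which is a $*$-homomorphism (the paper cites Lemma 9.3 of Kavruk--Paulsen--Todorov--Tomforde and Lemma 5.5 of Kavruk where you give the multiplicative-domain argument directly), deduce irreducibility from the commutant hypothesis, and then conclude via Arveson extension of the summands plus the Radon--Nikod\'ym theorem, which is exactly the content of Proposition \ref{br->p} that you invoke. The only cosmetic differences are that you package the final step as ``boundary representation plus Proposition \ref{br->p}'' rather than rerunning that argument inline, and you correctly make explicit the descent from purity in $\mathcal C\mathcal P(\osr,\B(\H))$ to purity in $\mathcal C\mathcal P(\osr,\B)$.
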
 

\begin{proof} The hypothesis that $\phi(u)$ is unitary, for every $u\in\mathfrak u$, implies that 
$\phi$ admits a unique ucp extension $\pi:\cstar(\osr)\rightarrow\B$ and that the extension
$\pi$ is a homomorphism \cite[Lemma 9.3]{kavruk--paulsen--todorov--tomforde2013}. 
Because the commutant of $\{\phi(u)\,|\,u\in\mathfrak u\}$ in $\B(\H)$ is $1$-dimensional, the C$^*$-algebra
$\pi\left(\cstar(\osr)\right)$ is irreducible; in other words, $\pi$ is an irreducible representation
of $\cstar(\osr)$. 

Suppose that $\vartheta$ and $\omega$ are completely positive linear maps $\osr\rightarrow\B$ such that
$\phi=\vartheta+\omega$. In considering $\vartheta$ and $\omega$ as elements of 
$\mathcal C\mathcal P\left(\osr,\B(\H)\right)$,
Arveson's Extension Theorem yields completely positive extensions $\Theta$ and $\Omega$
of $\vartheta$ and $\omega$ that map $\cstar(\osr)$ into $\B(\H)$. Hence, $\Theta+\Omega$ is one
completely positive extension of $\phi$, considering $\phi$ as an element of $\mathcal C\mathcal P\left(\osr,\B(\H)\right)$.
However,
because $\phi(u)$ is unitary in $\B(\H)$ for every $u\in\mathfrak u$,
$\phi$ has unique ucp extension from $\osr$ to $\cstar(\osr)$ into $\B(\H)$
 \cite[Lemma 5.5]{kavruk2014}.  Furthermore,
because the extension is a homomorphism, the extension is necessarily $\pi$ and the range of the extension is
an irreducible C$^*$-subalgebra of $\B$. Hence, $\pi=\Theta+\Omega$. Because $\pi$ is irreducible, the Radon-Nikod\'ym
Theorem for completely positive linear maps \cite[Theorem 1.4.2]{arveson1969} 
states that $\Theta=s\pi$ and $\Omega=(1-s)\pi$,
for some $s\in[0,1]$. Hence, $\theta=s\phi$ and $\omega=(1-s)\phi$, thereby proving that $\phi$ is pure in $\mathcal C\mathcal P(\osr,\B)$.
\end{proof}

The following result applies in cases where $\mathfrak u$ need not necessarily be a finite set.

\begin{theorem}\label{primitive} If a discrete group $\lgG$ is generated by a set $\mathfrak u$,
and if $\cstar(\lgG)$ is a primitive C$^*$-algebra, then the embedding $\iota_e:\mathcal S(\mathfrak u)\rightarrow\cstar(\lgG)$
is pure.
\end{theorem}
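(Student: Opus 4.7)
The plan is to reduce the claim to Lemma \ref{kav} by composing $\iota_e$ with a faithful irreducible $*$-representation afforded by primitivity of $\cstar(\lgG)$. Primitivity yields a faithful irreducible $*$-representation $\pi: \cstar(\lgG) \to \B(\H)$ for some Hilbert space $\H$; in particular, $\pi$ restricts to a $*$-isomorphism of $\cstar(\lgG)$ onto its image $\B := \pi(\cstar(\lgG))$, which is a unital C$^*$-subalgebra of $\B(\H)$.

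I will next verify the hypotheses of Lemma \ref{kav} for the ucp map $\phi := \pi \circ \iota_e : \oss(\mathfrak{u}) \to \B$. Since $\mathfrak{u}$ generates the group $\lgG$, the image of $\mathfrak{u}$ in $\cstar(\lgG)$ generates $\cstar(\lgG)$ as a C$^*$-algebra, so in particular $\cstar(\oss(\mathfrak{u})) = \cstar(\lgG)$ with $\mathfrak{u} \subset \oss(\mathfrak{u})$ consisting of unitaries. Because $\pi$ is a $*$-homomorphism, each $\phi(u) = \pi(u)$ is a unitary in $\B$. Finally, the irreducibility of $\pi$ gives $\pi(\cstar(\lgG))' = \mathbb{C}\cdot 1_{\B(\H)}$; since $\{\pi(u)\,|\,u \in \mathfrak{u}\}$ generates $\pi(\cstar(\lgG))$ as a C$^*$-algebra, its commutant in $\B(\H)$ coincides with $\pi(\cstar(\lgG))'$ and is thus one-dimensional. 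Lemma \ref{kav} then yields that $\phi$ is pure in $\mathcal C\mathcal P(\oss(\mathfrak{u}), \B)$.

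The remaining step is to transfer purity from $\phi$ back to $\iota_e$ using the $*$-isomorphism $\pi: \cstar(\lgG) \to \B$. Suppose $\vartheta, \omega \in \mathcal C\mathcal P(\oss(\mathfrak{u}), \cstar(\lgG))$ satisfy $\vartheta + \omega = \iota_e$. Then $\pi \circ \vartheta$ and $\pi \circ \omega$ are completely positive maps from $\oss(\mathfrak{u})$ into $\B$ whose sum is $\pi \circ \iota_e = \phi$. Purity of $\phi$ produces $s \in [0,1]$ with $\pi \circ \vartheta = s\phi = \pi \circ (s\iota_e)$, and the injectivity of $\pi$ then yields $\vartheta = s\iota_e$; likewise $\omega = (1-s)\iota_e$. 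The only potential obstacle is arranging the commutant condition required by Lemma \ref{kav}, and this is handled here for free by the irreducibility of $\pi$, so the bulk of the work has already been done in that lemma.
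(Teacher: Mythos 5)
Your proposal is correct and follows essentially the same route as the paper: both use the faithful irreducible representation supplied by primitivity to place $\cstar(\lgG)$ irreducibly inside $\B(\H)$ and then invoke Lemma \ref{kav}. The paper simply makes the identification ``without loss of generality,'' whereas you spell out the transfer of purity back through the injective $*$-homomorphism $\pi$; this is a harmless (indeed slightly more careful) elaboration of the same argument.
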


 \begin{proof} By assumption of the primitivity of $\cstar(\lgG)$, there exists a faithful irreducible representation
of $\cstar(\lgG)$ on a separable Hilbert space $\H$. Thus, without loss of generality, we assume that $\cstar(\lgG)$ is a unital
C$^*$-subalgebra of $\B(\H)$, that $\mathcal S(\mathfrak u)$ is an operator subsystem of
$\cstar(\lgG)$, and that $\iota_e$ is the inclusion map $\iota_e(x)=x$, for $x\in \mathcal S(\mathfrak u)$.
Clearly $\iota_e$ maps the unitary elements of $\mathcal S(\mathfrak u)$ to unitary elements of $\cstar(\lgG)$;
hence, by Lemma \ref{kav}, $\iota_e$ is a pure element of $\mathcal C\mathcal P\left(\mathcal S(\mathfrak u), \cstar(\lgG)\right)$.
\end{proof}

A necessary condition for a full group C$^*$-algebra $\cstar(\lgG)$ 
to be primitive is that $\lgG$ be an infinite conjugacy class (i.c.c.) group. Therefore, the next theorem, which applies only
to finitely-generated groups, requires somewhat less of the group C$^*$-algebra $\cstar(\lgG)$ and answers  
question (Q2) for the operator systems 
$\oss_n$ and $\mbox{\rm NC}(n)$.

\begin{theorem}\label{cstar envelope embedding} Suppose that $\lgG$ is a finitely-generated discrete group and
$\mathfrak u$ is any finite set of generators of $\lgG$. Then the following statements are equivalent:
\begin{enumerate}
\item the canonical embedding $\iota_e:\oss(\mathfrak u)\rightarrow\cstar(\lgG)$ is pure;
\item $\lgG$ is an infinite conjugacy class (i.c.c.) group.
\end{enumerate}
\end{theorem}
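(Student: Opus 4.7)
My plan is to prove the two implications separately. The direct implication (1) $\Rightarrow$ (2) follows quickly from the centre obstruction of Proposition \ref{trivial centre}, while the converse (2) $\Rightarrow$ (1) is the substantive content and reduces, via Theorem \ref{pure ie}, to the assertion that $\cstar(\lgG)$ is a prime C$^*$-algebra.

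For (1) $\Rightarrow$ (2) I would argue by contrapositive. If $\lgG$ is not i.c.c., choose $g \in \lgG \setminus \{e\}$ with finite conjugacy class $C_g$, and form $z = \sum_{x \in C_g} x \in \cstar(\lgG)$. Because the conjugation action of $\lgG$ on itself permutes $C_g$, the element $z$ commutes with every group element and therefore lies in the centre of $\cstar(\lgG)$. The group elements are linearly independent in $\cstar(\lgG)$ and $e \notin C_g$, so $z$ is not a scalar multiple of the identity; hence $z^*z/\|z^*z\|$ exhibits a non-scalar positive central element of norm one. By Proposition \ref{trivial centre}, the unital completely positive map $\iota_e$ cannot be pure.

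For (2) $\Rightarrow$ (1), the crucial step is to establish that if $\lgG$ is finitely-generated and i.c.c., then $\cstar(\lgG)$ is prime. I expect this to be the main obstacle, since it draws on external group-C$^*$-algebra results rather than on the operator-system machinery developed so far in the paper (for $\lgG = \fn$ this specialises to Choi's primitivity theorem). Granting primeness, the identification $\cstare(\oss(\mathfrak u)) = \cstar(\lgG)$ from Theorem \ref{env gp} lets me invoke Theorem \ref{pure ie} and conclude that the embedding $\iota_{\rm ie}: \oss(\mathfrak u) \to \mbox{\rm I}(\oss(\mathfrak u))$ is pure.

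To transfer purity from $\mbox{\rm I}(\oss(\mathfrak u))$ back down to $\cstar(\lgG)$, I would exploit the factorisation $\iota_{\rm ie} = \tilde\iota \circ \iota_e$, where $\tilde\iota: \cstar(\lgG) \hookrightarrow \mbox{\rm I}(\oss(\mathfrak u))$ is the unital complete order embedding arising from (\ref{hamana e}). Given any decomposition $\iota_e = \vartheta + \omega$ in $\mathcal C\mathcal P(\oss(\mathfrak u), \cstar(\lgG))$, composing with $\tilde\iota$ yields $\iota_{\rm ie} = \tilde\iota\vartheta + \tilde\iota\omega$; purity of $\iota_{\rm ie}$ then forces $\tilde\iota\vartheta = s\,\tilde\iota\iota_e$ and $\tilde\iota\omega = (1-s)\,\tilde\iota\iota_e$ for some $s \in [0,1]$, and injectivity of $\tilde\iota$ gives $\vartheta = s\iota_e$ and $\omega = (1-s)\iota_e$. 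An alternative to this transfer argument would be to combine primeness of $\cstar(\lgG)$ with Dixmier's theorem that prime separable C$^*$-algebras are primitive, and then appeal directly to Theorem \ref{primitive}.
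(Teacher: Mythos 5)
Your argument for (1) $\Rightarrow$ (2) is correct and follows the paper's route (Proposition \ref{trivial centre} applied to a nontrivial central element); you simply make explicit the central element $\sum_{x\in C_g}x$ where the paper cites Murphy.

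The direction (2) $\Rightarrow$ (1) has a genuine gap: your reduction rests on the claim that the \emph{full} group C$^*$-algebra $\cstar(\lgG)$ is prime whenever $\lgG$ is finitely generated and i.c.c., and this is false. If $\lgG$ is an i.c.c.\ group with Kazhdan's property (T) (for instance $\mathrm{PSL}_3(\mathbb Z)$), the Kazhdan projection is a nonzero central projection $p\in\cstar(\lgG)$ with $\cstar(\lgG)p\cong\mathbb C$, so $\cstar(\lgG)$ splits as a direct sum of two nonzero ideals and is not prime; indeed, the point of the cited paper of Murphy is that the i.c.c.\ condition is necessary but not sufficient for primitivity of $\cstar(\lgG)$. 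This defect cannot be patched within your strategy: by Theorem \ref{pure ie}, purity of $\iota_{\rm ie}:\oss(\mathfrak u)\rightarrow\mbox{\rm I}\left(\oss(\mathfrak u)\right)$ is \emph{equivalent} to primeness of $\cstare\left(\oss(\mathfrak u)\right)=\cstar(\lgG)$, so for an i.c.c.\ property (T) group the map $\iota_{\rm ie}$ is \emph{not} pure even though the theorem asserts that $\iota_e$ is. Purity into the injective envelope is strictly stronger than purity into the C$^*$-envelope, and you are attempting to derive the weaker statement from a stronger one that fails. (Your transfer mechanism itself---composing a decomposition of $\iota_e$ with the inclusion $\cstar(\lgG)\hookrightarrow\mbox{\rm I}\left(\oss(\mathfrak u)\right)$ and cancelling the injective map---is sound; it is the premise that is wrong.)

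The paper circumvents this by passing to the \emph{reduced} group C$^*$-algebra: for i.c.c.\ $\lgG$, $\cstar_{\rm r}(\lgG)$ is prime by Murphy's result, hence primitive by separability, so it admits a faithful irreducible representation $\pi$ on some $\B(\H)$. Composing with the left regular representation $\rho:\cstar(\lgG)\rightarrow\cstar_{\rm r}(\lgG)$ yields a ucp map $\psi=(\pi\circ\rho)\vert_{\oss(\mathfrak u)}$ carrying the generators to unitaries whose commutant is trivial, so $\psi$ is pure by Lemma \ref{kav}. A decomposition $\iota_e=\vartheta+\omega$ then pushes forward to a decomposition of $\psi$, and the resulting scalar relations are pulled back to $\vartheta=s\iota_e$ using the injectivity of $\pi\circ\rho$ on the finite-dimensional space $\oss(\mathfrak u)$---this is precisely where finite generation is used. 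That detour through $\cstar_{\rm r}(\lgG)$ is the missing idea in your proposal.
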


\begin{proof} 
To prove that (1) implies (2), we show that if $\lgG$ is not an i.c.c. group, then 
the embedding $\iota_e:\oss(\mathfrak u)\rightarrow\cstar(\lgG)$ is not pure. Because full group algebras have trivial centre
only for i.c.c. groups (see, for example, \cite[Proposition 2.1]{murphy2003}), the ucp map 
$\iota_e:\oss(\mathfrak u)\rightarrow\cstar(\lgG)$ can never by pure if $G$ is not an i.c.c. group, by Proposition \ref{trivial centre}.

To prove that (2) implies (1), suppose that $\mathfrak u$ is given by
 $\mathfrak u=\{u_1,\dots,u_n\}$, for some unitaries $u_j\in\cstar(\lgG)$.
Note that the hypothesis that $\lgG$ is an i.c.c group implies that the reduced group C$^*$-algebra
$\cstar_{\rm r}(\lgG)$ is prime \cite[Proposition 2.3]{murphy2003}. Because separable prime C$^*$-algebras are primitive, 
there exists a faithful irreducible
representation $\pi:\cstar_{\rm r}(\lgG)\rightarrow\B(\H)$, for some Hilbert space $\H$. Let $\B=\pi\left(\cstar_{\rm r}(\lgG) \right)$,
an irreducible C$^*$-subalgebra of $\B(\H)$.
If $\rho$ denotes the left regular representation of the
group $\lgG$ (thereby implementing a homomorphism $\cstar(\lgG)\rightarrow\cstar_{\rm r}(\lgG)$), 
then define a unital completely positive linear map 
$\psi:\mathcal S(\mathfrak u)\rightarrow\B(\H)$ by
\[
\psi\left(\alpha 1+ \sum_{j=1}^n\alpha_j u_j\right)=\alpha\pi\left(\rho(1)\right)+\sum_{j=1}^n\alpha_j\pi\left(\rho(u_j)\right).
\]
Because $\psi(u)$ is unitary for every $u\in \mathfrak u$ and the commutant of 
$\{\psi(u)\,|\,u\in\mathfrak u\}$ in $\B(\H)$ is $1$-dimensional, $\psi$ is a pure element of
$\mathcal C\mathcal P\left(\mathcal R ,\B \right)$, by
Lemma \ref{kav}.

If there exists completely positive linear maps
$\vartheta,\omega:\mathcal S(\mathfrak u)\rightarrow\cstar(\lgG)$ such that $\vartheta+\omega=\iota_e$, then 
\[
\left(\pi\circ\rho\right) \circ\vartheta+\left(\pi\circ\rho\right)\circ\omega=\left(\pi\circ\rho\right)\circ\iota_e=\psi.
\]
Hence, the purity of $\psi$ 
in $\mathcal C\mathcal P(\mathcal S(\mathfrak u),\B)$
yields $\left(\pi\circ\rho\right)\circ\vartheta=s\psi=\left(\pi\circ\rho\right)\circ(s\iota_e)$, for some $s\in[0,1]$. 
Because $\psi$ is a linear map that sends a space of dimension $(n+1)$ onto a space of the same dimension, 
$\psi$ is injective; hence,
$\left(\pi\circ\rho\right)\circ\vartheta(x)=\left(\pi\circ\rho\right)\circ(s\iota_e)(x)$, for $x\in \mathcal S(\mathfrak u)$, only if 
$\vartheta(x)=s\iota_e(x)$. That is, $\vartheta=s\iota_e$ and $\omega=(1-s)\iota_e$,
proving that $\iota_e$ is pure.
\end{proof}

\begin{corollary}\label{oss embedding}
The embedding $\iota_e:\oss_n\rightarrow\cstar(\fn)$ is pure for every $n\geq 2$.
\end{corollary}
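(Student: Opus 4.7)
The plan is to deduce the corollary directly from Theorem \ref{cstar envelope embedding} by taking $\lgG=\fn$ and $\mathfrak u=\{u_1,\dots,u_n\}$ to be the standard free generators. With these choices one has $\oss(\mathfrak u)=\oss_n$ by the definition of $\oss_n$, and Theorem \ref{env gp} / the subsequent corollary identify $\cstar(\fn)$ with $\cstare(\oss_n)$. So the only content to verify is that $\fn$ is an infinite conjugacy class (i.c.c.) group for every $n\ge 2$.

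To confirm the i.c.c.~property, I would argue as follows. Fix a nontrivial $g\in\fn$, represented by a reduced word in the free generators $u_1,\dots,u_n$. Choose any generator $u_i$ that does not equal the first letter of $g$ (so that $u_i\cdot g$ remains reduced at the junction) and, if possible, one that also does not equal the inverse of the last letter of $g$; since $n\ge 2$, such a choice is always available because there are at least two generators to pick from. Then, for each integer $k\ge 1$, the conjugates $u_i^k\,g\,u_i^{-k}$ are distinct reduced words whose lengths grow with $k$, so the conjugacy class of $g$ is infinite. Since this holds for every nonidentity $g$, the group $\fn$ is i.c.c.

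With the i.c.c.~property established, Theorem \ref{cstar envelope embedding} applies and yields that the canonical embedding $\iota_e:\oss_n\rightarrow\cstar(\fn)$ is pure, as required. I do not expect any genuine obstacle in this argument: the corollary is essentially immediate from the theorem, and the i.c.c.~verification for free groups is standard. The only mild point to be careful about is ensuring a generator $u_i$ can be chosen compatibly with both ends of the reduced word for $g$, and the hypothesis $n\ge 2$ is precisely what guarantees this flexibility.
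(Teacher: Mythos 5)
Your proposal is correct and takes essentially the same route as the paper, whose proof of this corollary consists of invoking Theorem \ref{cstar envelope embedding} together with the bare assertion that $\fn$ is an i.c.c.\ group for every $n\geq 2$. One small caveat in your i.c.c.\ verification: for $n=2$ a generator avoiding cancellation at \emph{both} ends of $g$ need not exist (e.g.\ $g=u_1^{-1}u_2$ rules out $u_1$ on the left and $u_2$ on the right), but the conjugates $u_i^{k}gu_i^{-k}$ still have lengths growing in $k$ even with cancellation at one junction, so the conclusion stands.
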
 

\begin{proof}
For every $n\geq 2$, $\fn$ is an i.c.c. group. 
\end{proof}

\begin{corollary}\label{nc embedding} 
The embedding 
$\iota_e:\mbox{\rm NC}(n)\rightarrow\cstar(*_1^n\mathbb Z_2)$ is  pure for every $n\ge3$.
\end{corollary}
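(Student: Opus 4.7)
The strategy is to invoke Theorem \ref{cstar envelope embedding} directly. Since each generator $v_j$ of $*_1^n\mathbb Z_2$ is a selfadjoint unitary, the set $\mathfrak u = \{v_1, \dots, v_n\}$ satisfies $\mbox{\rm NC}(n) = \oss(\mathfrak u)$, and by Theorem \ref{env gp} the C$^*$-envelope of $\mbox{\rm NC}(n)$ is $\cstar(*_1^n\mathbb Z_2)$. Theorem \ref{cstar envelope embedding} then reduces the purity of $\iota_e$ to showing that $*_1^n\mathbb Z_2$ is an infinite conjugacy class (i.c.c.) group for $n\geq3$.

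This i.c.c. verification is the only substantive step, and I expect it to be the main obstacle only in the sense that it explains exactly where the hypothesis $n\geq3$ is used. It is a classical fact about free products: among free products of finitely many nontrivial groups, the only one that fails to be i.c.c. is $\mathbb Z_2 * \mathbb Z_2$, the infinite dihedral group. A direct check uses the normal form for elements of a free product. Given a nontrivial $g = v_{i_1} v_{i_2} \cdots v_{i_k}$ written in reduced form (consecutive indices distinct), one constructs, for each $\ell\in\mathbb N$, a reduced word $w_\ell$ of length $\ell$ whose final letter differs from both $i_1$ and $i_k$; this is possible precisely because $n\geq3$, so that at least one index distinct from $i_1$ and $i_k$ is always available. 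Since each $v_j$ is self-inverse, the conjugate $w_\ell g w_\ell^{-1}$ is then reduced of length $2\ell + k$, so distinct $\ell$ give distinct conjugates and the conjugacy class of $g$ is infinite.

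Applying Theorem \ref{cstar envelope embedding} then delivers the purity of $\iota_e:\mbox{\rm NC}(n)\rightarrow\cstar(*_1^n\mathbb Z_2)$. The hypothesis $n\geq3$ is sharp in a transparent way: for $n=2$, the element $v_1 v_2$ in $\mathbb Z_2 * \mathbb Z_2$ has conjugacy class $\{v_1 v_2, (v_1 v_2)^{-1}\}$, so the group fails to be i.c.c., $\cstar(\mathbb Z_2 * \mathbb Z_2)$ has nontrivial centre, and Proposition \ref{trivial centre} (equivalently, the implication $(1)\Rightarrow(2)$ of Theorem \ref{cstar envelope embedding}) forces $\iota_e$ to fail purity.
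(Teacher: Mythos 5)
Your proposal is correct and follows the same overall route as the paper: both reduce the corollary to Theorem \ref{cstar envelope embedding} and hence to the single group-theoretic fact that $*_1^n\mathbb Z_2$ is i.c.c.\ for $n\geq3$. The only divergence is in how that fact is verified. The paper cites \cite[Lemma D.2]{fritz2012} for the containment of $\mathbb F_2$ in $*_1^n\mathbb Z_2$ and deduces the i.c.c.\ property from that, whereas you give a direct normal-form count of conjugates: choosing reduced words $w_\ell$ of length $\ell$ whose last letter avoids both $i_1$ and $i_k$ (possible exactly when $n\geq3$) makes $w_\ell g w_\ell^{-1}$ reduced of length $2\ell+k$, so the conjugacy class of any nontrivial $g$ is infinite. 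Your argument is self-contained and, arguably, logically tighter: containing $\mathbb F_2$ as a subgroup does not by itself imply i.c.c.\ for an arbitrary group (consider $\mathbb F_2\times\mathbb Z_2$), so the paper's one-line deduction implicitly leans on additional structure of free products that your normal-form argument makes explicit. Your closing remark on the sharpness of $n\geq3$, via the two-element conjugacy class of $v_1v_2$ in $\mathbb Z_2*\mathbb Z_2$ and Proposition \ref{trivial centre}, matches the paper's own observation about $\mbox{\rm NC}(2)$ in the paragraph following the corollary.
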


\begin{proof}
If $n\geq3$, then $*_1^n\mathbb Z_2$ contains $\ftwo$ as a subgroup (see, for example, \cite[Lemma D.2]{fritz2012}).
Hence, 
$*_1^n\mathbb Z_2$ is an i.c.c. group.
\end{proof}

With respect to $\oss_1$ and $\mbox{NC}(2)$, because $\cstar(\mathbb Z)$ is abelian and $\cstar(\mathbb Z_2*\mathbb Z_2)$
has nontrivial centre, Proposition \ref{trivial centre} shows that the embeddings of  $\oss_1$ and $\mbox{NC}(2)$ into their
C$^*$-envelopes are not pure.

%%%%%%%%%%%%%%%%%%%%%%%%%%%%%%%%%%%%%%%%%%%
\section{Identity Maps on Some Universal Operator Systems}

In this section we answer question (Q1) for the universal operator systems $\oss_n$ and $\mbox{\rm NC}(n)$, 
making crucial use of boundary representations and 
the fact that the operator system duals of theses operator systems can be represented as operator systems of matrices
(Theorem \ref{duals}).

\begin{theorem}\label{part 1} The identity map $\iota_n:\oss_n\rightarrow\oss_n$ is pure for every $n\geq1$.
\end{theorem}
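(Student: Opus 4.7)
The plan is to reduce the purity of $\iota_n$ to the purity of an auxiliary injective completely positive linear map via the following general principle: if $\phi:\osr\to\ost$ is injective and pure in $\mathcal C\mathcal P(\osr,\ost)$, then the identity $\iota:\osr\to\osr$ is pure in $\mathcal C\mathcal P(\osr)$. This is almost immediate: any decomposition $\iota=\vartheta+\omega$ in $\mathcal C\mathcal P(\osr)$ lifts to $\phi=\phi\circ\vartheta+\phi\circ\omega$ in $\mathcal C\mathcal P(\osr,\ost)$, purity of $\phi$ produces $s\in[0,1]$ with $\phi\circ\vartheta=s\phi$, and injectivity of $\phi$ collapses this to $\vartheta=s\iota$.

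For $n\ge 2$, I would apply this principle directly with $\phi=\iota_e:\oss_n\to\cstare(\oss_n)=\cstar(\fn)$, which is pure by Corollary \ref{oss embedding} and, being a complete order embedding, automatically injective. The main obstacle is the case $n=1$: because $\cstar(\mathbb Z)=C(\mathbb T)$ is abelian, Proposition \ref{trivial centre} forbids $\iota_e:\oss_1\to\cstar(\mathbb Z)$ from being pure, so a substitute for $\iota_e$ is required. I would bypass this by passing to the dual via Proposition \ref{pure dual}: the identity $\iota_1$ is pure if and only if the identity on $\oss_1^d$ is pure, and by Theorem \ref{duals}(1) the dual $\oss_1^d$ is completely order isomorphic to the three-dimensional operator subsystem $\mathcal X_1=\{\alpha I+\beta E_{12}+\gamma E_{21}\,:\,\alpha,\beta,\gamma\in\mathbb C\}\subset\M_2(\mathbb C)$. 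Applying the general principle to the canonical inclusion $\mathcal X_1\hookrightarrow\M_2(\mathbb C)$, the problem reduces to showing that this inclusion is pure.

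To finish, I would show that the identity representation of $\M_2(\mathbb C)$ is a boundary representation for $\mathcal X_1$, so that Proposition \ref{br->p} supplies the required purity. Because $\M_2(\mathbb C)$ is simple, its only irreducible representation up to unitary equivalence is the identity, and the question reduces to verifying that any unital completely positive $\Phi:\M_2(\mathbb C)\to\M_2(\mathbb C)$ with $\Phi_{\vert\mathcal X_1}=\mathrm{id}$ is itself the identity on $\M_2(\mathbb C)$. Such a $\Phi$ is determined by $\Phi(E_{11})$, since $\Phi$ fixes $E_{12}$ and $E_{21}$ and $\Phi(E_{22})=I-\Phi(E_{11})$. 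The one substantive step is a short Choi-matrix calculation: positivity of the Choi matrix of $\Phi$---specifically of its $2\times 2$ principal minor indexed by the first and fourth rows and columns---together with the constraints $0\le\Phi(E_{11})\le I$, forces $\Phi(E_{11})=E_{11}$, completing the argument. This Choi-matrix computation is the only genuine calculation in the entire proof.
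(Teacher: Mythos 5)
Your argument is correct, and it takes a genuinely different route from the paper. The paper proves the theorem uniformly for all $n$ by dualising: it reduces to the identity on $\mathcal X_n\simeq\oss_n^d$, shows each coordinate projection $\pi_k$ is a boundary representation of $\mathcal X_n$ via the norm-attainment formula of Theorem \ref{bndry thm}, and then pieces the pure maps $\pi_k{}_{\vert\mathcal X_n}$ back together using the commuting projections $p_k=v_kv_k^*$, with the constant-diagonal constraint on $\vartheta(1)\in\mathcal X_n$ forcing all the scalars $s_k$ to coincide. You instead isolate a clean factoring principle (an injective pure map $\phi$ pulls purity back to the identity of its domain), which is sound and is in fact the same trick the paper uses at the end of Theorem \ref{cstar envelope embedding} with the injectivity of $\psi=\pi\circ\rho$. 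For $n\ge2$ this lets you quote Corollary \ref{oss embedding} and finish in two lines, avoiding duals entirely; the cost is a case split, since the abelianness of $\cstare(\oss_1)$ blocks that route for $n=1$. Your $n=1$ argument is also correct: the dual reduction to $\mathcal X_1\subset\M_2(\mathbb C)$, the verification via the Choi matrix that the $2\times2$ corner $\left[\begin{smallmatrix} a & 1\\ 1 & 1-c\end{smallmatrix}\right]\geq 0$ forces $\Phi(E_{11})=E_{11}$ (hence the identity representation of $\M_2(\mathbb C)$ is a boundary representation for $\mathcal X_1$), Proposition \ref{br->p}, and the factoring principle all fit together as claimed. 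What the paper's approach buys is uniformity in $n$ and no dependence on the results of Section 4; what yours buys is brevity for $n\geq2$ and a reusable lemma that would be worth stating explicitly.
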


\begin{proof}
By Theorem \ref{pure dual}, it is enough to prove that the dual map 
$\iota_n^d:\oss_n^d\rightarrow\oss_n^d$ (which is again an identity map) is pure.
By Theorem \ref{duals}, therefore, it is enough to verify that the identity map $\mathfrak i_n:\mathcal X_n\rightarrow\mathcal X_n$
is pure in the cone $\mathcal C\mathcal P(\mathcal X_n,\mathcal X_n)$, where
\begin{equation}\label{defn of osu}
\mathcal X_n=\left\{\bigoplus_{k=1}^n \left[\smallmatrix \alpha & \beta_k\\ \gamma_k & 
\alpha\endsmallmatrix\right] \vert \alpha, \beta_k,\gamma_k\in\mathbb C,  \, k= 1,\dots,n\right\}.
\end{equation}

The C$^*$-algebra generated by $\mathcal X_n$ is 
$\cstar(\mathcal X_n)=\displaystyle\bigoplus_{1}^{n}\M_2(\mathbb C)$. The
map $\pi_k:\cstar(\mathcal X_n)\rightarrow\M_2(\mathbb C)$ that projects $x\in\mathcal X_n$ onto its $k$-th direct summand 
is an irreducible representation of $\cstar(\mathcal X_n)$, for each $k=1,\dots,n$. 
We prove below that each irreducible representation $\pi_k$ of $\cstar(\mathcal X_n)$ is
a boundary representation of $\mathcal X_n$.

First note that, up to unitary equivalence, $\pi_1,\dots,\pi_n$ are all of the 
irreducible representations of $\cstar(\mathcal X_n)$; thus, the boundary representations for $\mathcal X_n$ are among these elements.
By Theorem \ref{bndry thm},
the norm of each $x\in\mathcal X_n$
is given by 
\begin{equation}\label{norming}
\|x\|=\max\left\{\|\pi(x)\|\,|\,\pi\mbox{ is a boundary representation of }\mathcal X_n\right\}.
\end{equation}
For each $k=1,\dots,n$ let $x_k\in\mathcal X_n$ be the matrix whose $k$-th direct summand 
is $\left[\begin{array}{cc}0&1\\0&0\end{array}\right]$ and whose other direct summands are the $2\times 2$ zero matrix.
Because $\pi_k(x_k)=\left[\begin{array}{cc}0&1\\0&0\end{array}\right]$ for each $k$ and $\phi_j(x_k)=0$ when $j\not=k$,
the only way that equation (\ref{norming}) can hold for every $k$ is if $\pi_k$ is a boundary representation of $\mathcal X_n$ for
every $k$. 

Now let $\phi_k=\pi_k{}_{\vert\mathcal X_n}$; thus,  $\phi_k$ is a pure element of $\mathcal C\mathcal P(\mathcal X_n,\M_2(\mathbb C))$.
Denote the canonical orthonormal basis vectors of $\mathbb C^2$ by $e_1,e_2$ and the canonical orthonormal 
basis vectors of $\mathbb C^{2n}$ by $f_\ell$, for $\ell=1,\dots 4n$.
Denote by $v_k:\mathbb C^2\rightarrow\mathbb C^{2n}$ 
the isometry that maps $e_1$ to $f_{2k-1}$ and $e_2$ to $f_{2k}$, for $k=1,\dots,n$. Thus, the ucp maps $\phi_k$ are given by
$v_k^*\mathfrak i_n v_k$.

Suppose now that $\vartheta,\omega:\mathcal X_n\rightarrow\mathcal X_n$ are completely positive linear maps for which 
$\mathfrak i_n=\vartheta+\omega$. Then
\[
\phi_{k}=v_{k}^*\mathfrak i_n v_{k} = v_{k}^*\vartheta v_{k}+v_{k}^*\omega v_{k},
\]
where $v_{k}^*\vartheta v_{k},v_{k}^*\omega v_{k}\in
\mathcal C\mathcal P(\mathcal X_n,\M_2(\mathbb C))$.
Observe that the projections $p_k=v_kv_k^*\in\M_2(\mathbb C)$, for $k=1,\dots,n$,
are mutually orthogonal, sum to the identity, and commute with
every element of $\mathcal X_n$. Thus,
\begin{equation}\label{sum eq1}
\vartheta=\sum_{k=1}^n p_k\vartheta p_k \mbox{ and } \omega=\sum_{k=1}^n p_k\vartheta p_k,
\end{equation}
and
\begin{equation}\label{sum eq2}
\mathfrak i_n=\sum_{k=1}^n p_k \mathfrak i_n p_k = \sum_{k=1}^n p_k \vartheta p_k
+ \sum_{k=1}^n p_k \omega p_k=\vartheta+\omega.
\end{equation}
Furthermore,
because each $\phi_{k}$ is a pure element of 
$\mathcal C\mathcal P(\mathcal X_n,\M_2(\mathbb C))$,
there are $s_{k} \in[0,1]$ such that $v_{k}^*\vartheta v_{k}=s_{k}\phi_{k}$
and 
$v_{k}^*\omega_{k}v_{k}=(1-s_k)\phi_{k}$ for $k=1,\dots, n$. Thus, 
\[
p_k\vartheta p_k= v_k(s_{k}\phi_{k})v_k^* = s_k p_k\mathfrak i_n p_k
\]
and
\[
p_k\omega p_k= v_k((1-s_{k})\phi_{k})v_k^* =(1- s_k) p_k\mathfrak i_n p_k
\]
for every $k$.
Therefore, equations (\ref{sum eq1}) become 
\[
\vartheta= \sum_{k=1}^n s_k (p_k\mathfrak i_n p_k) \mbox{ and }  \omega=\sum_{k=1}^n(1- s_k) p_k\mathfrak i_n p_k.
\]
Evaluation of the expression above for $\vartheta$ at the identity $1\in\M_2(\mathbb C)$ yields
 \[
 \vartheta(1 )=\sum_{k=1}^n s_k (p_k\mathfrak i_n(1) p_k)=\bigoplus_1^n  \left[\begin{array}{cc} s_k&0\\0&s_k \end{array}\right].
 \]
 Because $\vartheta(1)\in\mathcal X_n$, the diagonal of $\vartheta(1)$ is constant; hence, $s_1=\dots=s_n=s$ for some
 $s\in[0,1]$.
 That is, $\vartheta=s\mathfrak i_n$ and $\omega=(1-s)\mathfrak i_n$.
 \end{proof}

\begin{theorem}\label{part 2} 
The identity map $\iota_n:\mbox{\rm NC}(n)\rightarrow\mbox{\rm NC}(n)$ is pure for every $n\geq2$.
\end{theorem}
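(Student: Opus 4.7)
The strategy mirrors that of Theorem \ref{part 1}. By Proposition \ref{pure dual} and Theorem \ref{duals}(2), it suffices to prove that the identity map $\mathfrak{i}:\mathcal{Y}_n\to\mathcal{Y}_n$ is pure in $\mathcal C\mathcal P(\mathcal{Y}_n,\mathcal{Y}_n)$.

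The first task is to understand $\cstar(\mathcal{Y}_n)$. Because each block $\left[\begin{array}{cc}\alpha&\beta_k\\\beta_k&\alpha\end{array}\right]$ is diagonalised by the Hadamard unitary $U=\tfrac{1}{\sqrt{2}}\left[\begin{array}{cc}1&1\\1&-1\end{array}\right]$, conjugation by $\bigoplus_{1}^{n} U$ realises $\mathcal{Y}_n$ as a subsystem of the diagonal matrices in $\M_{2n}(\mathbb{C})$. Hence $\cstar(\mathcal{Y}_n)\cong\mathbb{C}^{2n}$, and its irreducible representations are, up to unitary equivalence, the $2n$ characters $\chi_k^{\pm}(a)=\alpha\pm\beta_k$, for $a=\bigoplus_k\left[\begin{array}{cc}\alpha&\beta_k\\\beta_k&\alpha\end{array}\right]$. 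I would then check directly that each $\chi_k^{\pm}$ is a boundary representation of $\mathcal{Y}_n$: a state on $\mathbb{C}^{2n}$ restricting to $\chi_k^{\pm}|_{\mathcal{Y}_n}$ corresponds to a probability vector, and the linear constraints coming from the order unit and from the generators $v_j\in\mathcal{Y}_n$ force it to be the point mass at the $(k,\pm)$ coordinate. Proposition \ref{br->p} then ensures that $\phi_k^{\pm}:=\chi_k^{\pm}|_{\mathcal{Y}_n}$ is pure in $\mathcal C\mathcal P(\mathcal{Y}_n,\mathbb{C})$.

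With $f_1,\dots,f_{2n}$ the standard basis of $\mathbb{C}^{2n}$, set $w_k^{\pm}=\tfrac{1}{\sqrt{2}}(f_{2k-1}\pm f_{2k})$ and $P_k^{\pm}=w_k^{\pm}(w_k^{\pm})^*$. Because $aw_k^{\pm}=(\alpha\pm\beta_k)w_k^{\pm}$, the $2n$ rank-one projections $P_k^{\pm}$ are mutually orthogonal, sum to the identity, and commute with every element of $\mathcal{Y}_n$. Given completely positive maps $\vartheta,\omega:\mathcal{Y}_n\to\mathcal{Y}_n$ with $\vartheta+\omega=\mathfrak{i}$, the identities $P_k^{\pm}\vartheta(a)P_k^{\pm}=\chi_k^{\pm}(\vartheta(a))P_k^{\pm}$ show that the positive linear functionals $\tilde{\vartheta}_k^{\pm}:=\chi_k^{\pm}\circ\vartheta$ and $\tilde{\omega}_k^{\pm}:=\chi_k^{\pm}\circ\omega$ satisfy $\phi_k^{\pm}=\tilde{\vartheta}_k^{\pm}+\tilde{\omega}_k^{\pm}$. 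Purity of each $\phi_k^{\pm}$ then furnishes scalars $s_k^{\pm}\in[0,1]$ with $\tilde{\vartheta}_k^{\pm}=s_k^{\pm}\phi_k^{\pm}$.

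The step I expect to be the main obstacle is forcing all $2n$ scalars $s_k^{\pm}$ to coincide. The reassembled map $\vartheta(a)=\sum_{k,\pm}s_k^{\pm}(\alpha\pm\beta_k)P_k^{\pm}$ must lie in $\mathcal{Y}_n$; passed through the diagonalising change of basis, this becomes the requirement that $(s_k^++s_k^-)\alpha+(s_k^+-s_k^-)\beta_k$ be independent of $k$ for every choice of $(\alpha,\beta_1,\dots,\beta_n)$. Specialising to $a$ with $\alpha=0$ and only $\beta_{k_0}$ nonzero forces $s_{k_0}^+=s_{k_0}^-$ for each $k_0$, and then setting $\alpha=1$ with all $\beta_k=0$ forces this common value $s_k:=s_k^{\pm}$ to be independent of $k$. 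Writing $s$ for this shared scalar gives $\vartheta=s\mathfrak{i}$ and $\omega=(1-s)\mathfrak{i}$, which establishes the purity of $\mathfrak{i}$ and hence of $\iota_n$.
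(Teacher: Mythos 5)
Your proposal is correct and follows essentially the same route as the paper: dualise via Proposition \ref{pure dual} and Theorem \ref{duals}, simultaneously diagonalise $\mathcal Y_n$ inside the abelian C$^*$-algebra $\mathbb C^{2n}$, verify that the $2n$ characters $\chi_k^{\pm}$ are boundary representations (so their restrictions are pure states), and then use the constraint that $\vartheta$ takes values in $\mathcal Y_n$ to force all the scalars $s_k^{\pm}$ to coincide, exactly as in the paper's system of equations $(s_k^++s_k^-)\alpha+(s_k^+-s_k^-)\beta_k=2\lambda$. The only cosmetic differences are that the paper checks the unique extension property with explicit test elements such as $(1,0,1,0,\dots,1,0)$ rather than your general linear-constraint argument, and your reference to ``the generators $v_j\in\mathcal Y_n$'' should read the spanning matrices of $\mathcal Y_n$, since the $v_j$ live in $\mbox{\rm NC}(n)$ rather than in its dual.
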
 

\begin{proof} By Theorem \ref{pure dual}, it is sufficient to confirm that the dual map 
$\iota_n^d:\mbox{\rm NC}(n)^d\rightarrow\mbox{\rm NC}(n)^d$ (which is again an identity map) is pure.
By Theorem \ref{duals}, therefore, it is enough to verify that the identity map $\mathfrak j_n:\mathcal Y_n\rightarrow\mathcal Y_n$
is pure in the cone $\mathcal C\mathcal P(\mathcal Y_n,\mathcal Y_n)$, where
\begin{equation}\label{defn of nc dual}
\mathcal Y_n=\left\{\bigoplus_{k=1}^n \left[\smallmatrix \alpha & \beta_k\\ \beta_k & 
\alpha\endsmallmatrix\right] \vert \alpha, \beta_k\in\mathbb C,  \, k= 1,\dots,n\right\}.
\end{equation}
Note that an element $y=\displaystyle\bigoplus_{k=1}^n \left[\smallmatrix \alpha & \beta_k\\ \beta_k & 
\alpha\endsmallmatrix\right]$ is positive if and only $\beta_k\in\mathbb R$ and $\alpha\geq|\beta_k|$,
for every $k=1,\dots,n$.

The C$^*$-algebra $\mathcal Y_n$ generates
is abelian. Hence, the matrices in $\mathcal Y_n$ are commuting normal matrices and, therefore, admit a common spectral decomposition. That is, 
there is a unitary $u\in\M_{2n}(\mathbb C)$ such that
\[
u^*\left(\bigoplus_{k=1}^n \left[\smallmatrix \alpha & \beta_k\\ \beta_k & \alpha\endsmallmatrix\right]\right)u = 
\left[\begin{array}{ccccc}
\alpha+\beta_1 &&&& \\ 
&\alpha-\beta_1&&& \\
&&\ddots && \\
&&&\alpha+\beta_{n}&\\
&&&&\alpha-\beta_{n}
\end{array}
\right].
\]
Hence, $\mathcal Y_n$ is unitarily equivalent to the operator subsystem $\mathcal Z_n$ of the unital abelian C$^*$-algebra
$\ell^\infty(2n)$ given by
\[
\mathcal Z_n=\left\{ (\alpha+\beta_1,\alpha-\beta_1,\dots,\alpha+\beta_n,\alpha-\beta_n)\,\vert\,\alpha,\beta_k\in\mathbb C,\,k=1,\dots,n\right\}.
\]
Observe that $\ell^\infty(2n)$ is the unital C$^*$-algebra generated by $\mathcal Z_n$.
Let $\mathfrak z_n:\mathcal Z_n\rightarrow\mathcal Z_n$ denote the identity map on $\mathcal Z_n$. We aim to prove that $\mathfrak z_n$ is pure.

For each $k\in\{1,\dots,2n\}$ let $\pi_k:\ell^\infty(2n)\rightarrow\mathbb C$ denote the projection map onto the $k$-th coordinate. Thus, each
$\pi_k$ is an irreducible representation of $\ell^\infty(2n)$ on $\mathbb C$, and $\mathfrak R= \{\pi_1,\dots,\pi_{2n}\}$ is the set of all irreducible representations
of  $\ell^\infty(2n)$. If, for a fixed $k$, the map $\pi_k$ can be shown to be a boundary representation for $\mathcal Z_n$, then it will follow that
 $\varphi_k=\pi_k{}_{\vert\mathcal Z_n}$, the projection of $\mathcal Z_n$ onto the $k$-th coordinate, is a pure state on $\mathcal Z_n$.
 
 To this end, let $\Phi_k:\ell^\infty(2n)\rightarrow\mathbb C$ be any state extending $\varphi_k$. Because the states on $\ell^\infty(2n)$ are convex combinations
 of extremal states---which in this case are the irreducible representations $\pi_1,\dots,\pi_{2n}$---we deduce that 
 \begin{equation}\label{cnvx hull}
 \Phi_k=\sum_{j=1}^{2n}\lambda_j\pi_j,
 \end{equation}
 for some $\lambda_1,\dots,\lambda_{2n}\in[0,1]$ for which $\displaystyle\sum_{j=1}^{2n}\lambda_j=1$.
 The representation in equation (\ref{cnvx hull}) above depends on the choice of $k$ (that is, the convex coefficients $\lambda_j$ depend on the
 choice of $k$). 
 
 For notational simplicity, we consider the case of $k=1$ first. Thus, we aim to show in equation (\ref{cnvx hull})---assuming $k=1$---that $\lambda_1=1$
 and $\lambda_j=0$ for all $j\not=1$. To this end, consider the element $x\in\mathcal Z_n$ that is given by
 \[
 x=(1,0,1,0,\dots,1,0).
 \]
(We achieve $x$ by selecting $\alpha=\beta_j=\frac{1}{2}$ for every $j=1,\dots,n$.)
Thus, equation  (\ref{cnvx hull}) yields
\[
1=\varphi_1(x)=\Phi_1(x)=\sum_{j=1}^{2n}\lambda_j\pi_j(x)=\sum_{\ell=1}^{n}\lambda_{2\ell-1}.
\]
Thus, from $1=\displaystyle\sum_{j=1}^{2n}\lambda_j=\displaystyle\sum_{\ell=1}^{n}\lambda_{2\ell-1}$ we deduce that $\lambda_{2\ell}=0$ for
every $\ell=1,\dots,n$. Now using 
\[
y=(1,0,0,1,0,1,\dots,0,1)\in\mathcal Z_n,
\]
which is achieved by using $\alpha=\beta_1=\frac{1}{2}$ and $\beta_j=\frac{-1}{2}$ for $j=2,\dots,n$, we obtain
\[
1=\varphi_1(y)=\Phi_1(y)=\displaystyle\sum_{j=1}^{2n}\lambda_{j}\pi_{j}(y)=\lambda_1+\displaystyle\sum_{\ell=1}^{n}\lambda_{2\ell}=\lambda_1+0=\lambda_1.
\]
Hence, $\lambda_j=0$ for all $j\not=1$ and
$\Phi_1=\pi_1$. The arguments for every other $\Phi_k$ are handled similarly, juxtaposed according to whether $k$ is even or odd. 
Thus, for each $k=1,\dots,2n$, $\pi_k{}_{\vert\mathcal Z_n}$ has the unique extension property, and so 
 $\pi_k$ is a boundary representation for $\mathcal Z_n$. Hence, $\varphi_k=\pi_k{}_{\vert\mathcal Z_n}$ is a pure state on $\mathcal Z_n$.
 
 Now suppose that there are completely positive linear maps $\vartheta,\omega:\mathcal Z_n\rightarrow\mathcal Z_n$ for which
 $\mathfrak z_n=\vartheta+\omega$. For each $k$, let $\varphi_k=\pi_k\circ\mathfrak z_n$, $\vartheta_k=\pi_k\circ\vartheta$, and $\omega_k=\pi_k\circ\omega$, 
 which project onto the $k$-th coordinates of $x$, $\vartheta(x)$, and $\omega(x)$, respectively, for every $x\in\mathcal Z_n$. The state $\varphi_k$ was shown
in the previous paragraph to be pure, and so $\varphi_k=\vartheta_k+\omega_k$ implies that there $\vartheta_k$ and $\omega_k$ are nonnegative scalar multiples of
$\varphi_k$.

Select 
\[
x=\left(\alpha+\beta_1,\alpha-\beta_1,\dots,\alpha+\beta_n,\alpha-\beta_n \right) \in\mathcal Z_n.
\]
By the previous paragraph, each $\vartheta_k$ is nonnegative scalar multiple of
$\varphi_k$. Hence, there are nonnegative $s_1,\dots,s_n,t_1,\dots,t_n\in\mathbb R$ such that
\[
\vartheta(x)=\left(s_1(\alpha+\beta_1),t_1(\alpha-\beta_1),\dots,s_n(\alpha+\beta_n),t_n(\alpha-\beta_n) \right).
\]
However, since $\vartheta(x)\in\mathcal Z_n$, there exist
$\lambda,\mu_1\dots,\mu_n\in\mathbb C$ such that
\[
\vartheta(x)=\left(\lambda+\mu_1,\lambda-\mu_1,\dots,\lambda+\mu_n,\lambda-\mu_n \right).
\]
Hence, we have the following system of $2n$ equations:
\[
\begin{array}{rcl}
s_1(\alpha+\beta_1) &=& \lambda+\mu_1 \\
t_1(\alpha-\beta_1) &=& \lambda-\mu_1 \\
s_2(\alpha+\beta_2) &=& \lambda+\mu_2 \\
t_2(\alpha-\beta_2) &=& \lambda-\mu_2 \\
\vdots \qquad&\vdots&\quad\vdots \\
s_n(\alpha+\beta_n) &=& \lambda+\mu_n \\
t_n(\alpha-\beta_n) &=& \lambda-\mu_n \,.
\end{array}
\]
Viewing the equations above as $n$ pairs of equations, adding the first two equations in each pair leads to:
\begin{equation}\label{eqs}
(s_k+t_k)\alpha + (s_k-t_k)\beta_k=2\lambda,\mbox{ for every }k=1,\dots,n.
\end{equation}
That is, 
\[
(s_k+t_k)\alpha + (s_k-t_k)\beta_k= (s_j+t_j)\alpha + (s_j-t_j)\beta_j, \mbox{ for every }k,j=1,\dots,n.
\]
These equations above hold regardless of the choice of $\alpha,\beta_1,\dots,\beta_n$, and so it must be that $s_k=t_k$ for each $k$.
Therefore, the $n$ equations given in (\ref{eqs}) simplify to 
\begin{equation}\label{eqs2}
2s_k \alpha =2\lambda,\mbox{ for every }k=1,\dots,n.
\end{equation}
Hence, $s_1=\dots=s_n$. If $s\in\mathbb R$ denotes this nonnegative real number, then $\vartheta(x)=sx$, for every $x\in\mathcal Z_n$.
Thus, $\vartheta=s\mathfrak z_n$ and $\omega=(1-s)\mathfrak z_n$. 

Hence, $\mathfrak z_n$ is pure, implying that $\mathfrak j_n$ and $\iota_n^d$ are pure.
 \end{proof}

We now give another example among many known examples showing 
that the restriction of a pure map to an operator subsystem need not be pure. 
What makes the example here of interest is that the restriction is made to an operator subsystem
of just $1$ dimension lower.

\begin{proposition}\label{restriction proof}
There exist operator systems $\osr$, $\ost$, and $\osq$, and a unital completely positive linear map
$\Phi:\ost\rightarrow\osq$ such that
\begin{enumerate}
\item $\osr$ is an operator subsystem of $\ost$,
\item the linear dimension of the vector space $\ost/\osr$ is $1$,
\item the map $\Phi$ is pure in $\mathcal C\mathcal P(\ost,\osq)$, and
\item the restriction of $\Phi$ to $\osr$ is not pure in $\mathcal C\mathcal P(\osr,\osq)$.
\end{enumerate}
\end{proposition}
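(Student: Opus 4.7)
The plan is to take $\ost = \mbox{\rm NC}(3)$, $\osr = \mbox{\rm NC}(2)$ realised as the operator subsystem $\mbox{\rm Span}\{1, v_1, v_2\}$ of $\mbox{\rm NC}(3) = \mbox{\rm Span}\{1, v_1, v_2, v_3\}$, $\osq = \cstar(*_1^3 \mathbb{Z}_2)$, and $\Phi = \iota_e : \mbox{\rm NC}(3) \rightarrow \cstare(\mbox{\rm NC}(3))$ the canonical embedding of $\mbox{\rm NC}(3)$ into its $\cstar$-envelope. Conditions (1) and (2) are immediate from the description of $\mbox{\rm NC}(n)$, since $\dim \mbox{\rm NC}(n) = n+1$ makes the quotient $\ost/\osr$ one-dimensional; condition (3) is precisely the content of Corollary \ref{nc embedding} applied with $n=3$.

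The substance of the argument is condition (4). The idea is to reuse the compression technique of Proposition \ref{trivial centre}: if one can locate a nonscalar positive element $a \in \osq$ with $0 \leq a \leq 1$ that \emph{centralises} the image $\iota_e(\osr) = \mbox{\rm Span}\{1, v_1, v_2\}$, then the maps
\[
\vartheta(x) = a^{1/2}\, \iota_e(x)\, a^{1/2}, \qquad \omega(x) = (1-a)^{1/2}\, \iota_e(x)\, (1-a)^{1/2},
\]
defined for $x \in \mbox{\rm NC}(2)$, are completely positive (compositions of $\iota_e$ with compressions by positive contractions), sum to $\iota_e|_\osr$ (because $a$ commutes with $\iota_e(x)$, so the two terms collapse to $a\, \iota_e(x)$ and $(1-a)\, \iota_e(x)$), and yet $\vartheta(1) = a$ is nonscalar, so $\vartheta$ cannot equal $s\, \iota_e|_\osr$ for any $s$. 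Crucially, $a$ need \emph{not} lie in the centre of all of $\osq$---which is in any event trivial, since $*_1^3 \mathbb{Z}_2$ is i.c.c.---it only has to centralise $\{v_1, v_2\}$, and this relative centraliser is genuinely larger than $\mathbb{C}1$.

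To exhibit such an $a$, the plan is to set $w = v_1 v_2$ and verify by direct calculation that $v_j\, w\, v_j = w^{-1}$ for $j = 1, 2$, so that $w + w^*$ commutes with $v_1$ and $v_2$ and is a selfadjoint element with spectrum in $[-2,2]$. The choice $a = \tfrac{1}{2} + \tfrac{1}{4}(w + w^*)$ then yields a positive contraction centralising $\iota_e(\osr)$. The only step requiring any care is confirming that $a$ is nonscalar, which reduces to showing that $w + w^*$ is nonscalar in $\cstar(*_1^3 \mathbb{Z}_2)$: this holds because the canonical map $\mathbb{C}[*_1^3 \mathbb{Z}_2] \rightarrow \cstar(*_1^3 \mathbb{Z}_2)$ is injective and $w$ has infinite order in $*_1^3 \mathbb{Z}_2$, so $w + w^{-1}$ is a nonscalar element of the group algebra. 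This will establish (4) and complete the construction.
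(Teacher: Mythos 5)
Your construction is correct, but it is genuinely different from the one in the paper. The paper works entirely inside $2\times 2$ matrices: it takes $\ost=\osq=\mathcal X_1$ (the $2\times 2$ matrices with constant diagonal), $\osr$ the symmetric such matrices, and $\Phi$ the identity map, whose purity is extracted from the boundary-representation argument in the proof of Theorem \ref{part 1}; the failure of purity on $\osr$ is then exhibited by two explicit matrix formulas, which are in effect compressions to the spectral projections $\frac{1}{2}\left[\begin{smallmatrix}1&\pm1\\\pm1&1\end{smallmatrix}\right]$. You instead take $\ost=\mbox{\rm NC}(3)$, $\osr=\mbox{\rm Span}\{1,v_1,v_2\}$, $\osq=\cstar(*_1^3\mathbb Z_2)$ and $\Phi=\iota_e$, importing purity from Corollary \ref{nc embedding}, and you defeat purity of the restriction by sharpening Proposition \ref{trivial centre} to a \emph{relative} commutant argument: the element $a=\frac12+\frac14(v_1v_2+v_2v_1)$ is a nonscalar positive contraction commuting with $v_1,v_2$ (your identity $v_jwv_j=w^{-1}$ and the nonscalarity via linear independence of group elements both check out), so $x\mapsto a^{1/2}xa^{1/2}$ and $x\mapsto(1-a)^{1/2}x(1-a)^{1/2}$ give the required nontrivial decomposition of $\iota_{e}|_{\osr}$. (Whether $\osr$ is completely order isomorphic to the abstract $\mbox{\rm NC}(2)$ is immaterial to the statement, though it is true via the retraction $*_1^3\mathbb Z_2\to\mathbb Z_2*\mathbb Z_2$.) The trade-off: the paper's example is finite-dimensional, self-contained, and completely computable, whereas yours leans on the heavier machinery of Section 4 but isolates the conceptual mechanism more cleanly --- purity of a ucp map is destroyed as soon as the image of the domain has a nontrivial relative commutant in the codomain, a strictly weaker hypothesis than the nontrivial centre used in Proposition \ref{trivial centre}. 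Underneath, both proofs exploit exactly this phenomenon.
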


\begin{proof} Let $\ost=\osq=\mathcal X_1$; that is,
\[
\ost=\osq=\left\{\left[\begin{array}{cc} \alpha & \beta_1\\ \beta_2 & \alpha\end{array}\right]\,\vert\,
\alpha,\beta_1,\beta_2\in\mathbb C\right\}.
\]
Let
\[
\osr=\left\{\left[\begin{array}{cc} \alpha & \beta \\ \beta  & \alpha\end{array}\right]\,\vert\,
\alpha,\beta \in\mathbb C\right\},
\]
which is an operator subsystem of $\ost$ of co-dimension $1$.

Let $\Phi:\ost\rightarrow\osq$ be the identity map. By (the proof of) Theorem \ref{part 1}, $\Phi$ is a pure element of
$\mathcal C\mathcal P(\ost,\osq)$.
The completely positive linear maps $\vartheta,\omega:\ost\rightarrow\osq$ given by
\[
\vartheta\left(\left[\begin{array}{cc} \alpha & \beta_1\\ \beta_2 & \alpha\end{array}\right] \right)
=\left[\begin{array}{cc} \frac{\alpha}{2} + \frac{\beta_1+\beta_2}{4} &\frac{\alpha}{2} + \frac{\beta_1+\beta_2}{4}\\ 
\frac{\alpha}{2} + \frac{\beta_1+\beta_2}{4} & \frac{\alpha}{2} + \frac{\beta_1+\beta_2}{4}\end{array}\right]
\]
and 
\[
\omega\left(\left[\begin{array}{cc} \alpha & \beta_1\\ \beta_2 & \alpha\end{array}\right] \right)
=
\left[\begin{array}{cc} \frac{\alpha}{2} - \left(\frac{\beta_1+\beta_2}{4}\right) & \frac{-\alpha}{2} + \frac{\beta_1+\beta_2}{4} \\ 
\frac{-\alpha}{2} + \frac{\beta_1+\beta_2}{4}   &\frac{\alpha}{2} - \left(\frac{\beta_1+\beta_2}{4}\right)\end{array}\right]
\]
satisfy, when $\beta_1=\beta_2=\beta$, 
\[
\vartheta\left( \left[\begin{array}{cc} \alpha & \beta \\ \beta  & \alpha\end{array}\right]\right) 
+
\omega\left( \left[\begin{array}{cc} \alpha & \beta \\ \beta  & \alpha\end{array}\right]\right)
=
\left[\begin{array}{cc} \alpha & \beta \\ \beta  & \alpha\end{array}\right].
\]
That is, $\vartheta(x)+\omega(x)=\Phi_{\vert\osr}(x)$, for every $x\in \osr$. Hence, as neither $\vartheta$ nor $\omega$ is a scalar multiple of $\phi_{\vert\osr}$,
the restriction of $\Phi$ to $\osr$ is not pure in the cone
$\mathcal C\mathcal P(\osr,\osq)$.
\end{proof}

If $\osr$ is any operator system, then $\osr\omin\mathcal X_n$ is canonically an operator subsystem of $\osr\omin\M_n(\mathbb C)=\M_n(\osr)$; thus,
 we may describe $\osr\omin\mathcal X_n$ as the operator system
 \[
 \osr\omin\mathcal X_n=\left\{ \bigoplus_{k=1}^n\left[\begin{array}{cc} r & a_k\\ b_k & r\end{array}\right] \,\vert\, r,a_k,b_k\in\osr
 \right\}.
 \]
 Likewise,
  \[
 \osr\omin\mathcal Y_n=\left\{ \bigoplus_{k=1}^n\left[\begin{array}{cc} r & c_k\\ c_k & r\end{array}\right] \,\vert\, r,c_k\in\osr
 \right\}.
 \]

As a final consequence of Theorems \ref{part 1} and \ref{part 2} and Corollary \ref{max ent pure}, we have the following application.

\begin{proposition} If $\{u_1,\dots,u_n\}$ is the generating set of unitaries for $\cstar(\fn)$, for $n\geq 1$, and 
if $\{v_1,\dots,v_m\}$ is the generating set of selfadjoint unitaries for 
$\cstar(*{}_1^m\mathbb Z_2)$, where $m\geq2$,
then
\[
\xi=\bigoplus_{k=1}^n\left[\begin{array}{cc} 1 & u_k\\ u_k^* & 1\end{array}\right] 
\]
is a pure element of the cone $\left(\oss_n\omin\mathcal X_n\right)_+$, and 
\[
\xi'=\bigoplus_{j=1}^m\left[\begin{array}{cc} 1 & v_j\\ v_j & 1\end{array}\right] 
\] 
is a pure element of the cone $\left(\mbox{\rm NC}(m)\omin\mathcal Y_m\right)_+$.
\end{proposition}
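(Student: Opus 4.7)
The plan is to deduce both statements directly from Theorems \ref{part 1} and \ref{part 2}, Corollary \ref{max ent pure}, and the explicit dual realizations furnished by Theorem \ref{duals}. The substantive work is purely combinatorial: under the canonical isomorphism $\Gamma:\mathcal L(\osr)\to\osr\otimes\osr^d$ from the excerpt, I must verify that the maximally entangled element attached to the identity map on $\oss_n$ (respectively $\mbox{\rm NC}(m)$) is sent by the appropriate dual-system isomorphism to $\xi$ in $\oss_n\omin\mathcal X_n$ (respectively to $\xi'$ in $\mbox{\rm NC}(m)\omin\mathcal Y_m$).

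For the first assertion, Theorem \ref{part 1} gives that $\iota_n:\oss_n\to\oss_n$ is pure, so Corollary \ref{max ent pure} immediately yields purity of $\Gamma(\iota_n)$ in $\left(\oss_n\omin\oss_n^d\right)_+$. To identify $\Gamma(\iota_n)$ with $\xi$, I would work in the basis $\{1,u_1,u_1^*,\dots,u_n,u_n^*\}$ of $\oss_n$, choose $\delta_0$ to be a faithful state (e.g.\ the restriction of the canonical tracial state on $\cstar(\fn)$), which then serves as the Archimedean order unit $e_{\oss_n^d}$, and let $\{\delta_1,\delta_1',\dots,\delta_n,\delta_n'\}$ be the corresponding dual basis of $\ker\delta_0$. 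Under the order isomorphism $\oss_n^d\simeq\mathcal X_n$ of Theorem \ref{duals}, $\delta_0$ is sent to the order unit $\bigoplus_k I_2\in\mathcal X_n$, while $\delta_k$ and $\delta_k'$ correspond, respectively, to the matrix units in the $(1,2)$ and $(2,1)$ entries of the $k$-th direct summand. Expanding
\[
\Gamma(\iota_n)=1\otimes\delta_0+\sum_{k=1}^n\left(u_k\otimes\delta_k+u_k^*\otimes\delta_k'\right)
\]
inside $\oss_n\omin\mathcal X_n$ written in the block-diagonal form displayed immediately before the proposition then recovers $\xi$ term by term.

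The argument for $\xi'$ runs in parallel: Theorem \ref{part 2} and Corollary \ref{max ent pure} give purity of $\Gamma(\iota_m)$ in $\left(\mbox{\rm NC}(m)\omin\mbox{\rm NC}(m)^d\right)_+$; using the basis $\{1,v_1,\dots,v_m\}$ of $\mbox{\rm NC}(m)$ and the symmetric shape of $\mathcal Y_m$ from Theorem \ref{duals}, each dual basis element $\delta_j$ corresponds to the symmetric element of $\mathcal Y_m$ with $1$ in both off-diagonal entries of the $j$-th summand and $0$ elsewhere, so the analogous expansion of $\Gamma(\iota_m)$ in $\mbox{\rm NC}(m)\omin\mathcal Y_m$ produces $\xi'$. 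The only nonautomatic step is this bookkeeping for Theorem \ref{duals}: one needs to extract from the constructions in \cite[Theorem 4.5]{farenick--paulsen2012} and \cite[Proposition 6.1]{farenick--kavruk--paulsen--todorov2014} the precise correspondence between the chosen dual bases and the stated matrix units of $\mathcal X_n$ and $\mathcal Y_m$. Once this identification is pinned down, the purity of $\xi$ and $\xi'$ is an immediate application of Corollary \ref{max ent pure}.
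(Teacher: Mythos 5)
Your argument is correct and follows essentially the same route as the paper: both identify the dual basis of $\{1,u_1,u_1^*,\dots\}$ (resp.\ $\{1,v_1,\dots,v_m\}$) with the matrix units $e_{12}^{[k]},e_{21}^{[k]}$ of $\mathcal X_n$ (resp.\ $e_{12}^{[j]}+e_{21}^{[j]}$ of $\mathcal Y_m$) via Theorem \ref{duals}, recognise $\xi$ and $\xi'$ as the maximally entangled elements $\Gamma(\iota)$, and then invoke Corollary \ref{max ent pure} together with Theorems \ref{part 1} and \ref{part 2}. The only difference is that you spell out the expansion of $\Gamma(\iota)$ and the choice of the faithful state $\delta_0$ explicitly, whereas the paper delegates this bookkeeping to the cited references.
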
 

\begin{proof} With respect to the canonical basis 
$\{1,u_1,u_1^*,\dots,u_n,u_n^*\}$ of $\oss_n$ and the canonical basis $\{1,v_1,v_2,\dots,v_m\}$ of $\mbox{\rm NC}(m)$,
the matrix representation (see \cite[Proposition 6.1]{farenick--kavruk--paulsen--todorov2014},\cite[Theorem 4.5]{farenick--paulsen2012})
of the dual basis elements of $\oss_n^d$ and $\mbox{\rm NC}(m)^d$, where $n\geq1$ and $m\geq2$, 
are given by 
\[
\{1, e_{12}^{[k]}, e_{21}^{[k]}\,|\,k=1,\dots,n\}\mbox{ and } \{1, e_{12}^{[j]}+ e_{21}^{[j]}\,|\,j=2,\dots,m\},
\]
where $e_{ij}^{[\ell]}$ denotes the matrix formed by a direct sum of $2\times 2$ matrices in which the $\ell$-th summand is the $2\times2$ matrix unit $e_{ij}$
and every other direct summand is $0$. Thus, $\xi$ and $\xi'$ represent the maximally entangled elements of $\oss_n\otimes\oss_n^d$ and $\mbox{\rm NC}(m)\otimes \mbox{\rm NC}(m)^d$,
respectively. By Corollary \ref{max ent pure}, the purity of $\xi$ and $\xi'$ follows from the purity of the identity maps on $\oss_n$, for $n\geq1$, and $\mbox{\rm NC}(m)$, for $m\geq 2$.
\end{proof}
 
%%%%%%%%%%%%%%%%%%%%%%%%%%%%%%%
 \section*{Acknowledgement}
 
 We wish to thank Rapha\"el Clou\^atre for pointing out some errors and gaps
 in our original versions of Theorems \ref{part 1} and \ref{part 2}, and for suggesting the elegant use of references \cite{arveson2008,davidson--kennedy2015}
 in the proof of Theorem \ref{part 1}. %Some of the results of this paper are presented in the second author's University of Regina doctoral thesis (2020).
 
%%%%%%%%%%%%%%%%%%%%%%%
%\bibliographystyle{amsplain}
%\bibliography{doug-refs}
%%%%%%%%%%%%%%%%%%%%%%%%

\end{document}